\documentclass[a4paper, 11pt]{article}

\usepackage[utf8]{inputenc}
\usepackage[T1]{fontenc}
\usepackage{lmodern}     
\usepackage[english]{babel}
\usepackage{cite, hyperref}

\usepackage{amsfonts, amsmath}
\usepackage{amssymb, amsthm}
\usepackage{bbm}
\usepackage{calc}
\usepackage[shortlabels]{enumitem}

\usepackage[a4paper, top=2.5cm, bottom=2.5cm, left=3cm, right=3cm]{geometry}
\usepackage{xcolor}
\usepackage{graphicx}
\usepackage[font=small, labelfont= bf]{caption}
\allowdisplaybreaks

\numberwithin{equation}{section}
\theoremstyle{plain}

\newtheorem{Lemma}{Lemma}[section]
\newtheorem{Proposition}[Lemma]{Proposition}
\newtheorem{Theorem}[Lemma]{Theorem}
\newtheorem{Corollary}[Lemma]{Corollary}

\theoremstyle{definition}
\newtheorem{Definition}[Lemma]{Definition}
\newtheorem{Example}[Lemma]{Example}
\newtheorem{Remark}[Lemma]{Remark}

\def\E{{\mathbb{E}}}
\def\N{\mathbb{N}}
\def\P{\mathbb{P}}
\def\R{{\mathbb{R}}}
\DeclareMathOperator{\sgn}{sgn}

\begin{document}
\title{\textbf{On McKean--Vlasov SDEs with polynomial drifts for SIS epidemic models}}
\author{Alexander Kalinin\footnote{Department of Mathematics, LMU Munich, Germany. E-mail addresses: {\tt kalinin@math.lmu.de}, {\tt meyerbra@math.lmu.de} and {\tt steibel@math.lmu.de}}
\and Thilo Meyer-Brandis\footnotemark[1]
\and Annika Steibel\footnotemark[1]}
\date{April 21, 2026}
\maketitle

\begin{abstract}
We present a tractable class of one-dimensional McKean--Vlasov equations that allow for unique strong solutions and extend the dynamics of various SIS epidemic models that are well-established in the literature. While the distribution-dependent drift coefficients are of polynomial type, the diffusion coefficients may involve sums of power functions. Our analysis includes various scenarios of extinction and persistence of the disease and an effective Euler--Maruyama scheme, for which we derive an explicit strong error estimate in $p$th moment for $p\geq 2$.
\end{abstract}

\noindent
{\bf MSC2020 classification:} 60H20, 92D30, 60F15, 65C30.\\
{\bf Keywords:} McKean--Vlasov equation, SIS epidemic model, polynomial drift, extinction, persistence, Euler--Maruyama scheme, strong error estimate.

\section{Introduction}

The coronavirus pandemic at the beginning of this decade had a profound impact on societies worldwide. This crisis emphasised the crucial role of epidemic modelling, which aims to analyse and forecast the spread of infectious diseases within a population. By understanding the mechanisms that drive an epidemic and predicting the effectiveness of interventions, such as vaccinations or social distancing, mathematical models can help to make informed decisions that mitigate the course of an epidemic. 

In compartmental models, as proposed by Kermack and McKendrick~\cite{KerMcK27}, a given population is divided into categories according to the health status, and the transition between these compartments is originally described by a system of ordinary differential equations (ODEs). In this work, we focus on susceptible-infected-susceptible (SIS) models, in which the population consists of susceptible and infected compartments and there is no permanent immunity upon recovery. Namely, susceptible individuals become infected through contact with infectious individuals at a certain transmission rate, and infectious individuals recover at a constant rate and return to the susceptible compartment.

In a classical deterministic SIS model for a population of size $N\in\N$ considered by Hethcote and Yorke~\cite{HetYor84}, the following system of ODEs is used to describe the dynamics of the respective numbers $S$ and $I$ of susceptible and infected individuals:
\begin{align*}
\dot{S}(t) &= \mu N - \beta S(t) I(t) + \gamma I(t) - \mu S(t),\\
\dot{I}(t) &= \beta S(t) I(t) - (\mu + \gamma) I(t)
\end{align*}
for $t\geq 0$ with initial conditions $S(0) = N - i_{0}$ and $I(0) = i_{0}$, where $i_{0}\in ]0,N[$. Here, $\beta\geq 0$ is the disease transmission coefficient that captures the average number of potentially infectious contacts of an individual and the probability of infection per contact, $\gamma\geq 0$ is the recovery rate and $\mu\geq 0$ represents the per capita birth and death rates. Since $S(t) + I(t) = N$ for any $t\geq 0$, the system can be reduced to the one-dimensional ODE
\begin{equation}\label{eq:deterministic SIS epidemic model}
\dot{I}(t) = \beta I(t)(N-I(t)) - (\mu+\gamma)I(t)
\end{equation}
for $t\geq 0$ with initial condition $I(0) = i_{0}$. There are various extensions of the deterministic SIS epidemic model~\eqref{eq:deterministic SIS epidemic model}, in which the dynamics of the number of infected individuals are described by a stochastic differential equation (SDE) driven by a standard Brownian motion $B$ that is defined on a complete probability space $(\Omega,\mathcal{F},\P)$.

In the work~\cite{GraGreHuMaoPan11} by Gray, Greenhalgh, Hu, Mao and Pan, a stochastic SIS model is derived from~\eqref{eq:deterministic SIS epidemic model} by allowing for random perturbations around the disease transmission coefficient $\beta$, which accounts for uncertainty due to environmental factors or behavioural changes. Their proposed SDE that generalises~\eqref{eq:deterministic SIS epidemic model} and describes the dynamics of the random number of infected individuals is of the following form:
\begin{equation}\label{eq:standard SIS epidemic model}
\mathrm{d}I_{t} = \big(\beta I_{t}(N-I_{t}) - (\mu+\gamma)I_{t}\big)\,\mathrm{d}t + \sigma I_{t}(N-I_{t})\,\mathrm{d}B_{t}
\end{equation} 
for $t\geq 0$ with the same initial condition $I_{0} = i_{0}$, and the parameter $\sigma\geq 0$ reflects the intensity of the stochastic fluctuations around $\beta$ with respect to $B$. Cai, Cai and Mao~\cite{CaiCaiMao19, CaiCaiMao19-2} extend this SIS model by incorporating independent or possibly correlated Brownian motions as random perturbations around $\mu + \gamma$.

Regarding other types of parameter perturbations, the SIS model~\cite{WanCaiDinGui18} by Wang, Cai, Ding and Gui is obtained from~\eqref{eq:deterministic SIS epidemic model} by representing the disease transmission coefficient as a mean-reverting Ornstein-Uhlenbeck process. Alternatively, Bernardi and Lanconelli~\cite{BerLan22} consider stochastic fluctuations of $\beta$ around piecewise linear approximations of $W$. By applying the Wong-Zakai Theorem, they deduce an SDE from~\eqref{eq:deterministic SIS epidemic model} that governs the epidemic dynamics.\smallskip

In this work, we propose an SIS model based on the broad class of SDEs~\eqref{eq:SIS epidemic model} with law-dependent drift coefficients of polynomial type, generalising all these models. Before introducing the mathematical framework in detail, we motivate the choice of these drift coefficients through an illustrative example. To this end, we recall that \eqref{eq:standard SIS epidemic model} provides a realistic approximation of the evolution of the number of infected individuals under the assumption of a large and homogeneously mixing population, as considered in~\cite{ArmBec17}.

Now consider a system of $n\in\N$ such populations, each undergoing an endogenous SIS epidemic with identical parameters, supplemented by inter-population transmission. We assume that these subpopulations are homogeneously mixing -- that is, their interaction is not constrained by network structures, such as geographical segregation. Moreover, the aggregate impact of the inter-population interaction on any given subpopulation depends solely on the distribution of the infected states throughout the system.

As a motivating example, let us consider a vast collection of honeybee colonies, each containing tens of thousands of bees that interact freely within their respective hives. The colonies may be clustered in an apiary or distributed in a natural habitat, sharing common foraging areas and resources that facilitate inter-population transmission of a disease. Further, the pathogens are transmitted either directly through contact with conspecifics within the same colony or indirectly via environmental contamination, such as spore-laden food or polluted water, as mentioned in~\cite{MuhEbe20, MazGaj22}.

Thus, the average infection level across all colonies influences the disease transmission probability within any colony. Indeed, the environmental pathogen exposure can increase the susceptibility of individual bees, thereby enhancing the likelihood of infection during direct contact. In addition, a high average infection level may induce behavioural or management-related changes -- including stress responses, avoidance of contaminated areas or human interventions -- which further impact the likelihood of infection.

A straightforward way to incorporate such a scenario into the model~\eqref{eq:standard SIS epidemic model} is to assume that the impact of the inter-population interaction on the disease transmission rate is proportional to the average infection level across all subpopulations. More specifically, let $I^{(n,\ell)}$ be the random number of infected individuals in each subpopulation $\ell\in\{1,\dots,n\}$. Then the disease transmission coefficient acting on each subpopulation could be of the form $\beta + \beta_{1}\frac{1}{N}\frac{1}{n}\sum_{m=1}^{n}I^{(n,m)}_{t}$ at any time $t\geq 0$.

The parameter $\beta\geq 0$ models the disease transmission rate within any subpopulation in the absence of inter-population interactions and $\beta_{1}\geq -\beta$ governs both the magnitude and direction of the impact of the average infection level. Since the subpopulations may tend to reduce their contact with one another in response to a rising average infection level, we allow $\beta_{1}$ to take negative values, capturing behavioural adaptations and precautionary measures. This leads to the following dynamics:
\begin{equation}\label{eq:standard SIS epidemic model with empirical means}
\begin{split}
\mathrm{d}I_{t}^{(n,\ell)} &= \bigg(\bigg(\beta + \beta_{1}\frac{1}{N}\frac{1}{n}\sum_{m=1}^{n}I_{t}^{(n,m)}\bigg)I_{t}^{(n,\ell)}(N - I^{(n,\ell)}_{t}) - (\mu + \gamma)I_{t}^{(n,\ell)}\bigg)\,\mathrm{d}t\\
&\quad + \sigma I_{t}^{(n,\ell)}(N-I_{t}^{(n,\ell)})\,\mathrm{d}B_{t}^{(\ell)}
\end{split}
\end{equation}
for $t\geq 0$, where $B^{(1)},\dots,B^{(n)}$ are independent standard Brownian motions. As the number $n$ of subpopulations tends to infinity, the impact of any single subpopulation on the entire system vanishes asymptotically, and the numbers of infected individuals in each subpopulation behave as if they were independent. This probabilistic phenomenon arises in a variety of interacting particle systems and is formally referred to as \emph{propagation of chaos}.

It entails that, under appropriate regularity conditions, the system's dynamics decouple and the particles become approximately independent and identically distributed as the system size tends to infinity. To be precise, let $(I^{(n,1)},\dots,I^{(n,n)})$ be a strong solution to the $n$-dimensional system~\eqref{eq:standard SIS epidemic model with empirical means} of SDEs with initial conditions $I_{0}^{(n,1)} = \cdots = I_{0}^{(n,n)} = i_{0}$. Furthermore, for each $\ell\in\{1,\dots,n\}$ let $I^{(\ell)}$ denote the unique strong solution to the McKean--Vlasov SDE
\begin{equation}\label{eq:standard SIS epidemic model with an expected value}
\mathrm{d}I_{t}^{(\ell)} = \bigg(\bigg(\beta + \beta_{1}\frac{\E[I_{t}^{(\ell)}]}{N}\bigg) I_{t}^{(\ell)}(N-I_{t}^{(\ell)}) - (\mu+\gamma)I_{t}^{(\ell)}\bigg)\,\mathrm{d}t + \sigma I_{t}^{(\ell)}(N-I_{t}^{(\ell)})\,\mathrm{d}B_{t}^{(\ell)}
\end{equation}
for $t\geq 0$ with initial condition $I_{0}^{(\ell)} = i_{0}$. Then $I^{(1)},\dots,I^{(n)}$ are independent and it follows that $\lim_{n\uparrow\infty}\max_{\ell=1,\dots,n}\P(\sup_{t\in [0,T]}|I_{t}^{(n,\ell)} - I_{t}^{(\ell)}|\geq\varepsilon) = 0$  for any $T,\varepsilon > 0$. For details, we refer to~\cite{Szn91, Mel96, ReiEngSmi22}. Consequently, the mean-field SDE~\eqref{eq:standard SIS epidemic model with an expected value} characterises the dynamics of a representative subpopulation, and the law-dependent drift coefficient accounts for the mean-field interaction within a large system of homogeneous subpopulations undergoing an endogenous SIS epidemic with identical parameters.

To account for alternative mechanisms of inter-population contagion described by mean-field equations, let us suppose instead that a disease is transmitted exclusively through direct contact across all colonies. In addition, beyond interactions within their own subpopulation, every individual maintains a constant rate of potentially infectious contacts with members of other subpopulations.

Let $\beta$ and $\beta_{0}$ denote the intra-population and inter-population disease transmission rates, respectively. Then the dynamics of the number $I^{(n,\ell)}$ of infected individuals in each subpopulation $\ell\in\{1,\dots,n\}$
can be modelled by~\eqref{eq:standard SIS epidemic model with empirical means} once the expression
\begin{equation*}
\bigg(\beta + \beta_{1}\frac{1}{N}\frac{1}{n}\sum_{m=1}^{n}I_{t}^{(n,m)}\bigg)I_{t}^{(n,\ell)}\quad\text{is replaced by}\quad \beta I^{(n,\ell)}_{t} + \beta_{0}\frac{1}{n}\sum_{m=1}^{n}I^{(n,m)}_{t}.
\end{equation*}
By propagation of chaos, as $n$ grows large, it follows that the dynamics of a representative subpopulation can be described by~\eqref{eq:standard SIS epidemic model with an expected value} if $(\beta + \beta_{1}\frac{\E[I_{t}^{(\ell)}]}{N})I_{t}^{(\ell)}$ is replaced by $\beta I_{t} + \beta_{0}\E[I_{t}^{(\ell)}]$.

For the precise formulation of our SIS model, let $\mathbb{F} = (\mathcal{F}_{t})_{t\geq 0}$ be a right-continuous filtration of $\mathcal{F}$ such that $\mathcal{F}_{0}$ contains all null events and $W$ be a standard $d$-dimensional $\mathbb{F}$-Brownian motion for some $d\in\N$.

For $p\geq 1$ we write $\mathcal{P}_{p}(\R)$ for the Polish space of all Borel probability measures $\nu$ on $\R$ with finite $p$th absolute moment $\int_{\R} |x|^{p}\,\nu(\mathrm{d}x)$, endowed with the \emph{$p$th Wasserstein metric} given by
\begin{equation*}
\mathcal{W}_{p}(\nu,\tilde{\nu}) := \inf_{\theta\in\mathcal{P}(\nu,\tilde{\nu})} \bigg(\int_{\R\times \R} |x - y|^{p}\,\mathrm{d}\theta(x,y)\bigg)^{\frac{1}{p}},
\end{equation*}
where $\mathcal{P}(\nu,\tilde{\nu})$ is the convex space of all Borel probability measures $\theta$ on $\R\times \R$ with first and second marginal distributions $\nu$ and $\tilde{\nu}$, respectively, for any $\nu,\tilde{\nu}\in\mathcal{P}_{p}(\R)$. We endow the convex space $\mathcal{P}_{0}(\R)$ of all Borel probability measures $\nu$ on $\R$ whose support $\mathrm{supp}(\nu)$ is compact with a pseudometric $\vartheta$ that is ought to satisfy
\begin{equation}\label{eq:domination condition}
\vartheta(\nu,\tilde{\nu}) \leq \mathcal{W}_{p}(\nu,\tilde{\nu})\quad\text{for all $\nu,\tilde{\nu}\in \mathcal{P}_{0}(\R)$.}
\end{equation}
This allows for the choice $\vartheta = \mathcal{W}_{q}$ for any $q\in [1,p]$. In this context, we recall that the support of a Borel probability measure $\nu$ on the Polish space $\R$ is the smallest closed $C$ set in $\R$ that satisfies $\nu(C) = 1$.

For $k\in\N$ let $b_{0},\dots,b_{k}\colon\R_{+}\times\R_{+}\times\mathcal{P}_{0}(\R)\rightarrow\R$ and $f\colon\R_{+}\times\R_{+}\times\R_{+}\rightarrow\R^{1\times d}$ be product measurable and $N\colon\R_{+}\rightarrow\R_{+}$ be locally absolutely continuous. By means of these maps we introduce the one-dimensional McKean--Vlasov SDE
\begin{equation}\label{eq:SIS epidemic model}
\mathrm{d}I_{t} = \sum_{i=0}^{k} b_{i}\big(t,N(t),\mathcal{L}(I_{t})\big)I_{t}^{i}\,\mathrm{d}t + f(t,I_{t},N(t) - I_{t})\,\mathrm{d}W_{t}
\end{equation}
for $t\geq 0$ with initial condition $I_{0} = \xi_{0}$, where $\xi_{0}$ is an $\mathcal{F}_{0}$-measurable random variable taking all its values in $[0,N(0)]$. Here, $\mathcal{L}(X)$ denotes the distribution of any random variable $X$, and the closed and pathwise connected set $D:=\{(t,x)\in \R_{+}\times\R_{+}\mid x\leq N(t)\}$ is the domain of the underlying diffusion coefficient
\begin{equation}\label{eq:diffusion coefficient}
D\rightarrow\R^{1\times d},\quad (t,x)\mapsto f(t,x,N(t)-x).
\end{equation}
So, any solution $I$ has to satisfy $(t,I_{t})\in D$ for all $t\geq 0$, which is equivalent to $0\leq I \leq N$. We will solve~\eqref{eq:SIS epidemic model} uniquely in Propositions~\ref{pr:existence, uniqueness and a growth estimate} and~\ref{pr:existence, uniqueness and a growth estimate 2} by combining the pathwise uniqueness and strong existence results in~\cite{KalMeyPro24-2, KalMeyPro24} with the value analysis in~\cite{BriGraKal24}.\smallskip

The epidemic model~\eqref{eq:SIS epidemic model} generalises, in multiple ways, various well-established SIS models from the literature, including those in~\cite{GraGreHuMaoPan11, WanCaiDinGui18, CaiCaiMao19, CaiCaiMao19-2, BerLan22}, which are special cases of the \emph{representative model} in Example~\ref{ex:a representative SIS epidemic model}. First, we allow all coefficients and the population size to be time-dependent, providing a more realistic description of the epidemic dynamics. For instance, this enables $\beta$, $\gamma$, $\mu$, $\sigma$ and $N$ in the model~\eqref{eq:standard SIS epidemic model} to vary over time, thereby reflecting temporal changes caused by factors such as seasonality, behavioural adaptations or vaccination programs.

Secondly, the diffusion coefficient~\eqref{eq:diffusion coefficient} may in fact comprise sums of power functions, accommodating a wide range of parameter perturbations. Namely, for $\zeta,\eta \in [\frac{1}{2},\infty[^{d\times m}$ and a measurable locally bounded map $g\colon\R_{+}\rightarrow\R^{d\times m}$, we may take
\begin{equation*}
f_{i}(\cdot,x,y) = g_{i,1}x^{\zeta_{i,1}}y^{\eta_{i,1}} + \cdots + g_{i,m}x^{\zeta_{i,m}}y^{\eta_{i,m}}
\end{equation*}
for all $i\in\{1,\dots,d\}$ and $x,y\geq 0$. Thirdly, the law-dependence of the drift coefficient captures interactions among a large number of homogeneously mixing subpopulations. In our analysis, we will mainly focus on the dependence of the disease transmission on the expected infection level, as illustrated by~\eqref{eq:standard SIS epidemic model with an expected value}.

To the best of our knowledge, the existing literature does not address two-level mixing compartmental epidemic models formulated via McKean--Vlasov SDEs driven by Brownian motions, such as \eqref{eq:SIS epidemic model}. Forien and Pardoux~\cite{ForPar22} study an SIS epidemic model structured by households of relatively small size and derive, as the number of households tends to infinity, a McKean--Vlasov SDE driven by Poisson processes via a propagation of chaos result. Ball, Sirl and Trapman~\cite{BalSirTra24} consider an interacting population structure, similar to our setting, and analyse the final outcome of an SIR epidemic within large subcommunities.

To evaluate the pathwise asymptotic behaviour of the epidemic, we will investigate three fundamental regimes: \emph{exponential extinction}, \emph{persistence above a threshold $x_{0} > 0$} and \emph{persistence around $x_{0}$}. While the first regime describes the eventual clearance of the pathogen at an exponential rate, the second implies that the number of infected individuals exceeds any level $\varepsilon\in ]0,x_{0}[$ infinitely often, ensuring the long-term prevalence of the disease. Lastly, persistence around $x_{0}$ characterises the sustained oscillation of the number of infected individuals near the equilibrium $x_{0}$, as considered in~\cite{GraGreHuMaoPan11, WanCaiDinGui18, CaiCaiMao19, CaiCaiMao19-2, BerLan22}. In Propositions~\ref{pr:extinction} and~\ref{pr:persistence} we provide sufficient conditions for the epidemic model~\eqref{eq:SIS epidemic model} to exhibit extinction and persistence. Furthermore, we pay particular attention to the representative model in Example~\ref{ex:a representative SIS epidemic model}. In doing so, we demonstrate that our results recover and extend the corresponding conditions in~\cite{GraGreHuMaoPan11, WanCaiDinGui18, CaiCaiMao19, CaiCaiMao19-2, BerLan22}.

For the numerical simulation of the epidemic model~\eqref{eq:SIS epidemic model}, we introduce an extended Euler--Maruyama scheme that applies to interacting particle systems, as considered in~\cite{BosTal97}. In this well-established approximation approach for McKean--Vlasov SDEs, every particle follows an SDE derived from the mean-field SDE in which the law of the solution is replaced by the empirical measure of the particle system. The SDEs are then approximated using an Euler--Maruyama method, and convergence follows from propagation of chaos as the number of particles tends to infinity. For example, the interacting particle system for the mean-field SDE~\eqref{eq:standard SIS epidemic model with an expected value} is given by the SDEs~\eqref{eq:standard SIS epidemic model with empirical means}.

Although~\cite{KloPla92} recalls that the standard Euler--Maruyama scheme achieves a strong convergence rate of $\frac{1}{2}$ in the step size for SDEs with Lipschitz continuous coefficients, the method may diverge if the drift coefficient grows superlinearly, as shown in~\cite{HutJenKlo11}. To address this issue, several modified numerical schemes have been developed, including the tamed, stopped and truncated Euler--Maruyama methods in~\cite{Mao15, HutJenKlo12, LiuMao13}. Concerning McKean--Vlasov SDEs with drifts of superlinear growth, propagation of chaos and strong convergence in $L^{2}$ are shown in~\cite{ReiEngSmi22} for both the tamed Euler--Maruyama method and an implicit scheme applied to the corresponding particle systems. 

Under a Lipschitz condition on the diffusion coefficient~\eqref{eq:diffusion coefficient}, for $p\geq 2$ we prove the strong $L^{p}$-convergence of the Euler--Maruyama scheme for an interacting particle system approximating~\eqref{eq:SIS epidemic model}, despite the polynomial growth of the drift. To this end, we provide a unified argument that treats propagation of chaos and time discretizations simultaneously, from which the \emph{explicit strong $L^{p}$-error bound} in Theorem~\ref{thm:strong error estimate} is derived. Under common regularity conditions, the classical strong convergence rate $\frac{1}{2}$ is recovered. Finally, we illustrate our theoretical results on the epidemiological dynamics numerically.\smallskip

{\bf Outline.}~Section~\ref{se:2} establishes the existence of a unique strong solution to~\eqref{eq:SIS epidemic model}  and provides quantitative moment estimates. Moreover, we introduce the representative model in Example~\ref{ex:a representative SIS epidemic model} for particular coefficients. Section~\ref{se:3} describes the pathwise asymptotic behaviour of the epidemic in terms of extinction and persistence. The Euler--Maruyama scheme for~\eqref{eq:SIS epidemic model} based on particle systems is developed and implemented in Section~\ref{se:4}. Lastly, Section~\ref{se:5} contains the proofs of all preliminary and main results.

\section{Derivation of a unique strong solution}\label{se:2}

In what follows, $|\cdot|$ stands for the absolute value function, the Euclidean norm or the Hilbert-Schmidt norm, depending on the context. Further, the Dirac measure at a point $x\in\R$ is denoted by $\delta_{x}$.

\subsection{A probabilistic analysis in the first moment}\label{se:2.1}

First, to ensure pathwise uniqueness for the McKean--Vlasov SDE~\eqref{eq:SIS epidemic model}, we introduce a \emph{boundedness and Lipschitz condition} on the functions $b_{0},\dots,b_{k}$:
\begin{enumerate}[label=(C.\arabic*), ref=C.\arabic*, leftmargin=\widthof{(C.1)} + \labelsep]
\item\label{co:1} For each $i\in\{0,\dots,k\}$ there are measurable functions $\hat{b}_{i}\colon\R_{+}\times\R_{+}\rightarrow\R$ and $\hat{\lambda}_{i}\colon\R_{+}\times\R_{+}\rightarrow\R_{+}$ such that
\begin{equation*}
b_{i}(s,y,\nu) \leq \hat{b}_{i}(s,y)\quad\text{and}\quad |b_{i}(s,y,\nu) - b_{i}(s,y,\tilde{\nu})| \leq \hat{\lambda}_{i}(s,y)\vartheta(\nu,\tilde{\nu})
\end{equation*}
for all $s\geq 0$, $y > 0$ and $\nu,\tilde{\nu}\in\mathcal{P}_{0}(\R)$. Further, $\hat{b}_{i}(\cdot,N)$ and $\hat{\lambda}_{i}(\cdot,N)$ are locally integrable and $b_{i}$ is bounded on the set
\begin{equation*}
\{(s,y,\nu)\in [0,n]\times [0,n]\times\mathcal{P}_{0}(\R)\mid \vartheta(\nu,\delta_{0})\leq n\}\quad\text{for any $n\in\N$.}
\end{equation*}
\end{enumerate}

\begin{Remark}\label{re:bounded on bounded sets}
By calling a non-empty set $\mathcal{P}$ in $\mathcal{P}_{0}(\R)$ bounded if $\sup_{\nu\in\mathcal{P}} \vartheta(\nu,\delta_{0}) < \infty$, the last condition in~\eqref{co:1} means that $b_{0},\dots,b_{k}$ are bounded on bounded sets.
\end{Remark}

Under~\eqref{co:1}, the drift coefficient $D\times\mathcal{P}_{0}(\R)\rightarrow\R$, $(t,x,\nu)\mapsto \sum_{i=0}^{k}b_{i}(t,N(t),\nu)x^{i}$ of~\eqref{eq:SIS epidemic model} satisfies the following \emph{partial Lipschitz condition}:
\begin{equation} \label{eq:partial Lipschitz condition on the drift coefficient}
\begin{split}
\sgn(x-\tilde{x})\sum_{i=0}^{k}\big(b_{i}(s,N(s),\nu)x^{i} &- b_{i}(s,N(s),\tilde{\nu})\tilde{x}^{i}\big)\\
&\leq \hat{b}_{k+1}(s,N(s))|x-\tilde{x}| + \hat{\lambda}_{k+1}(s,N(s))\vartheta(\nu,\tilde{\nu})
\end{split}
\end{equation}
for all $s\geq 0$, $x,\tilde{x}\in [0,N(s)]$ and $\nu,\tilde{\nu}\in\mathcal{P}_{0}(\R)$, where $\hat{b}_{k+1}\colon\R_{+}\times\R_{+}\rightarrow\R$ is any measurable function such that
\begin{equation}\label{eq:regularity coefficient 1 for the drift coefficient}
\max_{x\in [0,y]}\sum_{i=1}^{k}b_{i}(\cdot,y,\nu)ix^{i-1} \leq \hat{b}_{k+1}(\cdot,y)
\end{equation}
for all $y\geq 0$ and $\nu\in\mathcal{P}_{0}(\R)$ and $\hat{b}_{k+1}(\cdot,N)$ is locally integrable and the measurable function $\hat{\lambda}_{k+1}\colon\R_{+}\times\R_{+}\rightarrow\R_{+}$ is defined by
\begin{equation}\label{eq:regularity coefficient 2 for the drift coefficient}
\hat{\lambda}_{k+1}(s,y) := \sum_{i=0}^{k}\hat{\lambda}_{i}(s,y)y^{i}.
\end{equation}
For instance, we may take $\hat{b}_{k+1}(\cdot,y) = \max_{x\in [0,y]} \sum_{i=1}^{k}\hat{b}_{i}(\cdot,y) i x^{i-1}$ for all $y\geq 0$. Further, we require an \emph{$\frac{1}{2}$-Hölder continuity condition on compact sets} on the map $f$.
\begin{enumerate}[label=(C.\arabic*), ref=C.\arabic*, leftmargin=\widthof{(C.2)} + \labelsep]
\setcounter{enumi}{1}
\item\label{co:2} For each $n\in\N$ there is a measurable locally bounded function $\lambda_{n}\colon\R_{+}\rightarrow\R_{+}$ such that
\begin{equation*}
|f(s,x,y) - f(s,\tilde{x},\tilde{y})| \leq \lambda_{n}(s)\big(|x-\tilde{x}|^{\frac{1}{2}} + |y-\tilde{y}|^{\frac{1}{2}}\big)
\end{equation*}
for all $s\geq 0$ and $x,\tilde{x},y,\tilde{y}\in [0,n]$.
\end{enumerate}

Condition~\eqref{co:2} implies that the diffusion coefficient of~\eqref{eq:SIS epidemic model} that is given by~\eqref{eq:diffusion coefficient} satisfies the following \emph{$\frac{1}{2}$-Hölder continuity condition}:
\begin{equation*}
|f(s,x,N(s) - x) - f(s,\tilde{x},N(s)-\tilde{x})| \leq  2\lambda(s,N(s))|x-\tilde{x}|^{\frac{1}{2}}
\end{equation*}
for all $s\geq 0$ and $x,\tilde{x}\in [0,N(s)]$ with the $\R_{+}$-valued measurable locally bounded function $\lambda$ on $\R_{+}\times\R_{+}$ given by $\lambda(\cdot,y) = \lambda_{\lceil y \rceil}$ for $y > 0$ and $\lambda(\cdot,0) = 0$.

\begin{Example}[Sums of power functions]\label{ex:sums of power functions}
For $m\in\N$ and $\zeta,\eta\in [\frac{1}{2},\infty[^{d\times m}$ let $g$ be an $\R^{d\times m}$-valued measurable locally bounded map on $\R_{+}$ such that
\begin{equation*}
f_{i}(s,x,y) = g_{i,1}(s)x^{\zeta_{i,1}}y^{\eta_{i,1}} + \cdots + g_{i,m}(s)x^{\zeta_{i,m}}y^{\eta_{i,m}}
\end{equation*}
for all $i\in\{1,\dots,d\}$, $s\geq 0$ and $x,y\geq 0$. Then~\eqref{co:2} is satisfied. 
\end{Example}

These conditions ensure that~\eqref{eq:SIS epidemic model} admits at most a unique solution satisfying a given initial condition. In fact, for any $i\in\{0,\dots,k\}$ let us define a Borel measurable function $\overline{b}_{i}\colon\R_{+}\times\R_{+}\times\mathcal{P}_{p}(\R)\rightarrow\R$ and an extension $\overline{f}$ of $f$ to $\R_{+}\times\R\times\R$ by
\begin{equation}\label{eq:extended functions}
\overline{b}_{i}(s,y,\nu) := b_{i}\big(s,y,\nu\circ b_{k+1}(\cdot,y)^{-1}\big)\quad\text{and}\quad \overline{f}(s,x,y) := f(s,x^{+},y^{+})
\end{equation}
with the Lipschitz continuous function $b_{k+1}\colon\R\times\R_{+}\rightarrow\R_{+}$ given by $b_{k+1}(x,y) := x^{+}\wedge y$. We note that $\nu\circ b_{k+1}(\cdot,y)^{-1} = \nu$ for all $y\geq 0$ and $\nu\in\mathcal{P}_{p}(\R)$ with $\mathrm{supp}(\nu)\subset [0,y]$. Hence, any solution to~\eqref{eq:SIS epidemic model} solves the extended McKean--Vlasov SDE
\begin{equation}\label{eq:extended McKean--Vlasov SDE}
\mathrm{d}I_{t} = \sum_{i=0}^{k}\overline{b}_{i}\big(t,N_{t},\mathcal{L}(I_{t})\big)\big(I_{t}^{+}\wedge N(t)\big)^{i}\,\mathrm{d}t + \overline{f}(t,I_{t},N(t) - I_{t})\,\mathrm{d}W_{t}
\end{equation}
for $t\geq 0$. Since the drift and diffusion coefficients of~\eqref{eq:extended McKean--Vlasov SDE} are defined on $\R_{+}\times\R\times\mathcal{P}_{p}(\R)$ and $\R_{+}\times\R$, respectively, Corollary~3.1 and Theorem~4.2 in~\cite{KalMeyPro24-2} entail \emph{uniqueness} and a \emph{quantitative $L^{1}$-comparison estimate}.

\begin{Proposition}\label{pr:pathwise uniqueness}
Let $p=1$ and~\eqref{co:1} and~\eqref{co:2} be valid. Then pathwise uniqueness holds for~\eqref{eq:SIS epidemic model}, and any two solutions $I$ and $\tilde{I}$ to~\eqref{eq:SIS epidemic model} satisfy
\begin{equation}\label{eq:comparison estimate}
\E\big[|I_{t} - \tilde{I}_{t}|\big] \leq e^{\int_{0}^{t}(\hat{b}_{k+1} + \hat{\lambda}_{k+1})(s,N(s))\,\mathrm{d}s}\E\big[|I_{0} - \tilde{I}_{0}|\big]
\end{equation}
for all $t\geq 0$. In particular, if $(\hat{b}_{k+1} + \hat{\lambda}_{k+1})^{+}(\cdot,N)$ is integrable, then $\sup_{t\geq 0} \E[|I_{t} -\tilde{I}_{t}|]$ is finite. If additionally
\begin{equation*}
\int_{0}^{\infty}(\hat{b}_{k+1}+\hat{\lambda}_{k+1})^{-}(s,N(s))\,\mathrm{d}s = \infty,\quad\text{then}\quad \lim_{t\uparrow\infty} \E\big[|I_{t}-\tilde{I}_{t}|\big] = 0.
\end{equation*}
\end{Proposition}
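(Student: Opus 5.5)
Since any solution $I$ of~\eqref{eq:SIS epidemic model} takes values in $[0,N]$, the map $b_{k+1}(\cdot,N(t))$ acts as the identity on its range, so $\mathcal{L}(I_t)\circ b_{k+1}(\cdot,N(t))^{-1}=\mathcal{L}(I_t)$. Hence $I$ solves the extended equation~\eqref{eq:extended McKean--Vlasov SDE}. The plan is to verify the hypotheses of Corollary~3.1 and Theorem~4.2 in~\cite{KalMeyPro24-2} for this extended equation, whose coefficients live on $\R_{+}\times\R\times\mathcal{P}_{1}(\R)$ and $\R_{+}\times\R$. The estimate~\eqref{eq:comparison estimate} then follows from the quantitative $L^{1}$-comparison bound there, and the two asymptotic statements are elementary consequences of~\eqref{eq:comparison estimate}.

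\textbf{The extended coefficients.} For the drift I would establish the partial Lipschitz condition~\eqref{eq:partial Lipschitz condition on the drift coefficient} on all of $\R$. Write $\pi(x):=b_{k+1}(x,N(s))=x^{+}\wedge N(s)$, which is nondecreasing and $1$-Lipschitz.
\begin{itemize}
\item \emph{Splitting.} Insert the mixed term $\overline{b}_i(s,N(s),\nu)\pi(\tilde x)^i$ and estimate the two differences separately.
\item \emph{Spatial part.} By the mean value theorem on $[0,N(s)]$ and~\eqref{eq:regularity coefficient 1 for the drift coefficient}, it is bounded by $\hat b_{k+1}(s,N(s))|\pi(x)-\pi(\tilde x)|$, with the correct sign because $\sgn(x-\tilde x)(\pi(x)-\pi(\tilde x))\ge0$.
\item \emph{Caveat on the sign.} If $\hat b_{k+1}$ is negative, this spatial bound is weaker than $\hat b_{k+1}|x-\tilde x|$ outside $[0,N]$. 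I would either replace $\hat b_{k+1}$ by its positive part there, or better, note that the comparison theorem only needs the condition along the actual state space $[0,N]$, where $\pi$ is the identity.
\item \emph{Measure part.} Use~\eqref{co:1}, $|\pi(\tilde x)|^i\le N(s)^i$, and the fact that pushing forward by the $1$-Lipschitz map $\pi$ does not increase $\mathcal{W}_1$. Together with~\eqref{eq:domination condition} for $p=1$, this gives the bound $\hat\lambda_{k+1}(s,N(s))\mathcal{W}_1(\nu,\tilde\nu)$ with $\hat\lambda_{k+1}$ from~\eqref{eq:regularity coefficient 2 for the drift coefficient}.
\end{itemize}
For the diffusion, $\overline f(s,x,N(s)-x)$ is $\tfrac12$-Hölder in $x$ with constant $2\lambda(s,N(s))$ by~\eqref{co:2}, because $x\mapsto x^{+}$ is $1$-Lipschitz. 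This is the Yamada--Watanabe-type modulus needed for the Le Gall/Yamada--Watanabe local-time argument in~\cite{KalMeyPro24-2}. Local integrability of all coefficients along $N$ comes from~\eqref{co:1}, and boundedness on bounded sets comes from Remark~\ref{re:bounded on bounded sets}.

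\textbf{The comparison estimate.} With these conditions, Theorem~4.2 of~\cite{KalMeyPro24-2} gives, for two solutions,
$$\E[|I_t-\tilde I_t|]\le \E[|I_0-\tilde I_0|]+\int_0^t(\hat b_{k+1}+\hat\lambda_{k+1})(s,N(s))\,\E[|I_s-\tilde I_s|]\,\mathrm{d}s .$$
Here I use $\mathcal{W}_1(\mathcal{L}(I_s),\mathcal{L}(\tilde I_s))\le\E[|I_s-\tilde I_s|]$. Gronwall's inequality, in the exact differential form valid for signed integrands because the estimate comes from It\^o--Tanaka applied to $|I-\tilde I|$, yields~\eqref{eq:comparison estimate}.

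Pathwise uniqueness follows. If $I_0=\tilde I_0$, the right-hand side vanishes, so $I_t=\tilde I_t$ a.s. for each $t$, and by path continuity the processes are indistinguishable. For the final claims, bound the exponent by $\int_0^\infty(\cdot)^{+}$ to get a uniform bound. If the negative part has divergent integral while the positive part is integrable, the exponent tends to $-\infty$, so the expectation tends to $0$.

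\textbf{Main obstacle.} I expect the main obstacle to be verifying precisely the hypothesis format of~\cite{KalMeyPro24-2}, in particular the sign issue of $\hat b_{k+1}$ off $[0,N]$. I would handle it by running the It\^o--Tanaka/Gronwall argument directly on $[0,N]$-valued solutions instead of relying on a global condition.
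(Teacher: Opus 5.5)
Your proof is correct, and its analytic core matches the paper's. Both rely on the one-sided bound $\sgn(Y_s)\mathrm{B}_s\le \hat b_{k+1}(s,N(s))|Y_s|+\hat\lambda_{k+1}(s,N(s))\vartheta(\mathcal{L}(I_s),\mathcal{L}(\tilde I_s))$ for $Y=I-\tilde I$, on the $\frac12$-H\"older control of the diffusion difference, and on $\vartheta\le\mathcal{W}_1\le\E[|Y_s|]$, combined via Theorem~4.2 of \cite{KalMeyPro24-2}.

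The difference is in how you obtain pathwise uniqueness. The paper proves it separately: it passes to the extended equation~\eqref{eq:extended McKean--Vlasov SDE}, asserts~\eqref{eq:partial Lipschitz condition on the extended drift coefficient} for all $x,\tilde x\in\R$, and invokes Corollary~3.1 of \cite{KalMeyPro24-2}. You instead read uniqueness off~\eqref{eq:comparison estimate} itself (take $I_0=\tilde I_0$, then use path continuity). This needs the drift inequality only on $[0,N]$, so the extension becomes superfluous.

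Your sign caveat is justified. For $x,\tilde x<0$ and $\nu=\tilde\nu$, the left-hand side of~\eqref{eq:partial Lipschitz condition on the extended drift coefficient} is $0$, whereas the right-hand side is $\hat b_{k+1}|x-\tilde x|<0$ whenever $\hat b_{k+1}<0$. So the paper's global claim should be read with $\hat b_{k+1}^{+}$. This is harmless for uniqueness, and the quantitative estimate is applied only to $[0,N]$-valued solutions anyway.

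The paper's detour does buy something: the regularity of the extended coefficients it verifies is reused for the existence result via Theorem~3.1 of \cite{KalMeyPro24-2}. For the present statement alone, your route is shorter and self-contained.

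One precision: when $\hat b_{k+1}$ changes sign, the integral inequality you display does not by itself yield~\eqref{eq:comparison estimate}, since Gronwall's lemma in integral form fails for signed coefficients. Apply It\^o--Tanaka to $e^{-\int_0^t \hat b_{k+1}(s,N(s))\,\mathrm{d}s}|Y_t|$ and take expectations. Only then use Gronwall, with the nonnegative coefficient $\hat\lambda_{k+1}(\cdot,N)$.
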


Now we turn to the \emph{derivation of a unique strong solution} $I$ to~\eqref{eq:SIS epidemic model}  that satisfies the initial condition $I_{0} = \xi_{0}$ a.s. To this end, we observe that~\eqref{co:1} implies
\begin{equation}\label{eq:partial growth condition on the drift coefficient}
\sum_{i=0}^{k}b_{i}(s,N(s),\nu)x^{i} \leq \hat{b}_{0}(s,N(s)) + \hat{b}_{k+2}(s,N(s))x
\end{equation}
for all $s\geq 0$, $x\in [0,N(s)]$ and $\nu\in\mathcal{P}_{0}(\R)$. This is a \emph{partial affine growth condition} on the drift coefficient of~\eqref{eq:SIS epidemic model} and $\hat{b}_{k+2}\colon\R_{+}\times\R_{+}\rightarrow\R$ is any measurable function such that
\begin{equation}\label{eq:regularity coefficient 3 for the drift coefficient}
\max_{x\in [0,y]} \sum_{i=1}^{k}b_{i}(\cdot,y,\nu)x^{i-1} \leq \hat{b}_{k+2}(\cdot,y)
\end{equation}
for any $y\geq 0$ and $\nu\in\mathcal{P}_{0}(\R)$ and $\hat{b}_{k+2}(\cdot,N)$ is locally integrable. For example, $\hat{b}_{k+2}(\cdot,y)$ $= \max_{x\in [0,y]} \sum_{i=1}^{k}\hat{b}_{i}(\cdot,y)x^{i-1}$ for any $y\geq 0$ is possible.

In combination with~\eqref{co:1} and~\eqref{co:2}, the next condition ensures that any solution $\overline{I}$ to~\eqref{eq:extended McKean--Vlasov SDE} with $\overline{I}_{0} = \xi_{0}$ a.s.~satisfies $0 \leq \overline{I} \leq N$ a.s., which yields a solution to~\eqref{eq:SIS epidemic model}.
\begin{enumerate}[label=(C.\arabic*), ref=C.\arabic*, leftmargin=\widthof{(C.3)} + \labelsep]
\setcounter{enumi}{2}
\item\label{co:3} We have $b_{0}(\cdot,N,\nu)\geq 0$ and $
\dot{N} \geq \sum_{i=0}^{k}b_{i}(\cdot,N,\nu)N^{i}$ for any $\nu\in\mathcal{P}_{0}(\R)$ and $f(\cdot,0,y)$ $= f(\cdot,x,0) = 0$ for all $x,y\geq 0$.
\end{enumerate}

As will be shown in the proof of Proposition~\ref{pr:existence, uniqueness and a growth estimate}, if in addition $\xi_{0} = i_{0}$ a.s.~for some $i_{0}\in ]0,N(0)[$, then $0 < \overline{I} < N$ a.s.~once the subsequent condition is valid:
\begin{enumerate}[label=(C.\arabic*), ref=C.\arabic*, leftmargin=\widthof{(C.4)} + \labelsep]
\setcounter{enumi}{3}
\item\label{co:4} There are $\varepsilon > 0$, a measurable locally integrable function $c\colon\R_{+}\rightarrow\R_{+}$ and an increasing function $\varphi\colon ]0,\varepsilon]\rightarrow\R_{+}$ satisfying
\begin{align*}
\frac{1}{2}\frac{|f(\cdot,x,N-x)|^{2}}{x} &\leq b_{0}(\cdot,N,\nu) + c x\varphi(x),\\
\frac{1}{2}\frac{|f(\cdot,N-x,x)|^{2}}{x} &\leq \dot{N} - \sum_{i=0}^{k}b_{i}(\cdot,N,\nu)N^{i} + cx\varphi(x)
\end{align*}
for any $x\in ]0,\varepsilon[$ with $x\leq N$ and $\nu\in\mathcal{P}_{0}(\R)$.
\end{enumerate}

\begin{Remark}
By recalling the domain $D$ of the diffusion coefficient~\eqref{eq:diffusion coefficient}, we see that $0 < \overline{I}_{t}(\omega) < N(t)$ is equivalent to $(t,\overline{I}_{t}(\omega))\in D^{\circ}$ for any $t > 0$ and $\omega\in\Omega$.
\end{Remark}

\begin{Example}\label{ex:power sum condition}
Let the representation for $f$ in Example~\ref{ex:sums of power functions} hold, which involves sums of power functions. Then Lemma~\ref{le:verifiable estimates} verifies that~\eqref{co:4} follows from the estimates
\begin{equation}\label{eq:power sum condition}
\begin{split}
\frac{1}{2}\sum_{i=1}^{d}\sum_{\substack{j_{1},j_{2}=1,\\ \zeta_{i,j_{1}} + \zeta_{i,j_{2}} < 2}}^{m}(g_{i,j_{1}}g_{i,j_{2}})^{+} N^{\eta_{i,j_{1}} + \eta_{i,j_{2}}} &\leq b_{0}(\cdot,N,\nu)\quad\text{and}\\
\frac{1}{2}\sum_{i=1}^{d}\sum_{\substack{j_{1},j_{2}=1,\\ \eta_{i,j_{1}} + \eta_{i,j_{2}} < 2}}^{m}(g_{i,j_{1}}g_{i,j_{2}})^{+}N^{\zeta_{i,j_{1}} + \zeta_{i,j_{2}}}  &\leq \dot{N} - \sum_{i=0}^{k}b_{i}(\cdot,N,\nu)N^{i}
\end{split}
\end{equation}
for all $\nu\in\mathcal{P}_{0}(\R)$. In the particular case that $\zeta,\eta\in [1,\infty[^{d\times m}$, the left-hand terms in~\eqref{eq:power sum condition} vanish and we obtain the first two estimates in~\eqref{co:3}.
\end{Example}

From Theorems~3.1 and~4.2 in~\cite{KalMeyPro24-2} and Proposition~3.3 in~\cite{BriGraKal24} we now obtain an \emph{existence and uniqueness result} with an \emph{explicit $L^{1}$-growth estimate}.

\begin{Proposition}\label{pr:existence, uniqueness and a growth estimate}
Let $p=1$ and~\eqref{co:1}--\eqref{co:3} be satisfied. Then~\eqref{eq:SIS epidemic model} admits a unique strong solution $I$ such that $I_{0} = \xi_{0}$ a.s.~and
\begin{equation}\label{eq:first moment growth estimate}
\E\big[I_{t}\big] \leq e^{\int_{0}^{t}\hat{b}_{k+2}(s,N(s))\,\mathrm{d}s}\E\big[\xi_{0}\big] + \int_{0}^{t}e^{\int_{s}^{t}\hat{b}_{k+2}(\tilde{s},N(\tilde{s}))\,\mathrm{d}\tilde{s}}\hat{b}_{0}(s,N(s))\,\mathrm{d}s
\end{equation}
for all $t\geq 0$. Moreover, the following two assertions hold:
\begin{enumerate}[(i)]
\item If $\hat{b}_{k+2}^{+}(\cdot,N)$ and $\hat{b}_{0}(\cdot,N)$ are integrable, then $\sup_{t\geq 0}\E[I_{t}] < \infty$. In this case, we have $\lim_{t\uparrow\infty} \E[I_{t}] = 0$ as soon as $\int_{0}^{\infty}\hat{b}_{k+2}^{-}(s,N(s))\,\mathrm{d}s = \infty$.

\item If $\xi_{0} = i_{0}$ a.s.~and~\eqref{co:4} holds, then $0 < I < N$ a.s.
\end{enumerate}
\end{Proposition}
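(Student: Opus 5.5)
We prove existence for the extended McKean--Vlasov SDE~\eqref{eq:extended McKean--Vlasov SDE}, show its solution stays in $[0,N]$, and then read off the remaining claims. Uniqueness is already covered by Proposition~\ref{pr:pathwise uniqueness}.

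\textbf{Existence for the extended equation.} The drift $(s,x,\nu)\mapsto\sum_{i}\overline{b}_{i}(s,N(s),\nu)(x^{+}\wedge N(s))^{i}$ is defined on $\R_{+}\times\R\times\mathcal{P}_{1}(\R)$. It satisfies the partial Lipschitz bound~\eqref{eq:partial Lipschitz condition on the drift coefficient} in $x$. The key point is that $\nu\mapsto\nu\circ b_{k+1}(\cdot,y)^{-1}$ is $1$-Lipschitz for $\mathcal{W}_{1}$, because $b_{k+1}(\cdot,y)$ is $1$-Lipschitz. Combined with~\eqref{eq:domination condition}, this gives Lipschitz dependence in the measure with coefficient $\hat{\lambda}_{k+1}$. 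The drift also satisfies the affine growth bound~\eqref{eq:partial growth condition on the drift coefficient}, and it is bounded on bounded sets by~\eqref{co:1}. The diffusion $\overline{f}(s,x,N(s)-x)$ is $\frac12$-Hölder in $x$ with locally bounded constant by~\eqref{co:2}, and it is bounded because the arguments are clipped to $[0,N(s)]$. These are exactly the hypotheses of Theorems~3.1 and~4.2 in~\cite{KalMeyPro24-2}, so we obtain a unique strong solution $\overline{I}$ with $\overline{I}_{0}=\xi_{0}$ and finite first moments.

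\textbf{Invariance of $[0,N]$.} Apply the Tanaka/Itô--Meyer formula to $\overline{I}^{-}$.
\begin{itemize}
\item On $\{\overline{I}<0\}$ the diffusion vanishes, since $\overline{f}(s,x,\cdot)=f(s,0,\cdot)=0$ by~\eqref{co:3}. The drift there equals $\overline{b}_{0}(s,N(s),\mathcal{L}(\overline{I}_{s}))\ge0$, since $b_{0}\geq0$ is evaluated at a compactly supported push-forward. The local time at $0$ vanishes by the $\frac12$-Hölder property, via the standard Le Gall argument. Hence $\E[\overline{I}_{t}^{-}]\le0$.
\item Symmetrically, for $(\overline{I}-N)^{+}$ we use that $N$ is absolutely continuous. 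On $\{\overline{I}>N\}$ the diffusion is $f(s,N,0)=0$, and the drift minus $\dot N$ equals $\sum_i b_i N^i-\dot N\le0$ by~\eqref{co:3}.
\end{itemize}
Thus $0\le\overline{I}\le N$ a.s. Consequently $\mathcal{L}(\overline{I}_{t})\in\mathcal{P}_{0}(\R)$ is supported in $[0,N(t)]$, and $\overline{b}_{i}=b_{i}$ along the solution, so $I:=\overline{I}$ solves~\eqref{eq:SIS epidemic model}.

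\textbf{Growth estimate and assertion (i).} Take expectations in the integral equation. Stopping along a localising sequence, the stochastic integral has zero mean since the integrand is bounded on compacts. By~\eqref{eq:partial growth condition on the drift coefficient},
\[
\E[I_t]\le\E[\xi_0]+\int_0^t\big(\hat b_0(s,N(s))+\hat b_{k+2}(s,N(s))\E[I_s]\big)\,\mathrm{d}s .
\]
A Gronwall inequality with signed coefficient, as in Proposition~3.3 of~\cite{BriGraKal24}, yields~\eqref{eq:first moment growth estimate}. Assertion~(i) is then elementary. Under the integrability assumptions the right-hand side is bounded. If $\int\hat b_{k+2}^-=\infty$, the first term tends to $0$, and the integral term tends to $0$ by dominated convergence, since $e^{\int_s^t\hat b_{k+2}}\to0$ for each fixed $s$ while being bounded by $e^{\int\hat b_{k+2}^+}$ and $\hat b_0$ is integrable.

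\textbf{Assertion (ii): strict positivity.} I expect this to be the main obstacle. The plan is a Feller-type boundary test, following the value analysis of~\cite{BriGraKal24}. Apply Itô's formula to $-\log I$ up to the hitting time of $\varepsilon$-neighbourhoods of $0$. Its drift is
\[
-\frac{1}{I}\sum_i b_iI^i+\frac12\frac{|f|^2}{I^2}\le -\sum_{i\ge1}b_iI^{i-1}+c\varphi(I)
\]
by~\eqref{co:4}, which is locally integrable in time; for this I must check that $\varphi$ is bounded on the relevant range. Hence $\E[-\log I_{t\wedge\tau}]$ stays bounded, which excludes hitting $0$ in finite time. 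The same argument applied to $-\log(N-I)$, using the second inequality of~\eqref{co:4} and $\dot N$, excludes hitting $N$. The delicate part is handling the localisation: the boundedness of $\varphi$ near $0$ (it is increasing on $]0,\varepsilon]$, hence bounded by $\varphi(\varepsilon)$) and the time-dependent upper boundary $N(t)$.
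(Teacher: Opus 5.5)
\textbf{Gap: the growth estimate step.} The step that fails as written is the derivation of \eqref{eq:first moment growth estimate}. The integral inequality $\E[I_t]\le\E[\xi_0]+\int_0^t(\hat b_0+\hat b_{k+2}\E[I_s])\,\mathrm{d}s$ on $[0,t]$ does not by itself give the exponential bound when $\hat b_{k+2}(\cdot,N)$ takes negative values. The integral form of Gronwall's lemma needs a non-negative coefficient. Here is a counterexample with $\hat b_{k+2}\equiv-1$, $\hat b_0\equiv0$ and initial value $1$. Take the continuous function that vanishes on $[0,1]$, rises linearly to $1-\delta$ on $[1,1+\delta]$, and then decays like $(1-\delta)e^{-(t-1-\delta)}$. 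It satisfies the integral inequality for every $t$, but it exceeds $e^{-t}$ at $t=1+\delta$.

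The sign is not cosmetic. The signed form is exactly what \eqref{eq:first moment growth estimate} asserts, and the second half of (i) rests on $\int_0^\infty\hat b_{k+2}^-(s,N(s))\,\mathrm{d}s=\infty$. The repair is at hand. You have the identity $\E[I_t]=\E[\xi_0]+\int_0^t\E[\mathrm{B}_s]\,\mathrm{d}s$ with $\mathrm{B}$ locally bounded. So $t\mapsto\E[I_t]$ is absolutely continuous, with a.e.\ derivative $\E[\mathrm{B}_t]\le\hat b_0(t,N(t))+\hat b_{k+2}(t,N(t))\E[I_t]$. Differentiating $e^{-\int_0^t\hat b_{k+2}(s,N(s))\,\mathrm{d}s}\E[I_t]$ then yields the bound. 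Equivalently, apply the product rule to $e^{-\int_0^\cdot\hat b_{k+2}(s,N(s))\,\mathrm{d}s}I$ before taking expectations. The paper obtains this from \cite[Theorem~4.2]{KalMeyPro24-2}. Note that \cite[Proposition~3.3]{BriGraKal24}, which you cite for this step, is the boundary result the paper uses for (ii), not a Gronwall lemma.

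\textbf{Invariance of $[0,N]$.} Otherwise your route is valid and more hands-on than the paper's. The paper applies the comparison theorem \cite[Proposition~2.18]{KarShr91} to the SDE with frozen law, once against the zero process and once for $N-\overline I$ via \eqref{eq:transformed McKean--Vlasov SDE}. Your Tanaka argument with the Le Gall criterion is the same Yamada--Watanabe mechanism made explicit. It works because the stochastic integrals over $\{\overline I\le0\}$ and $\{\overline I>N\}$ vanish identically, and $|\overline\sigma(s,x)|^2\le C|x|$ near $0$ kills the local time. There are two harmless slips. First, $\overline f$ clips its first argument only from below, so boundedness of $\overline\sigma$ beyond $N(s)$ comes from $f(\cdot,x,0)=0$ in \eqref{co:3}. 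Second, on $\{\overline I>N\}$ the diffusion is $f(s,\overline I_s,0)$, not $f(s,N,0)$; it vanishes for the same reason.

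\textbf{Assertion (ii).} The paper verifies condition~(V.5) of \cite{BriGraKal24} via \eqref{eq:random drift and diffusion estimate 1} and \eqref{eq:random drift and diffusion estimate 2}, then cites Proposition~3.3 there. Your logarithmic Lyapunov argument is self-contained and does close. Since $\varphi\le\varphi(\varepsilon)$, the drift of $-\log I$ is bounded above by a locally integrable function on all of $]0,N]$. On $\{I\ge\varepsilon\}$ this follows from \eqref{co:1} and boundedness of $f$ on compacts. Combined with $-\log I\ge-\log\max_{[0,T]}N$, this gives $\P(\tau_{1/n}\le t)=O(1/\log n)$ for the hitting times $\tau_{1/n}$ of $1/n$. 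For $-\log(N-I)$, the second inequality of \eqref{co:4} cancels $\dot N$ near the boundary. What remains is $-\sum_i b_i(N^i-I^i)/(N-I)$, which is bounded by $\sum_i|b_i|iN^{i-1}$. Away from the boundary, $|\dot N|/\varepsilon$ is locally integrable. The paper's citation buys brevity and generality; your version shows exactly which parts of \eqref{co:1}--\eqref{co:4} are used.
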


\begin{Example}[A tractable class of McKean--Vlasov SDEs]\label{ex:a tractable class of McKean--Vlasov SDEs}
For $k=3$ let $c_{0}\colon\R_{+}\rightarrow\R_{+}$ and $c\colon\R_{+}\rightarrow\R^{2\times 2}$ be measurable and locally bounded and
\begin{equation*}
\varphi_{0},\varphi_{1},\varphi_{2}\colon\R_{+}\times\R_{+}\rightarrow\R
\end{equation*}
be measurable, locally bounded and locally Lipschitz continuous in the second variable, locally uniformly in the first variable, such that
\begin{align*}
b_{0}(\cdot,y,\nu) &= c_{0} + y\int_{[0,y]}\varphi_{0}(\cdot,x)\,\nu(\mathrm{d}x),\\
b_{1}(\cdot,y,\nu) &= \int_{[0,y]}\varphi_{1}(\cdot,x)\,\nu(\mathrm{d}x) + c_{1,1}y + c_{1,2}y^{2},\\
b_{2}(\cdot,z,\nu) &= \frac{1}{z}\int_{[0,z]}\varphi_{2}(\cdot,x)\,\nu(\mathrm{d}x) - c_{1,1} + c_{2,1}z + c_{2,2}z^{2},\\
b_{2}(\cdot,0,\nu) &= - c_{1,1},\quad b_{3}(\cdot,y) = - c_{1,2} - c_{2,1} - c_{2,2}y
\end{align*}
for any $y \geq 0$, $z > 0$ and $\nu\in\mathcal{P}_{0}(\R)$, assuming that $b_{3}$ is simply defined on $\R_{+}\times\R_{+}$. Further, for $d=2$ let $\zeta\in [1,\infty[^{2}$ and $\eta\in [\frac{1}{2},\infty[^{2\times 2}$ be such that
\begin{equation*}
\quad f_{1}(\cdot,x,y) = x^{\zeta_{1}}(g_{1,1} y^{\eta_{1,1}} + g_{1,2} y^{\eta_{1,2}})\quad\text{and}\quad f_{2}(\cdot,x,y) = x^{\zeta_{2}}(g_{2,1}y^{\eta_{2,1}} + g_{2,2}y^{\eta_{2,2}})
\end{equation*}
for all $x,y\geq 0$ and some measurable locally bounded map $g\colon\R_{+}\rightarrow\R^{2\times 2}$. Then the McKean--Vlasov SDE~\eqref{eq:SIS epidemic model} takes the form
\begin{equation}\label{eq:a class of McKean--Vlasov SDEs}
\begin{split}
\mathrm{d}I_{t} &= \big(c_{0}(t) + N(t)\E\big[\varphi_{0}(t,I_{t})\big] + \big(\E\big[\varphi_{1}(t,I_{t})\big] + c_{1,1}(t)N(t) + c_{1,2}(t)N(t)^{2}\big)I_{t}\big)\,\mathrm{d}t\\
&\quad + \bigg(\frac{\E\big[\varphi_{2}(t,I_{t})\big]}{N(t)} - c_{1,1}(t) + c_{2,1}(t)N(t) + c_{2,2}(t)N(t)^{2}\bigg)I_{t}^{2}\,\mathrm{d}t\\
&\quad - \big(c_{1,2}(t) + c_{2,1}(t) + c_{2,2}(t)N(t)\big)I_{t}^{3}\,\mathrm{d}t\\
&\quad + I_{t}^{\zeta_{1}}\big(g_{1,1}(t)(N(t) - I_{t})^{\eta_{1,1}} + g_{1,2}(t)\big(N(t) - I_{t})^{\eta_{1,2}}\big)\,\mathrm{d}W_{t}^{(1)}\\
&\quad + I_{t}^{\zeta_{2}}\big(g_{2,1}(t)(N(t) - I_{t})^{\eta_{2,1}} + g_{2,2}(t)(N(t) - I_{t})^{\eta_{2,2}}\big)\,\mathrm{d}W_{t}^{(2)}
\end{split}
\end{equation}
for $t\geq 0$. We suppose that $N > 0$, $\varphi_{0}\geq 0$, $\varphi_{2}(\cdot,0) = 0$ and
\begin{equation}\label{eq:specific value condition}
\dot{N} \geq c_{0} + (\varphi_{0} + \varphi_{1} + \varphi_{2})(\cdot,x)N\quad\text{for all $x\in [0,N]$.}
\end{equation}
Then Proposition~\ref{pr:existence, uniqueness and a growth estimate} applies, since~\eqref{co:1}--\eqref{co:3} are satisfied for $\vartheta = \mathcal{W}_{1}$. Indeed, in the boundedness and Lipschitz condition~\eqref{co:1} we may take
\begin{equation}
\begin{split}\label{eq:specific regularity coefficients 1 for the drift coefficient}
\hat{b}_{0}(\cdot,y) &= c_{0} + y\max_{x\in [0,y]}\varphi_{0}(\cdot,x),\quad \hat{b}_{1}(\cdot,y) = \max_{x\in [0,y]} \varphi_{1}(\cdot,x) + c_{1,1}y + c_{1,2}y^{2},\\
\hat{b}_{2}(\cdot,z) &= \frac{1}{z}\max_{x\in [0,y]}\varphi_{2}(\cdot,x) - c_{1,1} + c_{2,1}z + c_{2,2}z^{2},\quad  \hat{b}_{3} = b_{3}
\end{split}
\end{equation}
for any $y \geq 0$ and $z > 0$. Further, for any $i\in\{0,1,2\}$ there is $L_{i}\colon\R_{+}\times\R_{+}\rightarrow\R_{+}$ such that $|\varphi_{i}(s,x) - \varphi(s,\tilde{x})| \leq L_{i}(s,y)|x - \tilde{x}|$ for any $s,y\geq 0$ and $x,\tilde{x}\in [0,y]$, and we take
\begin{equation}\label{eq:specific regularity coefficients 2 for the drift coefficient}
\hat{\lambda}_{0}(\cdot,y) = yL_{0}(\cdot,y),\quad \hat{\lambda}_{1} = L_{1},\quad \hat{\lambda}_{2}(\cdot,z) = \frac{L_{2}(\cdot,z)}{z},\quad \hat{\lambda}_{3} = 0
\end{equation}
for all $y\geq 0$ and $z > 0$. The Hölder condition~\eqref{co:2} holds, by Example~\ref{ex:sums of power functions}, and~\eqref{co:3} follows from~\eqref{eq:specific value condition}. Further, if~\eqref{eq:specific value condition} is replaced by the sharper constraint
\begin{equation}\label{eq:specific strict value condition}
\begin{split}
&\dot{N}\geq c_{0} + (\varphi_{0} + \varphi_{1} + \varphi_{2})(\cdot,x)N\\
&\quad + \frac{1}{2}N^{2\zeta_{1}}\big(g_{1,1}^{2}\mathbbm{1}_{[0,1[}(\eta_{1,1}) + g_{1,2}^{2}\mathbbm{1}_{[0,1[}(\eta_{1,2})\big) + N^{2\zeta_{1}}(g_{1,1}g_{1,2})^{+}\mathbbm{1}_{[0,2[}(\eta_{1,1} + \eta_{1,2})\\
&\quad + \frac{1}{2}N^{2\zeta_{2}}\big(g_{2,1}^{2}\mathbbm{1}_{[0,1[}(\eta_{2,1}) + g_{2,2}^{2}\mathbbm{1}_{[0,1[}(\eta_{2,2})\big) + N^{2\zeta_{2}}(g_{2,1}g_{2,2})^{+}\mathbbm{1}_{[0,2[}(\eta_{2,1} + \eta_{2,2}),
\end{split}
\end{equation}
for each $x\in [0,N]$, then~\eqref{co:4} is satisfied, according to Example~\ref{ex:power sum condition}.
\end{Example}

Proposition~\ref{pr:existence, uniqueness and a growth estimate} generalises the existence and uniqueness results of a range of widely studied SIS models. Namely, Theorem~2.1 in~\cite{WanCaiDinGui18}, Theorem~3.1 in~\cite{GraGreHuMaoPan11}, Theorem~2.1 in~\cite{CaiCaiMao19,CaiCaiMao19-2} and the first assertion of Theorem~1 in~\cite{BerLan22} emerge as special cases, as shown below.

\begin{Example}[A representative SIS epidemic model]\label{ex:a representative SIS epidemic model}
Let the setting of Example~\ref{ex:a tractable class of McKean--Vlasov SDEs} be valid and assume that $c_{0} = 0$, $c_{1,1} = \beta$, $\zeta = (1,1)$, $g_{2,2} = 0$ and
\begin{align*}
\varphi_{0}(\cdot,x) &= \beta_{0}x,\quad \varphi_{1}(\cdot,x) = - (\mu + \gamma) + (\beta_{1} - \beta_{0})x,\quad \varphi_{2}(\cdot,x) = - \beta_{1} x,
\end{align*}
for all $x\in\R$, where $\beta_{0},\beta,\gamma,\mu\colon\R_{+}\rightarrow\R_{+}$ and $\beta_{1}\colon\R_{+}\rightarrow\R$ are measurable and locally bounded. Then the McKean--Vlasov SDE~\eqref{eq:a class of McKean--Vlasov SDEs} can be written in the form
\begin{equation}\label{eq:a representative SIS epidemic model}
\begin{split}
\mathrm{d}I_{t} &= \bigg(\bigg(\beta_{0}(t)\E\big[I_{t}\big] + \bigg(\beta(t) + \beta_{1}(t)\frac{\E[I_{t}]}{N(t)}\bigg)I_{t}\bigg)(N(t) - I_{t}) - (\mu + \gamma)(t)I_{t}\bigg)\,\mathrm{d}t\\
&\quad + \big(c_{1,2}(t)N(t)^{2}I_{t} + \big(c_{2,1}(t) + c_{2,2}(t)N(t)\big)N(t)I_{t}^{2}\big)\,\mathrm{d}t\\
&\quad - \big(c_{1,2}(t) + c_{2,1}(t) + c_{2,2}(t)N(t)\big)I_{t}^{3}\,\mathrm{d}t\\
&\quad + I_{t}\big(g_{1,1}(t)(N(t) - I_{t})^{\eta_{1,1}} + g_{1,2}(t)\big(N(t) - I_{t})^{\eta_{1,2}}\big)\,\mathrm{d}W_{t}^{(1)}\\
&\quad + I_{t}g_{2,1}(t)(N(t) - I_{t})^{\eta_{2,1}}\,\mathrm{d}W_{t}^{(2)}
\end{split}
\end{equation}
for $t\geq 0$ and~\eqref{eq:specific value condition} reduces to $\dot{N} \geq - (\mu + \gamma)N$. In particular, for $\beta_{0} = c_{2,2} = 0$, $\eta_{1,1} = 1$ and $\eta_{1,2} = \eta_{2,1} = \eta_{0}$ with $\eta_{0}\in\{\frac{1}{2},1\}$, we obtain extended time- and distribution-dependent versions of the following SIS models:
\begin{enumerate}[label=(M.\arabic*), ref=M.\arabic*, leftmargin=\widthof{(M.3)} + \labelsep]
\item\label{model:1} The \emph{model by Wang, Cai, Ding and Gui} defined via the SDE~(1.13) in~\cite{WanCaiDinGui18} for $\beta(t)$ $= \beta_{e} + (\beta_{0} - \beta_{e})e^{-\theta t}$, $c_{1,2} = 0$, $c_{2,1} = 0$,
\begin{equation*}
\eta_{0} = 1,\quad g_{1,1}(t) = \frac{\xi}{\sqrt{2\theta}}\sqrt{1-e^{-2\theta t}}\quad \text{and}\quad g_{1,2}=g_{2,1}= 0
\end{equation*}
for $t\geq 0$ with $\beta_{0},\beta_{e}\geq 0$ and $\xi,\theta > 0$. Then~\eqref{eq:a representative SIS epidemic model} turns into
\begin{equation*}
\begin{split}
\mathrm{d}I_{t} &= \bigg(\bigg(\beta_{e} + (\beta_{0} - \beta_{e})e^{-\theta t} + \beta_{1}(t)\frac{\E[I_{t}]}{N(t)}\bigg)I_{t}(N(t) - I_{t}) -(\mu + \gamma)(t)I_{t}\bigg)\,\mathrm{d}t\\
&\quad + I_{t}\frac{\xi}{\sqrt{2\theta}}\sqrt{1-e^{-2\theta t}}(N(t) - I_{t})\,\mathrm{d}W_{t}^{(1)}
\end{split}
\end{equation*}
for $t\geq 0$. As $\lim_{t\uparrow\infty} \beta(t) = \beta_{e}$ and $\lim_{t\uparrow\infty}g_{1,1}(t) = \frac{\xi}{\sqrt{2\theta}}$, there is a direct relation to the SDE~\eqref{eq:standard SIS epidemic model} describing the popular model~\cite{GraGreHuMaoPan11}.

\item\label{model:2} The \emph{model by Cai, Cai and Mao} determined by the SDE~(5) in~\cite{CaiCaiMao19-2}, stemming from two correlated Brownian motions, for $c_{1,2}= c_{2,1} = 0$,
\begin{equation*}
\eta_{0} = \frac{1}{2},\quad g_{1,1}= a_{1}\sigma_{1},\quad g_{1,2} = -a_{2}\sigma_{2}\quad\text{and}\quad g_{2,1} = -a_{3}\sigma_{2},
\end{equation*}
where $a\colon\R_{+}\rightarrow\R^{3}$ and $\sigma\colon\R_{+}\rightarrow\R_{+}^{2}$ are measurable and locally bounded such that $a_{1},a_{3}\geq 0$. Then~\eqref{eq:a representative SIS epidemic model} becomes
\begin{equation*}
\begin{split}
\mathrm{d}I_{t} &= \bigg(\bigg(\beta(t) + \beta_{1}(t)\frac{\E[I_{t}]}{N(t)}\bigg)I_{t}(N(t) - I_{t}) -(\mu + \gamma)(t)I_{t}\bigg)\,\mathrm{d}t\\
&\quad + I_{t}\bigg((a_{1}\sigma_{1})(t)(N(t) -I_{t}) - (a_{2}\sigma_{2})(t)\sqrt{N(t) -I_{t}}\bigg)\,\mathrm{d}W_{t}^{(1)}\\
&\quad - I_{t}(a_{3}\sigma_{2})(t)\sqrt{N(t) -I_{t}}\,\mathrm{d}W_{t}^{(2)}
\end{split}
\end{equation*}
for $t\geq 0$. Specifically, for $a = (1,0,1)$ we obtain an extension of the SDE~(1.5) in~\cite{CaiCaiMao19}, which reduces to the SDE~(2.4) in~\cite{GraGreHuMaoPan11} if in addition $\sigma_{2} = 0$.

\item\label{model:3} The \emph{model by Bernardi and Lanconelli} given by the SDE~(1.9) in~\cite{BerLan22} for $c_{1,2}$ $= \frac{1}{2}\sigma^{2}$, $c_{2,1} = - \frac{3}{2}\sigma^{2}$,
\begin{equation*}
\eta_{0} = 1,\quad g_{1,1} = \sigma\quad\text{and}\quad g_{1,2}= g_{2,1} = 0,
\end{equation*}
for some measurable locally bounded function $\sigma\colon\R_{+}\rightarrow\R_{+}$. In this case,~\eqref{eq:a representative SIS epidemic model} is of the form
\begin{equation*}
\begin{split}
\mathrm{d}I_{t} &= \bigg(\bigg(\beta(t) + \beta_{1}(t)\frac{\E[I_{t}]}{N(t)}\bigg)I_{t}(N(t) - I_{t}) -(\mu + \gamma)(t)I_{t}\bigg)\,\mathrm{d}t\\
&\quad + \frac{1}{2}\sigma(t)^{2}I_{t}(N(t) - I_{t})(N - 2I_{t})\,\mathrm{d}t + \sigma(t)I_{t}(N(t) - I_{t})\,\mathrm{d}W_{t}^{(1)}\quad\text{for $t\geq 0$.}
\end{split}
\end{equation*}
\end{enumerate}
While in models~\eqref{model:1} and~\eqref{model:3} the constraint~\eqref{eq:specific strict value condition} agrees with $\dot{N}$ $\geq -(\mu + \gamma)N$, in~\eqref{model:2} it reduces to
\begin{equation*}
\dot{N} + (\mu + \gamma)N \geq \bigg(\frac{1}{2}\big(a_{2}^{2} + a_{3}^{2}\big)\sigma_{2} + (a_{1}a_{2})^{-}\sigma_{1}\bigg)\sigma_{2}N^{2}.
\end{equation*}
In particular, we recover the condition $\mu + \gamma$ $\geq \frac{1}{2}(a_{2}^{2} + a_{3}^{2})\sigma_{2}^{2}N$ in~\cite[Theorem 2.1]{CaiCaiMao19-2}, where $N$ is constant and $a_{1}a_{2}\geq 0$ is implicitly required.
\end{Example}

Finally, we determine the coefficients $\hat{b}_{k+1}$, $\hat{\lambda}_{k+1}$ and $\hat{b}_{k+2}$ in~\eqref{eq:regularity coefficient 1 for the drift coefficient},~\eqref{eq:regularity coefficient 2 for the drift coefficient} and~\eqref{eq:regularity coefficient 3 for the drift coefficient}.

\begin{Example}
Let the setting of Example~\ref{ex:a tractable class of McKean--Vlasov SDEs} hold and suppose that $c_{2,2},c_{2,1} + c_{1,2}\leq 0$, as in models~\eqref{model:1}--\eqref{model:3} in Example~\ref{ex:a representative SIS epidemic model}. Then Lemma~\ref{le:maximisation 1} allows us to take
\begin{align*}
&\hat{b}_{4}(\cdot,y) = \hat{b}_{1}(\cdot,y)\\
&\quad + \bigg(\max_{x\in [0,y]}\varphi_{1}(\cdot,x) - \max_{x\in [0,y]}(\varphi_{1} + 2\varphi_{2})(\cdot,x) + \big(2c_{1,1} + (3c_{1,2} + c_{2,1})y + c_{2,2}y^{2}\big)y\bigg)^{-},\\
&\hat{b}_{5}(\cdot,y) = \hat{b}_{1}(\cdot,y) + \bigg(\max_{x\in [0,y]}\varphi_{1}(\cdot,x) - \max_{x\in [0,y]}(\varphi_{1} + \varphi_{2})(\cdot,x) + \big(c_{1,1} + c_{1,2}y\big)y\bigg)^{-},\\
&\hat{\lambda}_{4}(\cdot,y) = \big(L_{0} + L_{1} + L_{2}\big)(\cdot,y)y
\end{align*}
for all $y\geq 0$ with the functions $\hat{b}_{1}$, $L_{0}$, $L_{1}$ and $L_{2}$ in~\eqref{eq:specific regularity coefficients 1 for the drift coefficient} and~\eqref{eq:specific regularity coefficients 2 for the drift coefficient}. In particular, in the setting of Example~\ref{ex:a representative SIS epidemic model} these formula reduce to
\begin{align*}
\hat{b}_{4}(\cdot,y) &= \hat{b}_{1}(\cdot,y) + \big((\beta_{1} - \beta_{0})^{+} - (\beta_{0} + \beta_{1})^{-} + 2\beta + (3c_{1,2} + c_{2,1})y + c_{2,2}y^{2}\big)^{-}y,\\
\hat{b}_{5}(\cdot,y) &= \hat{b}_{1}(\cdot,y) + \big((\beta_{1} - \beta_{0})^{+} - \beta_{0}^{-} + \beta + c_{1,2}y\big)^{-}y,\\
\hat{\lambda}_{4}(\cdot,y) &= \big(|\beta_{0}| + |\beta_{1} - \beta_{0}| + |\beta_{1}|\big)y
\end{align*}
for any $y\geq 0$. If in addition, $\beta_{0} = c_{2,2} = 0$, $c_{1,2} \geq 0$ and $3c_{1,2} + c_{2,1}\geq 0$, as in~\eqref{model:1}--\eqref{model:3}, then $\hat{b}_{4} = \hat{b}_{5} = \hat{b}_{1}$ as soon as $\beta_{1} \geq -2\beta$.
\end{Example}

\subsection{A probabilistic analysis in arbitrary higher moments}\label{se:2.2}
 
In this section, we consider the case $p\geq 2$ in~\eqref{eq:domination condition} and set $c_{p} := \frac{p-1}{2}$. First, we impose a \emph{Lipschitz continuity condition on compact sets} on $f$, implying~\eqref{co:2}.
\begin{enumerate}[label=(C.\arabic*), ref=C.\arabic*, leftmargin=\widthof{(C.5)} + \labelsep]
\setcounter{enumi}{4}
\item\label{co:5} For any $n\in\N$ there is a measurable locally bounded function $\lambda_{n}\colon\R_{+}\rightarrow\R_{+}$ such that
\begin{equation*}
|f(s,x,y) - f(s,\tilde{x},\tilde{y})| \leq \lambda_{n}(s)\big(|x-\tilde{x}| + |y-\tilde{y}|\big)
\end{equation*}
for all $s\geq 0$ and $x,\tilde{x},y,\tilde{y}\in [0,n]$.
\end{enumerate}

Assuming~\eqref{co:5}, the diffusion coefficient~\eqref{eq:diffusion coefficient} of~\eqref{eq:SIS epidemic model} satisfies the following \emph{Lipschitz continuity condition}:
\begin{equation}\label{eq:Lipschitz condition on the diffusion coefficient}
|f(s,x,N(s) - x) - f(s,\tilde{x},N(s)-\tilde{x})| \leq  \lambda(s,N(s))|x-\tilde{x}|
\end{equation}
for all $s\geq 0$ and $x,\tilde{x}\in [0,N(s)]$, where the measurable and locally bounded function $\lambda\colon\R_{+}\times\R_{+}\rightarrow\R_{+}$ can be defined by $\lambda(\cdot,y) := 2\lambda_{\lceil y \rceil}$ for $y > 0$ and $\lambda(\cdot,0) := 0$.

Based on these considerations, from Corollary~3.5 and Theorem~4.6 in~\cite{KalMeyPro24} we may infer \emph{uniqueness} and a \emph{quantitative $L^{p}$-comparison estimate}.

\begin{Proposition}\label{pr:pathwise uniqueness 2}
Let $p\geq 2$ and~\eqref{co:1} and~\eqref{co:5} hold. Then pathwise uniqueness holds for~\eqref{eq:SIS epidemic model}, and any two solutions $I$ and $\tilde{I}$ to~\eqref{eq:SIS epidemic model} satisfy
\begin{equation*}
\E\big[|I_{t} - \tilde{I}_{t}|^{p}\big] \leq e^{\int_{0}^{t}\overline{\lambda}_{p}(s,N(s))\,\mathrm{d}s}\E\big[|I_{0} - \tilde{I}_{0}|^{p}\big]
\end{equation*}
for all $t\geq 0$ with $\overline{\lambda}_{p} := p(\hat{b}_{k+1} + \hat{\lambda}_{k+1} + c_{p}\lambda^{2})$. In particular, the integrability of $\overline{\lambda}_{p}^{+}(\cdot,N)$ entails that $\sup_{t\geq 0}\E[|I_{t} - \tilde{I}_{t}|^{p}] < \infty$. In this case,
\begin{equation*}
\int_{0}^{\infty}\overline{\lambda}_{p}^{-}(s,N(s))\,\mathrm{d}s = \infty\quad\text{implies}\quad \lim_{t\uparrow\infty} \E\big[|I_{t} - \tilde{I}_{t}|^{p}\big] = 0.
\end{equation*}
\end{Proposition}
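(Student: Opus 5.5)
The plan follows the $p=1$ case in Proposition~\ref{pr:pathwise uniqueness}. We pass to the extended McKean--Vlasov SDE~\eqref{eq:extended McKean--Vlasov SDE}, whose coefficients are defined on all of $\R_{+}\times\R\times\mathcal{P}_{p}(\R)$ and $\R_{+}\times\R$. Any two solutions $I,\tilde{I}$ of~\eqref{eq:SIS epidemic model} solve~\eqref{eq:extended McKean--Vlasov SDE}, take values in $[0,N]$, and have laws in $\mathcal{P}_{0}(\R)$ supported in $[0,N(t)]$. It therefore suffices to verify the hypotheses of Corollary~3.5 and Theorem~4.6 in~\cite{KalMeyPro24} for~\eqref{eq:extended McKean--Vlasov SDE} on these solutions and read off the estimate.

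First, the drift: a partial (one-sided) Lipschitz condition in $L^{p}$ form is needed, namely
\[
(x-\tilde{x})|x-\tilde{x}|^{p-2}\big(\overline{B}(s,x,\nu)-\overline{B}(s,\tilde{x},\tilde{\nu})\big)\leq \hat{b}_{k+1}|x-\tilde{x}|^{p}+\hat{\lambda}_{k+1}|x-\tilde{x}|^{p-1}\vartheta(\nu,\tilde{\nu}).
\]
This follows directly from~\eqref{eq:partial Lipschitz condition on the drift coefficient} after multiplying by $|x-\tilde{x}|^{p-1}$. The estimate~\eqref{eq:partial Lipschitz condition on the drift coefficient} itself comes from splitting into $b_{i}(\nu)(x^{i}-\tilde{x}^{i})$, handled by the mean value theorem and~\eqref{eq:regularity coefficient 1 for the drift coefficient}, plus $(b_{i}(\nu)-b_{i}(\tilde{\nu}))\tilde{x}^{i}$, handled by~\eqref{co:1} with $\tilde{x}\le N$. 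Second, the diffusion: \eqref{co:5} gives~\eqref{eq:Lipschitz condition on the diffusion coefficient} with constant $\lambda(s,N(s))$.

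Applying Itô's formula to $|I-\tilde{I}|^{p}$ produces the generator bound
\[
p|x-\tilde{x}|^{p-2}\big((x-\tilde{x})\Delta b+c_{p}|\Delta f|^{2}\big)\leq p(\hat{b}_{k+1}+c_{p}\lambda^{2})|x-\tilde{x}|^{p}+p\hat{\lambda}_{k+1}|x-\tilde{x}|^{p-1}\vartheta.
\]
Using~\eqref{eq:domination condition}, we bound $\vartheta(\mathcal{L}(I_{t}),\mathcal{L}(\tilde{I}_{t}))\le \mathcal{W}_{p}\le \E[|I_{t}-\tilde{I}_{t}|^{p}]^{1/p}$. Hölder's inequality then gives $\E[|\Delta|^{p-1}]\,\mathcal{W}_{p}\le \E[|\Delta|^{p}]$.

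With $u(t)=\E[|I_{t}-\tilde{I}_{t}|^{p}]$, we get $u(t)\le u(0)+\int_{0}^{t}\overline{\lambda}_{p}(s,N(s))u(s)\,\mathrm{d}s$ in differential form. The stochastic integral is a true martingale because all processes are bounded by $N$ on compacts, using a localisation if needed. Grönwall's inequality, valid for signed locally integrable rates in the differential form, yields the estimate. Pathwise uniqueness follows by taking $I_{0}=\tilde{I}_{0}$, then upgrading from $\E|I_{t}-\tilde{I}_{t}|^{p}=0$ to indistinguishability by path continuity.

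The tail statements are elementary. If $\overline{\lambda}_{p}^{+}(\cdot,N)$ is integrable, the exponent is bounded above by $\int_{0}^{\infty}\overline{\lambda}_{p}^{+}$. If in addition $\int_{0}^{\infty}\overline{\lambda}_{p}^{-}=\infty$, the exponent tends to $-\infty$.

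The main obstacle is matching the hypotheses of~\cite{KalMeyPro24} precisely. The measure argument there lives on $\mathcal{P}_{p}(\R)$, so we must check that $\overline{b}_{i}$ from~\eqref{eq:extended functions} inherits~\eqref{co:1} via the pushforward under the 1-Lipschitz map $b_{k+1}(\cdot,y)$. This holds because that pushforward does not increase $\mathcal{W}_{p}$. Moreover, the comparison is only needed along laws supported in $[0,N]$, where the pushforward is the identity.
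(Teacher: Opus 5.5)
Your proposal is correct and follows essentially the paper's route. Like the paper, you pass to the extended equation, check the one-sided Lipschitz bound for the drift (via $\vartheta\le\mathcal{W}_{p}$ and the $1$-Lipschitz pushforward) and the Lipschitz bound for the diffusion, and then use Corollary~3.5 and Theorem~4.6 of~\cite{KalMeyPro24} for $Y=I-\tilde{I}$.

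Your explicit It\^o--H\"older--Gr\"onwall computation unpacks what Theorem~4.6 delivers, and it correctly yields $\overline{\lambda}_{p}=p(\hat{b}_{k+1}+\hat{\lambda}_{k+1}+c_{p}\lambda^{2})$. Deducing pathwise uniqueness directly from the $L^{p}$ estimate, which is legitimate because solutions of~\eqref{eq:SIS epidemic model} are bounded by $N$, takes the place of the paper's appeal to Corollary~3.5.
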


Next,~\eqref{co:5} and the hypothesis $f(\cdot,0,y) = 0$ for all $y\geq 0$ from~\eqref{co:3} yield a \emph{linear growth condition} for the diffusion coefficient of~\eqref{eq:SIS epidemic model}. Specifically,
\begin{equation}\label{eq:linear growth condition on the diffusion coefficient}
|f(s,x,N(s)-x)| \leq l(s,N(s))x
\end{equation}
for all $s\geq 0$ and $x\in [0,N(s)]$, where the measurable and locally bounded function $l\colon\R_{+}\times\R_{+}\rightarrow\R_{+}$ is given by $l(\cdot,y) := \lambda_{\lceil y \rceil}$ for $y > 0$ and $l(\cdot,0) := 0$, for instance.

\begin{Example}\label{ex:sums of power functions 2}
Let the representation in Example~\ref{ex:sums of power functions} hold with $\zeta,\eta\in [1,\infty[^{d\times m}$. Then~\eqref{co:5} is valid and feasible choices for the functions in~\eqref{eq:Lipschitz condition on the diffusion coefficient} and~\eqref{eq:linear growth condition on the diffusion coefficient} are
\begin{align}\label{eq:specific Lipschitz coefficient for the diffusion coefficient}
\lambda(\cdot,y) &= \max_{x\in [0,y]}\bigg(\sum_{i=1}^{d}\bigg(\sum_{j=1}^{m}g_{i,j}\big(\zeta_{i,j}x^{\zeta_{i,j} - 1}(y-x)^{\eta_{i,j}} - \eta_{i,j}x^{\zeta_{i,j}}(y-x)^{\eta_{i,j}-1}\big)\bigg)^{2}\bigg)^{\frac{1}{2}},\\\label{eq:specific linear growth coefficient for the diffusion coefficient}
l(\cdot,y) &= \max_{x\in [0,y]} \bigg(\sum_{i=1}^{d}\bigg(\sum_{j=1}^{m}g_{i,j} x^{\zeta_{i,j} - 1}(y-x)^{\eta_{i,j}}\bigg)^{2}\bigg)^{\frac{1}{2}}
\end{align}
for any $y\geq 0$, as the mean value theorem shows in the first case.
\end{Example}

Theorems~3.24 and~4.6 in~\cite{KalMeyPro24}, together with Proposition~3.3 in~\cite{BriGraKal24}, yield another \emph{existence and uniqueness result} with an \emph{explicit $L^{p}$-growth estimate}.

\begin{Proposition}\label{pr:existence, uniqueness and a growth estimate 2}
Let $p\geq 2$ and~\eqref{co:1}, \eqref{co:3} and \eqref{co:5} be satisfied. Then there is a unique strong solution $I$ to~\eqref{eq:SIS epidemic model} such that $I_{0} = \xi_{0}$ a.s.~and
\begin{equation}\label{eq:pth moment growth estimate}
\E\big[I_{t}^{p}\big] \leq e^{\int_{0}^{t}\overline{l}_{p}(s,N(s))\,\mathrm{d}s}\E\big[\xi_{0}^{p}\big] + \int_{0}^{t}e^{\int_{s}^{t}\overline{l}_{p}(\tilde{s},N(\tilde{s}))\,\mathrm{d}\tilde{s}}\hat{b}_{0}(s,N(s))\,\mathrm{d}s
\end{equation}
for all $t\geq 0$ with $\overline{l}_{p} := (p-1)\hat{b}_{0} + p(\hat{b}_{k+2} + c_{p}l^{2})$. Furthermore, the following two assertions hold:
\begin{enumerate}[(i)]
\item The integrability of $\overline{l}_{p}^{+}(\cdot,N)$ and $\hat{b}_{0}(\cdot,N)$ implies that $\sup_{t\geq 0}\E[I_{t}^{p}]$ is finite. If in addition $\int_{0}^{\infty}\overline{l}_{p}^{-}(s,N(s))\,\mathrm{d}s = \infty$, then $\lim_{t\uparrow\infty} \E[I_{t}^{p}] = 0$.

\item If $\xi_{0}  = i_{0}$ a.s.~and~\eqref{co:4} is valid, then $0 < I < N$ a.s.
\end{enumerate}
\end{Proposition}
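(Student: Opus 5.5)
We would follow the route of Proposition~\ref{pr:existence, uniqueness and a growth estimate}, working with the extended McKean--Vlasov SDE~\eqref{eq:extended McKean--Vlasov SDE} on all of $\R$.

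\textbf{Step 1: strong solution of the extended equation.} Under~\eqref{co:1} the extended drift $\sum_{i}\overline{b}_{i}(t,N(t),\nu)(x^{+}\wedge N(t))^{i}$ satisfies, on $\R\times\mathcal{P}_{p}(\R)$, the partial Lipschitz condition~\eqref{eq:partial Lipschitz condition on the drift coefficient}. This holds because $x\mapsto x^{+}\wedge N(t)$ is $1$-Lipschitz and monotone, and because $\vartheta(\nu\circ b_{k+1}(\cdot,y)^{-1},\tilde\nu\circ b_{k+1}(\cdot,y)^{-1})\le\mathcal{W}_{p}(\nu,\tilde\nu)$ by~\eqref{eq:domination condition}. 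By~\eqref{co:5}, $\overline{f}$ is Lipschitz in $x$ with constant $\lambda(\cdot,N)$. The extended drift is also bounded by locally integrable functions, since the truncation keeps $x$ in $[0,N(t)]$ and~\eqref{co:1} bounds $b_{i}$ on bounded sets. Theorem~3.24 and Theorem~4.6 of~\cite{KalMeyPro24} therefore give a unique strong solution $\overline{I}$ of~\eqref{eq:extended McKean--Vlasov SDE} with $\overline{I}_{0}=\xi_{0}$ and finite $p$th moments, together with pathwise uniqueness.

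\textbf{Step 2: viability and identification with~\eqref{eq:SIS epidemic model}.} We use~\eqref{co:3} and Proposition~3.3 of~\cite{BriGraKal24} exactly as in Proposition~\ref{pr:existence, uniqueness and a growth estimate}. At $x=0$ the drift is $b_{0}\ge0$ and $\overline f=0$. At $x=N(t)$ the drift is at most $\dot N$ and the diffusion vanishes. The value analysis then shows $0\le\overline{I}\le N$ a.s. Consequently $\mathcal{L}(\overline{I}_{t})\in\mathcal{P}_{0}(\R)$ with support in $[0,N(t)]$, so $\overline{b}_{i}=b_{i}$ along the solution and $\overline{I}$ solves~\eqref{eq:SIS epidemic model}. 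Uniqueness follows from Proposition~\ref{pr:pathwise uniqueness 2}. Assertion~(ii) is the same positivity argument under~\eqref{co:4} as in Proposition~\ref{pr:existence, uniqueness and a growth estimate}(ii), via~\cite{BriGraKal24}, since the relevant conditions do not depend on $p$.

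\textbf{Step 3: the growth estimate~\eqref{eq:pth moment growth estimate}.} We apply It\^o's formula to $I^{p}$; this is legitimate because $0\le I\le N$ makes the martingale part a true martingale. Using~\eqref{eq:partial growth condition on the drift coefficient} and~\eqref{eq:linear growth condition on the diffusion coefficient}, we get
\[
\mathrm{d}\E[I_{t}^{p}]\le \E\big[pI_{t}^{p-1}\hat b_{0}+p(\hat b_{k+2}+c_{p}l^{2})I_{t}^{p}\big]\,\mathrm{d}t .
\]
Young's inequality gives $pI^{p-1}\le (p-1)I^{p}+1$, and since $\hat b_{0}\ge b_{0}\ge0$ we obtain
\[
\tfrac{\mathrm{d}}{\mathrm{d}t}\E[I_{t}^{p}]\le \overline{l}_{p}(t,N(t))\E[I_{t}^{p}]+\hat b_{0}(t,N(t)) .
\]
Gronwall's inequality in its variation-of-constants form then yields~\eqref{eq:pth moment growth estimate}.

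\textbf{Step 4: assertion (i).} Boundedness is immediate once $\overline{l}_{p}^{+}(\cdot,N)$ and $\hat b_{0}(\cdot,N)$ are integrable. For the limit, the first term tends to $0$ when $\int_{0}^{\infty}\overline l_{p}^{-}=\infty$. The integral term tends to $0$ by dominated convergence: split it at a large $s_{0}$, use the integrability of $\hat b_{0}$ for the tail, and use the exponential factor going to zero for the part on $[0,s_{0}]$.

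The main obstacle is Step~1. One must check that the hypotheses of the cited theorems in~\cite{KalMeyPro24}, namely the partial Lipschitz, growth and measurability conditions on $\R\times\mathcal{P}_{p}(\R)$, really hold for the truncated coefficients. The delicate point is that the $\mathcal{W}_{p}$-Lipschitz control of the law argument passes through the push-forward by $b_{k+1}(\cdot,y)$ and the domination~\eqref{eq:domination condition}.
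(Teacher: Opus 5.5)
Your proposal is correct and follows the paper's proof. You solve the extended equation~\eqref{eq:extended McKean--Vlasov SDE} via \cite[Theorem~3.24]{KalMeyPro24}, confine the solution to $[0,N]$, get uniqueness from Proposition~\ref{pr:pathwise uniqueness 2} and obtain~(ii) via \cite[Proposition~3.3]{BriGraKal24}. For the moment bound you use It\^o's formula, the Young inequality $pI^{p-1}\le (p-1)I^{p}+1$ with $\hat{b}_{0}(\cdot,N)\ge 0$, and Gronwall, which is precisely the computation the paper imports from \cite[Theorem~4.6]{KalMeyPro24}.

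The one inaccuracy is in Step~2. The paper obtains $0\le\overline{I}\le N$ not from \cite{BriGraKal24}, which it invokes only for~(ii), since its condition~(V.5) is verified there via~\eqref{co:4}. Instead it uses a Yamada--Watanabe comparison principle \cite[Proposition~2.18]{KarShr91}. This works because, for all $x\le 0$, the truncated drift equals $\overline{b}_{0}(\cdot,N,\nu)\ge 0$ and the diffusion $\overline{f}(\cdot,x,N-x)$ vanishes, so the zero process is a subsolution; the same argument applies symmetrically to $N-\overline{I}$ through~\eqref{eq:transformed McKean--Vlasov SDE}.
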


Since Propositions~\ref{pr:pathwise uniqueness 2} and~\ref{pr:existence, uniqueness and a growth estimate 2} are applicable to Example~\ref{ex:a tractable class of McKean--Vlasov SDEs}, we derive explicit expressions for the coefficients $\lambda$ and $l$ in~\eqref{eq:specific Lipschitz coefficient for the diffusion coefficient} and~\eqref{eq:specific linear growth coefficient for the diffusion coefficient}.

\begin{Example}
Let the setting of Example~\ref{ex:a tractable class of McKean--Vlasov SDEs} hold when $\zeta = (1,1)$ and there is $\eta_{0}\geq 1$ such that $\eta_{i,j} = \eta_{0}$ for all $i,j\in\{1,2\}$ with $g_{i,j}\neq 0$. Then~\eqref{co:5} holds and we have
\begin{equation*}
\lambda(\cdot,y) = \big((g_{1,1} + g_{1,2})^{2} + (g_{2,1} + g_{2,2})^{2}\big)^{\frac{1}{2}}y^{\eta_{0}}\quad\text{and}\quad l = \lambda
\end{equation*}
for all $y\geq 0$. In particular, the formulas are valid in models~\eqref{model:1}--\eqref{model:3} from Example~\ref{ex:a representative SIS epidemic model}, provided that $\sigma_{2} = 0$ in~\eqref{model:2}, as otherwise~\eqref{co:5} may be violated.
\end{Example}

\section{Pathwise asymptotic behaviour}\label{se:3}

\subsection{Epidemiological notions and preliminary almost sure identities}\label{se:3.1}

We characterise the long-term behaviour of the epidemic in terms of the following three notions of pathwise asymptotics, assuming that $N$ takes positive values only.

\begin{Definition}
Let $I$ be a solution to~\eqref{eq:SIS epidemic model} with positive paths and $\rho,x_{0} > 0$.
\begin{enumerate}[(i)]
\item We say that $I$ becomes \emph{extinct with exponential rate $\rho$} if 
\begin{equation}\label{eq:extinction with an exponential rate}
\limsup_{t\uparrow\infty} \frac{1}{t^{\rho}}\log(I_{t}) < 0\quad\text{a.s.}
\end{equation}
In the case $\rho = 1$ we will simply say that $I$ becomes \emph{exponentially extinct}.

\item The solution $I$ is said to \emph{persist above $x_{0}$} if $\limsup_{t\uparrow\infty}I_{t}\geq x_{0}$ a.s. Furthermore, \emph{persistence around $x_{0}$} holds for $I$ if
\begin{equation}\label{eq:persistence around a point}
\liminf_{n\uparrow\infty} I_{t} \leq x_{0} \leq \limsup_{t\uparrow\infty}I_{t} \quad\text{a.s}
\end{equation}
\end{enumerate}
\end{Definition}

Our analysis relies on a \emph{linear growth condition on compact sets} for $f$, which follows immediately from~\eqref{co:5} and the requirement $f(\cdot,0,y) = 0$ for all $y\geq 0$ in~\eqref{co:3}.
\begin{enumerate}[label=(C.\arabic*), ref=C.\arabic*, leftmargin=\widthof{(C.6)} + \labelsep]
\setcounter{enumi}{5}
\item\label{co:6} For each $n\in\N$ there is a measurable locally bounded function $l_{n}\colon\R_{+}\rightarrow\R_{+}$ such that $|f(s,x,y)| \leq l_{n}(s) x$ for any $s\geq 0$ and $x,y\in [0,n]$.
\end{enumerate}

From~\eqref{co:6} we obtain the \emph{linear growth condition}~\eqref{eq:linear growth condition on the diffusion coefficient}, where the measurable and locally bounded function $l\colon\R_{+}\times\R_{+}\rightarrow\R_{+}$ is defined by $l(\cdot,y) := l_{\lceil y\rceil}$ for $y > 0$ and $l(\cdot,0) := 0$.

Now It{\^o}'s formula and the strong law of large numbers for continuous local martingales in~\cite[Theorem~1]{Lip80} imply a \emph{preliminary pathwise asymptotic result}. This involves the measurable function $h\colon\R_{+}\times\{(x,y)\in\R_{+}\times\R_{+}\mid 0 < x\leq y\}\times\mathcal{P}_{0}(\R)\rightarrow\R$,
\begin{equation}\label{eq:transformed function}
h(s,x,y,\nu):= \sum_{i=0}^{k}b_{i}(s,y,\nu)x^{i-1} - \frac{1}{2x^{2}}|f(s,x,y-x)|^{2}.
\end{equation}

\begin{Lemma}\label{le:pathwise asymptotic identities}
Let~\eqref{co:6} hold and $\rho > 0$ be such that
\begin{equation}\label{eq:integrability condition}
\int_{0}^{\infty}\frac{l(s,N(s))^{2}}{1 + s^{2\rho}}\,\mathrm{d}s < \infty.
\end{equation}
Then any solution $I$ to~\eqref{eq:SIS epidemic model} with positive paths satisfies
\begin{align*}
\liminf_{t\uparrow\infty}\frac{1}{t^{\rho}}\log(I_{t}) &= \liminf_{t\uparrow\infty} \frac{1}{t^{\rho}}\int_{0}^{t}h\big(s,I_{s},N(s),\mathcal{L}(I_{s})\big)\,\mathrm{d}s\\
 &\leq \limsup_{t\uparrow\infty} \frac{1}{t^{\rho}}\int_{0}^{t}h\big(s,I_{s},N(s),\mathcal{L}(I_{s})\big)\,\mathrm{d}s = \limsup_{t\uparrow\infty}\frac{1}{t^{\rho}}\log(I_{t})\quad\text{a.s.}
\end{align*}
\end{Lemma}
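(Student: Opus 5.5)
Since $I$ has positive paths and $I\leq N$, we apply Itô's formula to $\log(I_{t})$. Writing $M_{t} := \int_{0}^{t} I_{s}^{-1} f(s,I_{s},N(s)-I_{s})\,\mathrm{d}W_{s}$, we obtain a.s.
\begin{equation*}
\log(I_{t}) = \log(I_{0}) + \int_{0}^{t} h\big(s,I_{s},N(s),\mathcal{L}(I_{s})\big)\,\mathrm{d}s + M_{t}\quad\text{for all $t\geq 0$.}
\end{equation*}
Indeed, the drift of $\log(I)$ is $I^{-1}\sum_{i}b_{i}I^{i} - \frac{1}{2}I^{-2}|f|^{2}$, which is exactly the function $h$ in~\eqref{eq:transformed function}. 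If $I_{0}$ is not bounded away from zero, we can localise with the stopping times $\inf\{t\geq 0\mid I_{t}\leq 1/n\}$. Since the paths are positive and continuous, these stopping times tend to infinity, so the identity holds globally.

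It therefore suffices to show that $t^{-\rho}M_{t}\to 0$ a.s.\ as $t\uparrow\infty$, because $t^{-\rho}\log(I_{0})\to 0$ trivially. Then $\liminf$ and $\limsup$ of $t^{-\rho}\log(I_{t})$ coincide with those of $t^{-\rho}\int_{0}^{t}h\,\mathrm{d}s$, and the middle inequality in the statement is immediate.

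The key step is the strong law of large numbers for continuous local martingales, \cite[Theorem~1]{Lip80}. We use it in the following form: for a continuous local martingale $M$ and an increasing positive function $a$ with $a(t)\to\infty$, if $\int_{0}^{\infty}(1+a(s))^{-2}\,\mathrm{d}\langle M\rangle_{s}<\infty$ a.s., then $M_{t}/a(t)\to 0$ a.s. We take $a(t) = (1+t^{2\rho})^{1/2}\sim t^{\rho}$, or $a(t)=1+t^{\rho}$, comparable up to constants. By the linear growth condition~\eqref{eq:linear growth condition on the diffusion coefficient}, which follows from~\eqref{co:6}, we have
\begin{equation*}
\mathrm{d}\langle M\rangle_{s} = I_{s}^{-2}|f(s,I_{s},N(s)-I_{s})|^{2}\,\mathrm{d}s \leq l(s,N(s))^{2}\,\mathrm{d}s.
\end{equation*}
Hence the required condition reduces to the deterministic integrability~\eqref{eq:integrability condition}, using $(1+t^{\rho})^{2}\geq 1+t^{2\rho}$.

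The main obstacle is only bookkeeping. First, the convergence $M_{t}/a(t)\to0$ has to be transferred to $t^{-\rho}$, which is harmless since $a(t)/t^{\rho}\to1$. Second, Lipster's theorem must be applied in its version with a general normalising function; otherwise a time change $s\mapsto s^{2\rho}$ reduces it to the usual version. If $\langle M\rangle_{\infty}<\infty$ on some event, then $M$ converges there and the conclusion is trivial on that event, which is also covered by the theorem.
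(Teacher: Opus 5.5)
Your proposal is correct and follows the paper's proof: you apply It\^o's formula to $\log(I)$, bound $\mathrm{d}\langle M\rangle_s\leq l(s,N(s))^{2}\,\mathrm{d}s$ via \eqref{co:6}, and use $(1+s^{\rho})^{2}\geq 1+s^{2\rho}$ so that Liptser's strong law gives $M_t/t^{\rho}\to 0$ a.s. The paper states the same requirement as finiteness of $\langle\int_0^\cdot(1+s^{\rho})^{-1}\,\mathrm{d}M_s\rangle_\infty$, which is exactly your condition $\int_0^\infty(1+s^{\rho})^{-2}\,\mathrm{d}\langle M\rangle_s<\infty$.
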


\begin{Example}
Assuming the representation of $f$ from Example~\ref{ex:sums of power functions}, it follows readily from Example~\ref{ex:sums of power functions 2} that~\eqref{eq:integrability condition} is satisfied whenever
\begin{equation}\label{eq:integrability condition 2}
\int_{0}^{\infty}\frac{g_{i,j}(s)^{2}}{1 + s^{2\rho}}N(s)^{2(\zeta_{i,j} + \eta_{i,j} - 1)}\,\mathrm{d}s < \infty
\end{equation}
for all $i\in\{1,\dots,d\}$ and $j\in\{1,\dots,m\}$. If $N$ is bounded, then~\eqref{eq:integrability condition 2} holds once there is $\alpha_{i,j} < \rho - \frac{1}{2}$ such that $g_{i,j}(s) = O(s^{\alpha_{i,j}})$ as $s\uparrow\infty$, or more specifically, 
\begin{equation*}
\text{$g_{i,j}$ is bounded}\quad\text{and}\quad \rho > \frac{1}{2}.
\end{equation*}
For the time- and distribution-dependent SIS models~\eqref{model:1}--\eqref{model:3} in Example~\ref{ex:a representative SIS epidemic model}, the integrability condition~\eqref{eq:integrability condition 2} takes the following respective forms:
\begin{enumerate}[(1)]
\item $\int_{0}^{\infty}\frac{1 - e^{-2\theta s}}{1 + s^{2\rho}}N(s)^{2}\,\mathrm{d}s < \infty$.

\item $\int_{0}^{\infty}\frac{a_{1}(s)^{2}\sigma_{1}(s)^{2}}{1 + s^{2\rho}}N(s)^{2}\,\mathrm{d}s < \infty$ and $\int_{0}^{\infty}\frac{a_{i}(s)^{2}\sigma_{2}(s)^{2}}{1 + s^{2\rho}}N(s)\,\mathrm{d}s < \infty$ for $i\in\{2,3\}$.

\item $\int_{0}^{\infty}\frac{\sigma(s)^{2}}{1 + s^{2\rho}}N(s)^{2}\,\mathrm{d}s < \infty$.
\end{enumerate}
In particular, for $\rho > \frac{1}{2}$ these conditions are satisfied in the original models~\cite{WanCaiDinGui18, CaiCaiMao19-2, BerLan22}, which consist of autonomous SDEs.
\end{Example}

Lastly, we provide the explicit form of the function~\eqref{eq:transformed function} for the generalised SIS models.

\begin{Example}\label{ex:transformed function in the representative model}
In the setting of Example~\ref{ex:a representative SIS epidemic model} assume that $\eta_{1,1} = 1$ and $\eta_{1,2} = \eta_{2,1} = \eta_{0}$ for some $\eta_{0}\in\{\frac{1}{2},1\}$, as in~\eqref{model:1}--\eqref{model:3}. Then we compute that
\begin{equation*}
h(\cdot,x,y,\nu) = b_{0}(\cdot,y,\nu)\frac{1}{x} + h_{1}(\cdot,y,\nu) + h_{2}(\cdot,y,\nu)x  - g_{1,1}g_{1,2}(y-x)^{\frac{3}{2}}\mathbbm{1}_{\{\frac{1}{2}\}}(\eta_{0}) + h_{3}(\cdot,y)x^{2}
\end{equation*}
for all $x,y > 0$ with $x\leq y$ and $\nu\in\mathcal{P}_{0}(\R)$, where $h_{1},h_{2}\colon\R_{+}\times\R_{+}\times\mathcal{P}_{0}(\R)\rightarrow\R$ and $h_{3}\colon\R_{+}\times\R_{+}\rightarrow\R$ are defined via
\begin{align*}
h_{1}(\cdot,y,\nu) &:= b_{1}(\cdot,y,\nu) - \frac{1}{2}(g_{1,2}^{2} + g_{2,1}^{2})y\mathbbm{1}_{\{\frac{1}{2}\}}(\eta_{0})\\
&\quad - \frac{1}{2}\big((g_{1,1} + g_{1,2}\mathbbm{1}_{\{1\}}(\eta_{0}))^{2} + g_{2,1}^{2}\mathbbm{1}_{\{1\}}(\eta_{0})\big)y^{2},\\
h_{2}(\cdot,y,\nu) &:= b_{2}(\cdot,y,\nu) + \frac{1}{2}(g_{1,2}^{2} + g_{2,1}^{2})\mathbbm{1}_{\{\frac{1}{2}\}}(\eta_{0})\\
&\quad + \big((g_{1,1} + g_{1,2}\mathbbm{1}_{\{1\}}(\eta_{0}))^{2} + g_{2,1}^{2}\mathbbm{1}_{\{1\}}(\eta_{0})\big)y,\\
h_{3}(\cdot,y) &:= b_{3}(\cdot,y) - \frac{1}{2}\big((g_{1,1} + g_{1,2}\mathbbm{1}_{\{1\}}(\eta_{0}))^{2} + g_{2,1}^{2}\mathbbm{1}_{\{1\}}(\eta_{0})\big).
\end{align*}
By definition, $h(\cdot,y,y,\nu) = b_{0}(\cdot,y,\nu)\frac{1}{y} + b_{1}(\cdot,y,\nu) + b_{2}(\cdot,y,\nu)y + b_{3}(\cdot,y)y^{2} = -(\mu + \gamma)$ for all $y > 0$ and $\nu\in\mathcal{P}_{0}(\R)$. In particular, we may set
\begin{equation*}
h(\cdot,0,y,\nu) := h_{1}(\cdot,y,\nu) - g_{1,1}g_{1,2}y^{\frac{3}{2}}\mathbbm{1}_{\{\frac{1}{2}\}}(\eta_{0}).
\end{equation*}
for all $y\geq 0$ and $\nu\in\mathcal{P}_{0}(\R)$ whenever $\beta_{0} = 0$, in which case $b_{0} = 0$.
\end{Example}

\subsection{Extinction}\label{se:3.2}

From the second pathwise asymptotic equality in Lemma~\ref{le:pathwise asymptotic identities} we infer a sufficient criterion for the extinction of the epidemic.

\begin{Proposition}\label{pr:extinction}
Let~\eqref{co:6} and~\eqref{eq:integrability condition} hold for $\rho > 0$ and $I$ be a positive solution to~\eqref{eq:SIS epidemic model} that admits a measurable locally integrable function $u\colon\R_{+}\rightarrow\R$ such that
\begin{equation*}
\limsup_{t\uparrow\infty} \frac{1}{t^{\rho}}\int_{0}^{t}u(s)\,\mathrm{d}s < 0\quad\text{and}\quad \limsup_{s\uparrow\infty} \frac{1}{s^{\rho - 1}}(h(s,I_{s},N(s),\mathcal{L}(I_{s})) - u(s))\leq 0.
\end{equation*}
Then $I$ becomes extinct with exponential rate $\rho$, that is,~\eqref{eq:extinction with an exponential rate} holds. If in addition $\rho = 1$ and there is $h_{\infty}\in\R$ such that
\begin{equation}\label{eq:a limit involving the transformed function}
\lim_{(s,x,\nu)\rightarrow (\infty,0,\delta_{0})} h(s,x,N(s),\nu) = h_{\infty},
\end{equation}
then $\lim_{t\uparrow\infty}\frac{1}{t}\log(I_{t}) = h_{\infty}$ a.s.~and $h_{\infty} < 0$.
\end{Proposition}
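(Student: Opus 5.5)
The plan is to combine the second identity of Lemma~\ref{le:pathwise asymptotic identities} with a Cesàro-type comparison between $h(s,I_{s},N(s),\mathcal{L}(I_{s}))$ and $u(s)$. Write $H_{s} := h(s,I_{s},N(s),\mathcal{L}(I_{s}))$. By Lemma~\ref{le:pathwise asymptotic identities}, which applies since~\eqref{co:6} and~\eqref{eq:integrability condition} hold and $I$ is positive,
\begin{equation*}
\limsup_{t\uparrow\infty}\frac{1}{t^{\rho}}\log(I_{t}) = \limsup_{t\uparrow\infty}\frac{1}{t^{\rho}}\int_{0}^{t}H_{s}\,\mathrm{d}s\quad\text{a.s.}
\end{equation*}
We split $\int_{0}^{t}H_{s}\,\mathrm{d}s = \int_{0}^{t}u(s)\,\mathrm{d}s + \int_{0}^{t}(H_{s}-u(s))\,\mathrm{d}s$. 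The first term contributes a strictly negative $\limsup$ by hypothesis. For the second, fix $\varepsilon>0$ and a path (the hypothesis on $H-u$ is pathwise, holding for a.e.\ $\omega$). We choose $s_{0}$ such that $H_{s}-u(s)\le\varepsilon s^{\rho-1}$ for $s\ge s_{0}$. Then
\begin{equation*}
\int_{0}^{t}(H_{s}-u(s))\,\mathrm{d}s \le \int_{0}^{s_{0}}(H_{s}-u(s))\,\mathrm{d}s + \frac{\varepsilon}{\rho}t^{\rho}.
\end{equation*}
The first integral is a finite constant: on $[0,s_{0}]$ the function $H$ is bounded above, since $0<I\le N$, the diffusion term enters with a minus sign, and the $b_{i}$ are bounded on bounded sets by~\eqref{co:1}. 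Moreover, $\int_{0}^{t}H_{s}\,\mathrm{d}s$ is finite by the proof of Lemma~\ref{le:pathwise asymptotic identities} via It\^o's formula. Dividing by $t^{\rho}$ and letting $\varepsilon\downarrow0$ gives $\limsup_{t}t^{-\rho}\int_{0}^{t}H_{s}\,\mathrm{d}s\le\limsup_{t}t^{-\rho}\int_{0}^{t}u<0$, which is~\eqref{eq:extinction with an exponential rate}. The only delicate point is this integrability bookkeeping near $s=0$, together with the fact that $\rho>0$ makes $t^{-\rho}\cdot\mathrm{const}\to0$.

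For the second claim we take $\rho=1$. By the first part, $\limsup_{t}\frac{1}{t}\log I_{t}<0$, so $I_{t}\to0$ a.s., and in fact exponentially fast. We also need $\mathcal{L}(I_{t})\to\delta_{0}$ in the pseudometric $\vartheta$. By~\eqref{eq:domination condition} it suffices to show $\mathcal{W}_{p}(\mathcal{L}(I_{t}),\delta_{0})=\E[I_{t}^{p}]^{1/p}\to0$. This is the main obstacle: pathwise convergence must be upgraded to moment convergence. Dominated convergence gives it when $N$ is bounded, since $0<I\le N$. In general, I would use exactly the topology implicit in~\eqref{eq:a limit involving the transformed function}, interpreting $\nu\to\delta_{0}$ along the laws $\mathcal{L}(I_{s})$, and argue via uniform integrability, or else simply assume $N$ bounded as in the examples.

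Granting this, $H_{s}\to h_{\infty}$ a.s.\ as $s\uparrow\infty$, so the Cesàro limit gives $\frac{1}{t}\int_{0}^{t}H_{s}\,\mathrm{d}s\to h_{\infty}$. Both identities of Lemma~\ref{le:pathwise asymptotic identities} then yield $\lim_{t}\frac{1}{t}\log I_{t}=h_{\infty}$ a.s. Finally, $h_{\infty}$ equals the $\limsup$ in~\eqref{eq:extinction with an exponential rate}, which we showed to be negative, so $h_{\infty}<0$.
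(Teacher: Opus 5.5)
Your argument is essentially the paper's proof: the comparison $H_{s}<u(s)+\varepsilon s^{\rho-1}$ plus the second identity of Lemma~\ref{le:pathwise asymptotic identities} gives the first claim, and dominated convergence then yields $\vartheta(\mathcal{L}(I_{t}),\delta_{0})\to 0$, followed by a Ces\`aro argument and both identities of the lemma for the second claim (the paper likewise simply uses that $N$ is bounded in the dominated convergence step, although this is not among the stated hypotheses). The only slip is your claim that $H$ is bounded above on $[0,s_{0}]$ via~\eqref{co:1}, which is not assumed in this proposition; what actually justifies the finiteness of $\int_{0}^{s_{0}}(H_{s}-u(s))\,\mathrm{d}s$ is the pathwise local integrability of $H$ coming from It\^o's formula, which you also cite.
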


We demonstrate that Proposition~\ref{pr:extinction} covers the extinction results in the generalised SIS models by using the following conditions in the setting of Example~\ref{ex:a representative SIS epidemic model}:
\begin{enumerate}[label=(A.\arabic*), ref=A.\arabic*, leftmargin=\widthof{(A.2)} + \labelsep]
\item\label{a.1} We have $\beta_{0} = 0$, $\eta_{1,1} = 1$, $\eta_{1,2} = \eta_{2,1} = \eta_{0}$ for some $\eta_{0}\in\{\frac{1}{2},1\}$, as in \eqref{model:1}--\eqref{model:3}. Further, $g_{1,1}, g_{1,2}, g_{2,1}$ are bounded and there are $N_{\infty},\gamma_{\infty},\mu_{\infty}\geq 0$ such that
\begin{equation}\label{eq:specific limits}
\lim_{t\uparrow\infty} N(t) = N_{\infty}\quad\text{and}\quad \lim_{t\uparrow\infty} (\mu + \gamma)(t) = \mu_{\infty} + \gamma_{\infty}.
\end{equation}

\item\label{a.2} It holds that $c_{1,2} + c_{2,1} + c_{2,2}N > - \frac{1}{2}((g_{1,1} + g_{1,2}\mathbbm{1}_{\{1\}}(\eta_{0}))^{2} + g_{2,1}^{2}\mathbbm{1}_{\{1\}}(\eta_{0}))$, as in~\eqref{model:1}.
\end{enumerate}

Under the hypotheses stated below, we derive an upper bound for the function~\eqref{eq:transformed function} to obtain a condition that ensures exponential extinction in model~\eqref{model:1}.

\begin{Proposition}\label{pr:extinction 1}
In the setting of Example~\ref{ex:a representative SIS epidemic model} let~\eqref{a.1} and~\eqref{a.2} hold and suppose that if $\eta_{0} = \frac{1}{2}$, then $g_{1,1} = 0$ or $g_{1,2} = 0$. Further, let $u_{1}\in\R$ be such that
\begin{equation}\label{eq:extinction in the epidemic models 4}
\limsup_{t\uparrow\infty} (\beta + \beta_{1}^{+})(t) - \frac{1}{2}(g_{1,2}^{2} + g_{2,1}^{2})(t)\mathbbm{1}_{\{\frac{1}{2}\}}(\eta_{0}) \leq u_{1}
\end{equation}
and $c_{1,2,\infty},c_{2,1,\infty},c_{2,2,\infty},u_{2},u_{3}\in\R$ satisfy the following four conditions:
\begin{enumerate}[(i)]
\item $\lim_{t\uparrow\infty} c_{i,j}(t) = c_{i,j,\infty}$ for any $(i,j)\in\{(2,1),(1,2),(2,2)\}$.

\item $\lim_{t\uparrow\infty} (\beta_{1}^{-} - \beta)(t) + \frac{1}{2}(g_{1,2}(t)^{2} + g_{2,1}(t)^{2})\mathbbm{1}_{\{\frac{1}{2}\}}(\eta_{0}) = u_{2}$.

\item $\lim_{t\uparrow\infty} \big(g_{1,1}(t) + g_{1,2}(t)\mathbbm{1}_{\{1\}}(\eta_{0})\big)^{2} + g_{2,1}(t)^{2}\mathbbm{1}_{\{1\}}(\eta_{0}) = u_{3}$.

\item $u_{4} := c_{1,2,\infty} + c_{2,1,\infty} + c_{2,2,\infty}N_{\infty} + \frac{1}{2}u_{3}$ is positive.
\end{enumerate}
For a positive solution $I$ to~\eqref{eq:SIS epidemic model} set $u := -(\mu_{\infty} + \gamma_{\infty})$, if $u_{2} \geq (2c_{1,2,\infty} + c_{2,1,\infty})N_{\infty}$ $ +\, c_{2,2,\infty}N_{\infty}^{2}$, and
\begin{align*}
u &:= - (\mu_{\infty} + \gamma_{\infty} + u_{I}) + u_{1}N_{\infty} + \bigg(c_{1,2,\infty} - \frac{1}{2}u_{3}\bigg)N_{\infty}^{2}\\
&\quad + \frac{1}{4} \frac{\big((u_{2} + (c_{2,1,\infty} + u_{3})N_{\infty} + c_{2,2,\infty}N_{\infty}^{2})^{+}\big)^{2}}{u_{4}},
\end{align*}
otherwise, with the non-negative constant $u_{I} := \liminf_{t\uparrow\infty} \beta_{1}^{+}(t)N(t) - \beta_{1}(t)\E\big[I_{t}\big]$. Then $I$ becomes exponentially extinct if $u < 0$.
\end{Proposition}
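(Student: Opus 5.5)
The plan is to derive the statement from Proposition~\ref{pr:extinction} with $\rho = 1$ and the constant function $u$ given in the statement. Then $\limsup_{t\uparrow\infty}\frac{1}{t}\int_{0}^{t}u\,\mathrm{d}s = u < 0$, so it remains to check two things: the integrability condition~\eqref{eq:integrability condition}, and the pathwise bound $\limsup_{s\uparrow\infty} h(s,I_{s},N(s),\mathcal{L}(I_{s})) \leq u$ a.s.

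For~\eqref{eq:integrability condition}, note that $N$ is bounded because it converges by~\eqref{eq:specific limits}, and $g_{1,1},g_{1,2},g_{2,1}$ are bounded by~\eqref{a.1}. Hence the linear growth coefficient $l(\cdot,N)$ from Example~\ref{ex:sums of power functions 2} is bounded (for $\eta_{0}=\frac{1}{2}$ one has $|f(s,x,N-x)|\leq Cx$ directly). Since $\rho = 1 > \frac{1}{2}$, condition~\eqref{eq:integrability condition} holds.

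For the bound on $h$, I will use the explicit form in Example~\ref{ex:transformed function in the representative model}. Here $b_{0}=0$ because $\beta_{0}=0$, and the cross term $g_{1,1}g_{1,2}(y-x)^{3/2}$ vanishes by the hypothesis for $\eta_{0}=\frac{1}{2}$. So $h = h_{1} + h_{2}x + h_{3}x^{2}$. Write $a := g_{1,2}^{2}+g_{2,1}^{2}$, let $G$ be the expression whose limit is $u_{3}$, and put $m_{s} := \E[I_{s}]\in[0,N(s)]$. Inserting $b_{1},b_{2},b_{3}$ from Example~\ref{ex:a representative SIS epidemic model} gives
\begin{align*}
h(s,x,y,\nu) &= -(\mu+\gamma) + \big(\beta - \tfrac{1}{2}a\mathbbm{1}_{\{\frac{1}{2}\}}(\eta_{0})\big)y + \beta_{1}m_{s} + \big(c_{1,2} - \tfrac{1}{2}G\big)y^{2}\\
&\quad + \Big(-\beta_{1}\frac{m_{s}}{y} - \beta + \tfrac{1}{2}a\mathbbm{1}_{\{\frac{1}{2}\}}(\eta_{0}) + (c_{2,1}+G)y + c_{2,2}y^{2}\Big)x\\
&\quad - \big(c_{1,2}+c_{2,1}+c_{2,2}y + \tfrac{1}{2}G\big)x^{2}
\end{align*}
for $\nu = \mathcal{L}(I_{s})$. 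The only law-dependent pieces are $\beta_{1}m_{s}$ and $-\beta_{1}\frac{m_{s}}{y}x$, and I plan to estimate them in two ways depending on the case.

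In the second case (the generic formula for $u$), I will write $\beta_{1}m_{s} = \beta_{1}^{+}y - (\beta_{1}^{+}y - \beta_{1}m_{s})$ and use $-\beta_{1}m_{s}/y \leq \beta_{1}^{-}$. Then $h \leq -(\mu+\gamma) - (\beta_{1}^{+}N - \beta_{1}m_{s}) + A_{s}N + B_{s}N^{2} + V_{s}x - U_{s}x^{2}$ with $x = I_{s}\in[0,N(s)]$. Here $A_{s}$ has $\limsup$ at most $u_{1}$ by~\eqref{eq:extinction in the epidemic models 4}, $B_{s}\to c_{1,2,\infty}-\frac{1}{2}u_{3}$, $V_{s}\to v := u_{2}+(c_{2,1,\infty}+u_{3})N_{\infty}+c_{2,2,\infty}N_{\infty}^{2}$, and $U_{s}\to u_{4}>0$. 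Because $N$ is bounded, these limits are uniform in $x$. Maximising $vx - u_{4}x^{2}$ over $x\geq 0$ gives $(v^{+})^{2}/(4u_{4})$, and taking $\limsup$ together with the definition of $u_{I}$ yields $\limsup_{s} h \leq u$.

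In the first case, I will instead use $\beta_{1}m_{s}(1 - x/y) \leq \beta_{1}^{+}(y-x)$, which holds since $0\leq m_{s}\leq y$. This produces a concave quadratic upper function $F_{s}(x)$ that agrees with $h(\cdot,y,y,\cdot) = -(\mu+\gamma)$ at $x = y$. In the limit, the slope of $F_{s}$ at $x = N_{\infty}$ equals $v - 2u_{4}N_{\infty} = u_{2} - (2c_{1,2,\infty}+c_{2,1,\infty})N_{\infty} - c_{2,2,\infty}N_{\infty}^{2}\geq 0$. So the limiting quadratic is nondecreasing on $[0,N_{\infty}]$, its maximum is attained at the endpoint, and $\limsup_{s} h \leq -(\mu_{\infty}+\gamma_{\infty}) = u$.

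I expect the main obstacle to be this first case. One must check that the modified estimate of the $\beta_{1}$-terms still gives the slope condition in terms of $u_{2}$: the $x$-coefficient then involves $-\beta-\beta_{1}^{+}$ rather than $\beta_{1}^{-}-\beta$. If that fails, I would fall back on comparing $h(\cdot,x,y,\cdot)$ directly with $h(\cdot,y,y,\cdot)$ through the factor $(y-x)$. Uniformity of all limits over the random point $I_{s}\in[0,N(s)]$ must also be checked, but boundedness of $N$ and convergence of the coefficients should handle it. Proposition~\ref{pr:extinction} then gives exponential extinction.
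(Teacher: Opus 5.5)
Your second case is correct and is essentially the paper's argument. The bound $-\beta_{1}\E[I_{s}]/N\leq\beta_{1}^{-}$ amounts to replacing $h_{2}(\cdot,N,\mathcal{L}(I_{s}))$ by $h_{2}(\cdot,N,\nu_{0})$, where $\nu_{0}:=\delta_{0}$ if $\beta_{1}\geq 0$ and $\nu_{0}:=\delta_{N}$ otherwise. The term $h_{1}$ is then controlled through $u_{I}$, and maximising the concave quadratic produces $(v^{+})^{2}/(4u_{4})$. Since you maximise over all $x\geq 0$, this bound does not even use the case distinction.

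The first case, however, has a genuine gap, and it cannot be closed.

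\textbf{Where your argument breaks.} With your estimate $\beta_{1}\E[I_{s}](1-x/N)\leq\beta_{1}^{+}(N-x)$, the coefficient of $x$ in $F_{s}$ is $V_{s}-|\beta_{1}(s)|$, not $V_{s}$. So the slope of $F_{s}$ at $N(s)$ is $V_{s}-|\beta_{1}(s)|-2U_{s}N(s)$. The hypothesis $u_{2}\geq(2c_{1,2,\infty}+c_{2,1,\infty})N_{\infty}+c_{2,2,\infty}N_{\infty}^{2}$ only makes $V_{s}-2U_{s}N(s)$ asymptotically nonnegative. Your fallback fails for the same reason. Since $h(\cdot,x,N,\nu)+\mu+\gamma=-(N-x)(h_{2}(\cdot,N,\nu)+h_{3}(\cdot,N)(x+N))$, you would need $h_{2}(\cdot,N,\mathcal{L}(I_{s}))+2h_{3}(\cdot,N)N\geq 0$ asymptotically. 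The hypothesis controls only $h_{2}(\cdot,N,\nu_{0})+2h_{3}(\cdot,N)N$, and $h_{2}(\cdot,N,\mathcal{L}(I_{s}))\leq h_{2}(\cdot,N,\nu_{0})$ points the wrong way.

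\textbf{The first case is false.} Take $N=1$, $\beta=10$, $\beta_{1}=-10$, $\mu+\gamma=1$, $\eta_{0}=1$, $g_{1,1}=1$, $g_{1,2}=g_{2,1}=0$ and $c_{1,2}=c_{2,1}=c_{2,2}=0$. Then \eqref{a.1}, \eqref{a.2} and (i)--(iv) hold with $u_{1}=10$, $u_{2}=0$, $u_{3}=1$, $u_{4}=\frac{1}{2}$. The first case applies and $u=-1<0$. Proposition~\ref{pr:existence, uniqueness and a growth estimate}~(ii) gives a positive solution with $I_{0}=\frac{1}{2}$, since \eqref{co:4} holds by Example~\ref{ex:a tractable class of McKean--Vlasov SDEs}. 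Here $h(s,x,1,\mathcal{L}(I_{s}))=10(1-\E[I_{s}])(1-x)-1-\frac{1}{2}(1-x)^{2}$. Extinction would force $\E[I_{s}]\to 0$ and hence $h(s,I_{s},1,\mathcal{L}(I_{s}))\to\frac{17}{2}$. Lemma~\ref{le:pathwise asymptotic identities} would then give $\frac{1}{t}\log(I_{t})\to\frac{17}{2}>0$, a contradiction.

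\textbf{The paper's proof has the same flaw.} In its ``otherwise'' branch it makes the case distinction of Lemma~\ref{le:maximisation 1} with $h_{2}(\cdot,N,\nu_{0})$. It then concludes with the value $h(\cdot,N,N,\nu)=-(\mu+\gamma)$, which would require $h_{2}(\cdot,N,\nu)\geq-2h_{3}(\cdot,N)N$.

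\textbf{What is actually provable.} Maximising $h_{1}(\cdot,N,\mathcal{L}(I_{s}))+h_{2}(\cdot,N,\nu_{0})x+h_{3}(\cdot,N)x^{2}$ over $[0,N]$ gives, in the first case, the bound $-(\mu_{\infty}+\gamma_{\infty}+u_{I})+(u_{1}+u_{2})N_{\infty}$. This coincides with $-(\mu_{\infty}+\gamma_{\infty})$ when $\beta_{1}(t)\to 0$ and $u_{1}$ is chosen minimally.
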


\begin{Example}[Extinction]
The assumptions of Proposition~\ref{pr:extinction 1} hold for model \eqref{model:1} if there are $N_{\infty},\gamma_{\infty},\mu_{\infty}\geq 0$ and $\beta_{1,\infty}\in\R$ satisfying~\eqref{eq:specific limits} and $\lim_{t\uparrow\infty} \beta_{1}(t) = \beta_{1,\infty}$. Then
\begin{equation*}
u_{1} = \beta_{e} + \beta_{1,\infty}^{+},\quad u_{2} = \beta_{1,\infty}^{-} - \beta_{e},\quad u_{3} = \frac{\xi^{2}}{2\theta}
\end{equation*}
are feasible constants. Here,~\eqref{eq:a limit involving the transformed function} is valid for $h_{\infty} = \beta_{e}N_{\infty} - \frac{\xi^{2}}{4\theta}N_{\infty}^{2} - (\mu_{\infty} + \gamma_{\infty})$, and we obtain that
\begin{equation*}
u = \big(\beta_{e} + \beta_{1,\infty}^{+}\big)N_{\infty} - \frac{\xi^{2}}{4\theta}N_{\infty}^{2} -(\mu_{\infty} + \gamma_{\infty} + u_{I}) + \frac{\theta}{\xi^{2}}\bigg(\bigg(\beta_{1,\infty}^{-} - \beta_{e} + \frac{\xi^{2}}{2\theta}N_{\infty}\bigg)^{+}\bigg)^{2},
\end{equation*}
if $\beta_{e} > \beta_{1,\infty}^{-}$, and $u = - (\mu_{\infty} + \gamma_{\infty})$, otherwise. In particular, in the first case we have $u < 0$ if and only if either
\begin{equation*}
\frac{\xi^{2}}{2\theta}N_{\infty} \leq \beta_{e} - \beta_{1,\infty}^{-}\quad\text{and}\quad (\beta_{e} + \beta_{1,\infty}^{+})N_{\infty} - \frac{\xi^{2}}{4\theta}N_{\infty}^{2} < \mu_{\infty} + \gamma_{\infty} + u_{I}
\end{equation*}
or $\frac{\xi^{2}}{2\theta}N_{\infty} > \beta_{e} - \beta_{1,\infty}^{-}$ and $\frac{\theta}{\xi^{2}}(\beta_{1,\infty}^{-} - \beta_{e})^{2} + |\beta_{1,\infty}|N_{\infty} < \mu_{\infty} + \gamma_{\infty} + u_{I}$. These computations explicitly show that Theorem~3.1 in~\cite{WanCaiDinGui18} is a special case of Proposition~\ref{pr:extinction}.
\end{Example}

Next, we estimate function~\eqref{eq:transformed function} from above under a different set of assumptions, which yields a sufficient condition for exponential extinction in model~\eqref{model:2}.

\begin{Proposition}\label{pr:extinction 2}
In the setting of Example~\ref{ex:a representative SIS epidemic model} let~\eqref{a.1} and $\eta_{0} = \frac{1}{2}$ be valid. Further, suppose that either
\begin{equation*}
c_{1,2} = c_{2,1} = c_{2,2} = g_{1,1} = 0
\end{equation*}
or both~\eqref{a.2} and one of the following four scenarios hold:
\begin{enumerate}[(i)]
\item $\frac{1}{2}(g_{1,2}^{2} + g_{2,1}^{2}) \geq \beta + \beta_{1}^{+} + (2c_{1,2} + c_{2,1})N  + c_{2,2}N^{2}$ and $g_{1,1}g_{1,2}\geq 0$.

\item $-\frac{3}{2}g_{1,1}g_{1,2} \geq 4(c_{1,2} + c_{2,1} + c_{2,2}N + \frac{1}{2}g_{1,1}^{2})\sqrt{N}$.

\item $\frac{1}{2}(g_{1,2}^{2} + g_{2,1}^{2}) \leq \beta - \beta_{1}^{-} -(\frac{3}{2}g_{1,1}g_{1,2} + (c_{2,1} + g_{1,1}^{2})\sqrt{N} + c_{2,2}N^{\frac{3}{2}})\sqrt{N}$.

\item If $g_{1,1} > 0$, then
\begin{align*}
\frac{1}{2}(g_{1,2}^{2} + g_{2,1}^{2}) &\geq \beta + \beta_{1}^{+} + (2c_{1,2} + c_{2,1})N + c_{2,2}N^{2}\\
&\quad + \frac{1}{2}\bigg(\frac{3}{4}\bigg)^{2}\frac{g_{1,1}^{2}g_{1,2}^{2}}{\frac{1}{2}g_{1,1}^{2} + c_{1,2} + c_{2,1} + c_{2,2}N}.
\end{align*}
\end{enumerate}
 If there are $u_{1},u_{2},u_{3}\in\R$ satisfying~\eqref{eq:extinction in the epidemic models 4},
\begin{equation}\label{eq:extinction in the epidemic models 9}
\liminf_{t\uparrow\infty} g_{1,1}(t)g_{1,2}(t) \geq u_{2}\quad\text{and}\quad \limsup_{t\uparrow\infty} c_{1,2}(t) - \frac{1}{2}g_{1,1}(t)^{2} \leq u_{3},
\end{equation}
then any positive solution $I$ to~\eqref{eq:SIS epidemic model} becomes exponentially extinct if $\mu_{\infty} + \gamma_{\infty} > 0$ and
\begin{equation}\label{eq:extinction in the epidemic models 10}
(u_{1} + u_{3}N_{\infty} - u_{2}N^{\frac{1}{2}})N_{\infty} < \mu_{\infty} + \gamma_{\infty}.
\end{equation}
\end{Proposition}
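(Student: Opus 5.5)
The plan is to apply Proposition~\ref{pr:extinction} with $\rho = 1$ and a constant function $u$, and to bound the function~\eqref{eq:transformed function} explicitly using Example~\ref{ex:transformed function in the representative model}.

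First, the hypotheses of Proposition~\ref{pr:extinction} are satisfied. Condition~\eqref{co:6} holds for the power-sum diffusion with $\zeta = (1,1)$. Since $g_{1,1},g_{1,2},g_{2,1}$ are bounded and $N$ converges by~\eqref{a.1}, the integrability condition~\eqref{eq:integrability condition} holds for $\rho = 1 > \frac{1}{2}$, by the example following Lemma~\ref{le:pathwise asymptotic identities}.

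Next, write $G := g_{1,2}^{2} + g_{2,1}^{2}$ and take $\eta_{0} = \frac{1}{2}$ and $\beta_{0} = 0$, so that $b_{0} = 0$. Inserting $b_{1}$, $b_{2}$, $b_{3}$ from Example~\ref{ex:a representative SIS epidemic model} into Example~\ref{ex:transformed function in the representative model} gives, for $0 < x \leq y$ and any $\nu$ with mean $m_{\nu}\in [0,y]$,
\begin{align*}
h(\cdot,x,y,\nu) &= -(\mu+\gamma) + \beta_{1}m_{\nu}\Big(1 - \frac{x}{y}\Big) + \Big(\beta - \frac{1}{2}G\Big)(y-x) + c_{1,2}y^{2} - \frac{1}{2}g_{1,1}^{2}y^{2}\\
&\quad + \big(c_{2,1}y + c_{2,2}y^{2} + g_{1,1}^{2}y\big)x + h_{3}x^{2} - g_{1,1}g_{1,2}(y-x)^{\frac{3}{2}},
\end{align*}
with $h_{3} = -(c_{1,2} + c_{2,1} + c_{2,2}y + \frac{1}{2}g_{1,1}^{2})$. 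I will use $\beta_{1}m_{\nu}(1 - x/y)\leq \beta_{1}^{+}(y-x)$ to remove the law dependence. This leaves a deterministic function $H(x)$ on $[0,y]$ with the two endpoint values
\begin{equation*}
H(y) = -(\mu+\gamma),\qquad H(0) = -(\mu+\gamma) + \Big(\beta + \beta_{1}^{+} - \frac{1}{2}G\Big)y + \Big(c_{1,2} - \frac{1}{2}g_{1,1}^{2}\Big)y^{2} - g_{1,1}g_{1,2}y^{\frac{3}{2}}.
\end{equation*}

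The central claim is that $H(x)\leq \max\{H(0),H(y)\}$ for all $x\in[0,y]$. To analyse this, I substitute $z = \sqrt{y-x}\in[0,\sqrt{y}]$, which turns $H$ into a quartic polynomial in $z$ containing a cubic term $-g_{1,1}g_{1,2}z^{3}$.

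In the degenerate case $c_{1,2} = c_{2,1} = c_{2,2} = g_{1,1} = 0$, both $h_{3}$ and the cubic term vanish. Then $H$ is affine in $x$, and the claim is immediate.

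Otherwise,~\eqref{a.2} gives $h_{3} < 0$. In each of scenarios (i)--(iv) I will check the claim by a derivative and sign argument in $z$, as follows.
\begin{enumerate}[(i)]
\item The condition together with $g_{1,1}g_{1,2}\geq 0$ should make $H$ decreasing in $x$.
\item The condition should make the cubic term dominate, so that $H$ is convex in $z$ on $[0,\sqrt{y}]$.
\item The condition should make $H$ increasing in $x$, so that $H\leq H(y)$.
\item Completing the square in the $z^{4}$, $z^{3}$, $z^{2}$ terms, where the quotient with $(\frac{3}{4})^{2}$ is exactly the discriminant-type correction, should show that $H - H(0)$ is nonpositive.
\end{enumerate}
This case analysis is the main obstacle. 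The $(y-x)^{3/2}$ term destroys concavity and monotonicity, and each scenario has to be matched precisely with the corresponding inequality for the quartic in $z$. I would first verify scenario (iv) by writing $H(0) - H$ as a quadratic form in $(z^{2},z)$ multiplied by $z^{2}$.

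Finally, I pass to the limit. Since $N(s)\to N_{\infty}$, $(\mu+\gamma)(s)\to\mu_{\infty}+\gamma_{\infty}$, and by~\eqref{eq:extinction in the epidemic models 4} and~\eqref{eq:extinction in the epidemic models 9}, we get
\begin{equation*}
\limsup_{s\uparrow\infty} h\big(s,I_{s},N(s),\mathcal{L}(I_{s})\big)\leq \max\Big\{-(\mu_{\infty}+\gamma_{\infty}),\,-(\mu_{\infty}+\gamma_{\infty}) + \big(u_{1} + u_{3}N_{\infty} - u_{2}N_{\infty}^{\frac{1}{2}}\big)N_{\infty}\Big\} =: u.
\end{equation*}
Here I use that the coefficient of $N^{3/2}$ is $-g_{1,1}g_{1,2}\leq -u_{2}$ asymptotically. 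The constant $u$ is negative precisely because $\mu_{\infty}+\gamma_{\infty} > 0$ and~\eqref{eq:extinction in the epidemic models 10} holds. Proposition~\ref{pr:extinction} with this constant $u$ then yields exponential extinction of $I$.
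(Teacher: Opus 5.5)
Your architecture is the paper's. You bound $x\mapsto h(s,x,N(s),\mathcal{L}(I_{s}))$ on $[0,N(s)]$ by its larger endpoint value, pass to the $\limsup$ via~\eqref{eq:extinction in the epidemic models 4} and~\eqref{eq:extinction in the epidemic models 9}, and apply Proposition~\ref{pr:extinction} with a constant $u$. Majorising the law term first by $\beta_{1}^{+}(y-x)$ is harmless; the paper instead keeps $\nu=\mathcal{L}(I_{s})$ and bounds $h_{1}$ afterwards. The scenarios still apply to your $H$, using $\beta+\beta_{1}^{+}\geq\beta-\beta_{1}^{-}$ in~(iii).

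The gap is in the only non-routine step, the endpoint claim, which the paper takes from Lemma~\ref{le:maximisation 1} and Proposition~\ref{pr:maximisation 3}. The mechanisms you announce for scenarios~(i)--(iv) are mis-specified, so carrying them out as stated would fail.
\begin{enumerate}[(i)]
\item Under~(i), $H$ is nondecreasing in $x$, not decreasing.
\item Under~(ii), $H$ need not be convex in $z$: its second $z$-derivative at $z=0$ is $-2B$ (with $B$ as below), which can be negative.
\item Under~(iii), the bound $H\leq H(y)$ is false in general. If $g_{1,1}g_{1,2}\geq 0$, then $H$ is nonincreasing and its maximum is $H(0)$.
\item Under~(iv), $H\leq H(y)$, while $H-H(0)\geq 0$.
\end{enumerate}

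To repair this, write $H(x)=a+bx+cx^{2}+d(y-x)^{3/2}$ with $c=h_{3}<0$ and $d=-g_{1,1}g_{1,2}$, and set $z=\sqrt{y-x}$. Then
\begin{equation*}
H'(x)=p(z):=B-\tfrac{3}{2}dz+2|c|z^{2},\qquad B:=b+2cy=\tfrac{1}{2}G-\beta-\beta_{1}^{+}-(2c_{1,2}+c_{2,1})y-c_{2,2}y^{2}.
\end{equation*}
The four scenarios now close as follows.
\begin{enumerate}[(i)]
\item This reads $B\geq 0\geq d$, so $p\geq 0$ on $[0,\sqrt{y}]$ and $H\leq H(y)=-(\mu+\gamma)$.
\item This is equivalent to $\frac{3}{4}dy^{-1/2}\geq 2|c|$. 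Hence $H''(x)=2c+\frac{3}{4}d(y-x)^{-1/2}\geq 0$, so $H$ is convex in $x$.
\item This gives $b\leq\frac{3}{2}d\sqrt{y}$, i.e.\ $H'(0)=p(\sqrt{y})\leq 0$. Since $p$ is a convex parabola, $\{p\leq 0\}$ is an interval containing $\sqrt{y}$, so $H$ first decreases and then increases in $x$.
\item This reads exactly $\min_{\R}p=B-\frac{9d^{2}}{32|c|}\geq 0$, so again $p\geq 0$ on $[0,\sqrt{y}]$ and $H\leq H(y)$. Your quadratic-form idea works here with the other endpoint: $H(y)-H(x)=z^{2}(|c|z^{2}-dz+B)\geq 0$ once $4|c|B\geq d^{2}$, which~(iv) implies.
\end{enumerate}
These are precisely the cases in which Proposition~\ref{pr:maximisation 3} returns the endpoint formula. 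With $H\leq\max\{H(0),H(y)\}$ established, the rest of your argument goes through.
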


\begin{Example}[Extinction]\label{ex:extinction 2}
In model~\eqref{model:2} each of the following four conditions implies the respective one in Proposition~\ref{pr:extinction 2}:
\begin{enumerate}[(i)]
\item $\frac{1}{2}(a_{2}^{2} + a_{3}^{2})\sigma_{2}^{2} \geq \beta + \beta_{1}^{+}$ and $a_{1}a_{2} \leq 0$.

\item $3\sgn(a_{1})a_{2}\sigma_{2} \geq 4a_{1}\sigma_{1}\sqrt{N}$.

\item $\frac{1}{2}(a_{2}^{2} + a_{3}^{2})\sigma_{2}^{2} \leq \beta - \beta_{1}^{-} + (\frac{3}{2}a_{2}\sigma_{2} - a_{1}\sigma_{1}\sqrt{N})a_{1}\sigma_{1}\sqrt{N}$.

\item $\frac{1}{2}(a_{2}^{2} + a_{3}^{2})\sigma_{2}^{2} \geq \beta + \beta_{1}^{+} + (\frac{3}{4})^{2}a_{2}^{2}\sigma_{2}^{2}$.
\end{enumerate}
Hence, let $a$ and $\sigma$ be bounded and $N_{\infty},\gamma_{\infty},\mu_{\infty}\geq 0$ satisfy~\eqref{eq:specific limits}. Then the requirements of Proposition~\ref{pr:extinction 2} are satisfied if either
\begin{equation*}
a_{1}\sigma_{1} = 0\quad\text{or both}\quad a_{1}\sigma_{1} > 0
\end{equation*}
and one of the above four conditions hold. If in addition there are $\beta_{\infty},a_{1,\infty},a_{3,\infty},\sigma_{1,\infty}$, $\sigma_{2,\infty}$ $\geq 0$ and $\beta_{1,\infty},a_{2,\infty}\in\R$ such that
\begin{equation*}
\lim_{t\uparrow\infty}\beta(t) = \beta_{\infty},\quad \lim_{t\uparrow\infty} \beta_{1}(t) = \beta_{1,\infty},\quad \lim_{t\uparrow\infty} a_{i}(t)  = a_{i,\infty},\quad \lim_{t\uparrow\infty}\sigma_{j}(t) = \sigma_{j,\infty}
\end{equation*}
for all $i\in\{1,2,3\}$ and $j\in\{1,2\}$, then for conditions~\eqref{eq:extinction in the epidemic models 4} and~\eqref{eq:extinction in the epidemic models 9} to be satisfied we can take $u_{1} = \beta_{\infty} + \beta_{1,\infty}^{+} - \frac{1}{2}(a_{2,\infty}^{2} + a_{3,\infty}^{2})\sigma_{2,\infty}^{2}$,
\begin{equation*}
u_{2} = - a_{1,\infty}a_{2,\infty}\sigma_{1,\infty}\sigma_{2,\infty}\quad\text{and}\quad u_{3} = - \frac{1}{2}a_{1,\infty}^{2}\sigma_{1,\infty}^{2}.
\end{equation*}
In this case,~\eqref{eq:a limit involving the transformed function} holds for $h_{\infty} = h_{0,\infty} - (\mu_{\infty} + \gamma_{\infty})$ with
\begin{equation*}
h_{0,\infty} : = \bigg(\beta_{\infty} - \frac{1}{2}(a_{2,\infty}^{2} + a_{3,\infty}^{2})\sigma_{2,\infty}^{2}\bigg)N_{\infty} - \frac{1}{2}a_{1,\infty}^{2}\sigma_{1,\infty}^{2}N_{\infty}^{2} + a_{1,\infty}a_{2,\infty}\sigma_{1,\infty}\sigma_{2,\infty}N_{\infty}^{\frac{3}{2}},
\end{equation*}
and~\eqref{eq:extinction in the epidemic models 10} is equivalent to the requirement that
\begin{equation}\label{eq:extinction in the epidemic models 11}
h_{0,\infty} + \beta_{1,\infty}^{+}N_{\infty} < \mu_{\infty} + \gamma_{\infty}.
\end{equation}
In particular, we recover Theorem~3.1 in~\cite{CaiCaiMao19-2}. Indeed, cases~(i)-(iv) appear there more restrictively, $\mu_{\infty}, \gamma_{\infty}, a_{1} > 0$, $\beta_{1} = \beta_{1,\infty} = 0$ and the stochastic reproduction number
\begin{equation*}
R_{0}^{S}
\end{equation*}
defined therein equals the ratio of $h_{0,\infty} + \beta_{1,\infty}^{+}N_{\infty}$ to $\mu_{\infty} + \gamma_{\infty}$ in our extended model~\eqref{model:2}. As $R_{0}^{S} < 1$ is equivalent to~\eqref{eq:extinction in the epidemic models 11}, Proposition~\ref{pr:extinction} entails Theorem 3.1 in~\cite{CaiCaiMao19-2}.
\end{Example}

We conclude with a final estimation of function~\eqref{eq:transformed function} in a suitable framework for model~\eqref{model:3}.

\begin{Lemma}\label{le:extinction 3}
In the setting of Example~\ref{ex:a representative SIS epidemic model} let~\eqref{a.1} be valid and~\eqref{a.2} be false and suppose that if $\eta_{0} = \frac{1}{2}$, then $g_{1,1} = 0$ or $g_{1,2} = 0$. If $u_{1},u_{2}\in\R$ satisfy~\eqref{eq:extinction in the epidemic models 4} and
\begin{equation}\label{eq:extinction in the epidemic models 12}
\limsup_{t\uparrow\infty} c_{1,2}(t) - \frac{1}{2}\big((g_{1,1} + g_{1,2}\mathbbm{1}_{\{1\}}(\eta_{0}))^{2} + g_{2,1}^{2}\mathbbm{1}_{\{1\}}(\eta_{0})\big) \leq u_{2},
\end{equation}
then any positive solution $I$ to~\eqref{eq:SIS epidemic model} becomes exponentially extinct whenever 
\begin{equation}\label{eq:extinction in the epidemic models 13}
\big(u_{1} + u_{2}N_{\infty}\big)^{+}N_{\infty} < \mu_{\infty} + \gamma_{\infty} + u_{I}
\end{equation}
with the non-negative constant $u_{I} := \liminf_{t\uparrow\infty} \beta_{1}^{+}(t)N(t) - \beta_{1}(t)\E\big[I_{t}\big]$.
\end{Lemma}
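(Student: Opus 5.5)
The plan is to apply Proposition~\ref{pr:extinction} with $\rho = 1$, using a constant function $u$. Condition~\eqref{co:6} holds by Example~\ref{ex:sums of power functions 2}, since $\zeta = (1,1)$ and $\eta_{0}\geq\frac{1}{2}$ (when $\eta_{0}=\frac12$ we have $|f(\cdot,x,y)|\leq C x$ on compacts anyway). Because $N$ converges and the $g_{i,j}$ are bounded, $l(\cdot,N)$ is bounded, so~\eqref{eq:integrability condition} holds for $\rho = 1$. It therefore suffices to show that
\[
\limsup_{s\uparrow\infty} h\big(s,I_{s},N(s),\mathcal{L}(I_{s})\big) \leq u := \big(u_{1} + u_{2}N_{\infty}\big)^{+}N_{\infty} - (\mu_{\infty} + \gamma_{\infty} + u_{I}) + \varepsilon
\]
for a small $\varepsilon>0$ chosen so that $u<0$, which is possible by~\eqref{eq:extinction in the epidemic models 13}.

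I would work with the explicit form of $h$ from Example~\ref{ex:transformed function in the representative model}, with $b_{0}=0$ since $\beta_{0}=0$. The cross term $-g_{1,1}g_{1,2}(y-x)^{3/2}\mathbbm{1}_{\{\frac12\}}(\eta_{0})$ vanishes by the hypothesis that $g_{1,1}=0$ or $g_{1,2}=0$. Writing $y=N(s)$, $x=I_{s}\in\,]0,y[$, $m=\E[I_{s}]$ and $G := (g_{1,1}+g_{1,2}\mathbbm{1}_{\{1\}}(\eta_{0}))^{2}+g_{2,1}^{2}\mathbbm{1}_{\{1\}}(\eta_{0})$, I would first collect the coefficients:
\begin{itemize}
\item $h_{1} = -(\mu+\gamma) + \beta y + \beta_{1} m + c_{1,2}y^{2} - \tfrac12(g_{1,2}^{2}+g_{2,1}^{2})y\mathbbm{1}_{\{\frac12\}}(\eta_{0}) - \tfrac12 G y^{2}$;
\item $h_{2} = -\beta_{1}m/y - \beta + c_{2,1}y + c_{2,2}y^{2} + \tfrac12(g_{1,2}^{2}+g_{2,1}^{2})\mathbbm{1}_{\{\frac12\}}(\eta_{0}) + G y$;
\item $h_{3} = -(c_{1,2}+c_{2,1}+c_{2,2}y) - \tfrac12 G$.
\end{itemize}
The key observation is that, since~\eqref{a.2} fails, $h_{3}\geq 0$. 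Hence $x\mapsto h_{1}+h_{2}x+h_{3}x^{2}$ is convex on $[0,y]$, and its maximum is attained at an endpoint:
\[
h(\cdot,x,y,\nu)\leq \max\{h(\cdot,0,y,\nu),\,h(\cdot,y,y,\nu)\}.
\]
By Example~\ref{ex:transformed function in the representative model}, $h(\cdot,y,y,\nu)=-(\mu+\gamma)$, and $h(\cdot,0,y,\nu)=h_{1}$.

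Next I would bound $h_{1}$ by
\[
h_{1}\leq -(\mu+\gamma) - \big(\beta_{1}^{+}y - \beta_{1}m\big) + \Big(\beta+\beta_{1}^{+} - \tfrac12(g_{1,2}^{2}+g_{2,1}^{2})\mathbbm{1}_{\{\frac12\}}(\eta_{0})\Big)y + \big(c_{1,2}-\tfrac12G\big)y^{2}.
\]
Taking $\limsup$ and using~\eqref{eq:extinction in the epidemic models 4},~\eqref{eq:extinction in the epidemic models 12}, the definition of $u_{I}$ and $N\to N_{\infty}$ gives
\[
\limsup_{s\uparrow\infty} h_{1}\leq -(\mu_{\infty}+\gamma_{\infty}+u_{I}) + (u_{1}+u_{2}N_{\infty})N_{\infty}.
\]
Note that $\beta_{1}^{+}y-\beta_{1}m\geq 0$ because $0\leq m\leq y$, and this is why $u_{I}\geq 0$.

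The remaining point, and the one I expect to be the main obstacle, is the endpoint branch $-(\mu+\gamma)\to-(\mu_{\infty}+\gamma_{\infty})$, which does not contain $u_{I}$. Combining the two branches gives
\[
\limsup_{s\uparrow\infty} h \leq \max\Big\{(u_{1}+u_{2}N_{\infty})N_{\infty} - (\mu_{\infty}+\gamma_{\infty}+u_{I}),\; -(\mu_{\infty}+\gamma_{\infty})\Big\},
\]
so this argument needs $\mu_{\infty}+\gamma_{\infty}>0$. To handle this, I would instead keep the $\beta_{1}$ term inside the convexity argument, for instance by checking that $h(\cdot,y,y,\nu)$ can be bounded by $-(\mu+\gamma)-(\beta_{1}^{+}y-\beta_{1}m)$ plus terms that vanish. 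If that fails, I would accept the $\max$ form, argue that it is negative under~\eqref{eq:extinction in the epidemic models 13} together with the positivity of $\mu_{\infty}+\gamma_{\infty}$, and note that the positive part in~\eqref{eq:extinction in the epidemic models 13} covers the case $u_{1}+u_{2}N_{\infty}<0$.

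With $u$ a negative constant chosen above these bounds, Proposition~\ref{pr:extinction} with $\rho=1$ then yields exponential extinction.
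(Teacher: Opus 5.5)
Your argument is the paper's. Because \eqref{a.2} fails, $h_{3}(\cdot,N)\geq 0$, and Lemma~\ref{le:maximisation 1} (your convexity observation) gives $\max_{x\in[0,N]}h(\cdot,x,N,\nu)=\max\{h_{1}(\cdot,N,\nu),-(\mu+\gamma)\}$. Then $\limsup_{t\uparrow\infty}h_{1}$ is bounded by $(u_{1}+u_{2}N_{\infty})N_{\infty}-(\mu_{\infty}+\gamma_{\infty}+u_{I})$ exactly as you do, and Proposition~\ref{pr:extinction} with $\rho=1$ concludes. The obstacle you isolate is real, and the paper's proof does not resolve it either. Its closing remark ``since $u_{I}\geq 0$'' disposes of the endpoint branch $-(\mu_{\infty}+\gamma_{\infty})$ only when \eqref{eq:extinction in the epidemic models 13} forces $\mu_{\infty}+\gamma_{\infty}>0$, for instance when $u_{I}=0$.

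Your first rescue cannot succeed, because $h(\cdot,y,y,\nu)=-(\mu+\gamma)$ holds exactly: the mean-field term enters the drift as $(\beta+\beta_{1}\E[I_{t}]/N)I_{t}(N-I_{t})$, which vanishes at $I_{t}=N$. In fact no argument can close this case, since the statement fails when $\mu_{\infty}+\gamma_{\infty}=0<u_{I}$. Take $N\equiv 1$, $\mu=\gamma=\beta=0$, $\beta_{1}\equiv -1$, $c_{1,2}=c_{2,1}=c_{2,2}=0$, $\eta_{0}=\frac{1}{2}$, $g_{1,1}=g_{1,2}=0$ and $g_{2,1}\equiv 1$. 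Then $\mathrm{d}I_{t}=-\E[I_{t}]I_{t}(1-I_{t})\,\mathrm{d}t+I_{t}\sqrt{1-I_{t}}\,\mathrm{d}W_{t}^{(2)}$, condition \eqref{a.1} holds, \eqref{a.2} fails, and one may take $u_{1}=-\frac{1}{2}$ and $u_{2}=0$. The solution $I$ stays positive because $\log(I)$ has bounded coefficients. The process $J:=1-I$ satisfies $\mathrm{d}J_{t}=\E[I_{t}](1-J_{t})J_{t}\,\mathrm{d}t-(1-J_{t})\sqrt{J_{t}}\,\mathrm{d}W_{t}^{(2)}$. Compare it with the time-homogeneous diffusion that has drift $(1-x)x$ and the same diffusion coefficient; near $0$ this is a Feller branching diffusion, so $0$ is accessible by Feller's test. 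Hence $J$ hits $0$ with positive probability, and by pathwise uniqueness it then stays there. Writing $\tau_{1}$ for the hitting time of $1$ by $I$, on the event $\{\tau_{1}<\infty\}$ we get $\frac{1}{t}\log(I_{t})\to 0$. Yet $u_{I}=\liminf_{t\uparrow\infty}\E[I_{t}]\geq\P(\tau_{1}<\infty)>0$, while the left-hand side of \eqref{eq:extinction in the epidemic models 13} is $0$. So your fallback of adding the hypothesis $\mu_{\infty}+\gamma_{\infty}>0$, as in Proposition~\ref{pr:extinction 2}, is the correct repair of the statement, and with it your proof is complete.
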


\begin{Example}[Extinction]
The hypotheses of Lemma~\ref{le:extinction 3} are valid in model \eqref{model:3} if $\sigma$ is bounded and there are $N_{\infty},\mu_{\infty},\gamma_{\infty}\geq 0$ satisfying~\eqref{eq:specific limits}. If in addition
\begin{equation*}
\lim_{t\uparrow\infty}\beta(t) = \beta_{\infty}\quad\text{and}\quad\lim_{t\uparrow\infty}\beta_{1}(t) = \beta_{1,\infty}
\end{equation*}
for some $\beta_{\infty}\geq 0$ and $\beta_{1,\infty}\in\R$, then conditions~\eqref{eq:extinction in the epidemic models 4} and~\eqref{eq:extinction in the epidemic models 12} hold for $u_{1} = \beta_{\infty} + \beta_{1,\infty}^{+}$ and $u_{2} = 0$. In this case,~\eqref{eq:a limit involving the transformed function} is satisfied for $h_{\infty} = \beta_{\infty}N_{\infty} - (\mu_{\infty} + \gamma_{\infty})$, and~\eqref{eq:extinction in the epidemic models 13} becomes
\begin{equation*}
\big(\beta_{\infty} + \beta_{1,\infty}^{+}\big)N_{\infty} < \mu_{\infty} + \gamma_{\infty} + u_{I}.
\end{equation*}
This extends the extinction assertion of Theorem~1 in~\cite{BerLan22}, since Theorem~5 there, from which the assertion is inferred, is a special case of Proposition~\ref{pr:extinction}.
\end{Example}

\subsection{Persistence}\label{se:3.3}

We use the pathwise asymptotic identities in Lemma~\ref{le:pathwise asymptotic identities} to derive sufficient conditions for the epidemic to persist above or around a given threshold.

\begin{Proposition}\label{pr:persistence}
Suppose that~\eqref{co:6} and~\eqref{eq:integrability condition} are valid for $\rho > 0$. Then for any positive solution $I$ to~\eqref{eq:SIS epidemic model} and $x_{0} \geq 0$, the following two assertions hold:
\begin{enumerate}[(i)]
\item If $x_{0} > 0$ and for each $\delta\in ]0,x_{0}[$ there is a measurable locally integrable function $w_{\delta}\colon\R_{+}\rightarrow\R$ such that $\limsup_{t\uparrow\infty}\frac{1}{t^{\rho}}\int_{0}^{t}w_{\delta}(s)\,\mathrm{d}s > 0$ and
\begin{equation*}
\liminf_{s\uparrow\infty}\inf_{x\in ]0,N(s)]:\,x < x_{0} - \delta}\frac{1}{s^{\rho -1}}\big(h\big(s,x,N(s),\mathcal{L}(I_{s})\big) - w_{\delta}(s)\big) \geq 0,
\end{equation*}
then $I$ persists above $x_{0}$, that is, $\limsup_{t\uparrow\infty} I_{t}\geq x_{0}$ a.s.

\item If for each $\delta > 0$ there is a measurable locally integrable function $w_{\delta}\colon\R_{+}\rightarrow\R$ such that $\liminf_{t\uparrow\infty}\frac{1}{t^{\rho}}\int_{0}^{t}w_{\delta}(s)\,\mathrm{d}s < 0$ and
\begin{equation*}
\limsup_{s\uparrow\infty}\sup_{x\in ]0,N(s)]:\,x > x_{0} + \delta}\frac{1}{s^{\rho -1}}\big(h\big(s,x,N(s),\mathcal{L}(I_{s})\big) - w_{\delta}(s)\big) \leq 0,
\end{equation*}
then $\liminf_{t\uparrow\infty} I_{t}\leq x_{0}$ a.s.
\end{enumerate}
\end{Proposition}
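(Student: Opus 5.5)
We argue both assertions by contradiction on an event of positive probability, using the identities of Lemma~\ref{le:pathwise asymptotic identities}, which apply because~\eqref{co:6} and~\eqref{eq:integrability condition} hold and $I$ has positive paths. The key point is that $\log(I_{t})/t^{\rho}$ is bounded from above, since $0 < I \leq N$. It is also bounded from below whenever $I$ stays away from $0$. The time integral of $h$, in turn, is controlled by $w_{\delta}$ once the path is confined to the relevant region.

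For (i), fix $\delta\in ]0,x_{0}[$ and consider the event $A_{\delta} := \{\limsup_{t\uparrow\infty}I_{t} < x_{0} - \delta\}$. Suppose $\P(A_{\delta}) > 0$. For $\omega\in A_{\delta}$ there is $t_{0}(\omega)$ with $I_{s}(\omega) < x_{0}-\delta$ for all $s\geq t_{0}$. The hypothesis applied with $x = I_{s}(\omega)\in ]0,N(s)]$ then yields, for any $\varepsilon > 0$ and all large $s$,
\[
h\big(s,I_{s},N(s),\mathcal{L}(I_{s})\big) \geq w_{\delta}(s) - \varepsilon s^{\rho-1}.
\]
Integrating from $t_{1}$ to $t$ and dividing by $t^{\rho}$ gives
\[
\frac{1}{t^{\rho}}\int_{0}^{t}h \geq \frac{C(\omega)}{t^{\rho}} + \frac{1}{t^{\rho}}\int_{0}^{t}w_{\delta}(s)\,\mathrm{d}s - \frac{\varepsilon}{\rho}.
\]
The constant $C(\omega)$ collects the finite integrals over $[0,t_{1}]$ of $h$ and $w_{\delta}$; these are finite because $h(\cdot,I,N,\mathcal{L}(I))$ is pathwise locally integrable (it is the drift of $\log I$ by It\^{o}'s formula) and $w_{\delta}$ is locally integrable. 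Taking $\limsup$ and letting $\varepsilon\downarrow 0$, we get $\limsup_{t}t^{-\rho}\int_{0}^{t}h > 0$ on $A_{\delta}$. By Lemma~\ref{le:pathwise asymptotic identities}, $\limsup_{t}t^{-\rho}\log I_{t} > 0$ a.s.\ on $A_{\delta}$. On the other hand, on $A_{\delta}$ we have $\log I_{t}\leq\log(x_{0}-\delta)$ for large $t$, so this $\limsup$ is $\leq 0$, a contradiction. Hence $\P(A_{\delta}) = 0$. Taking the union over $\delta = 1/n$ gives $\limsup I_{t}\geq x_{0}$ a.s.

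For (ii), fix $\delta > 0$ and set $B_{\delta} := \{\liminf_{t\uparrow\infty} I_{t} > x_{0}+\delta\}$. On $B_{\delta}$ the path eventually lies in the region $\{x > x_{0}+\delta\}$. The symmetric argument gives $h \leq w_{\delta}(s) + \varepsilon s^{\rho-1}$ for large $s$, and therefore
\[
\liminf_{t\uparrow\infty} t^{-\rho}\int_{0}^{t}h \leq \liminf_{t\uparrow\infty} t^{-\rho}\int_{0}^{t}w_{\delta} + \frac{\varepsilon}{\rho} < 0
\]
for $\varepsilon$ small. By the first identity of Lemma~\ref{le:pathwise asymptotic identities}, $\liminf_{t} t^{-\rho}\log I_{t} < 0$ on $B_{\delta}$, so $\log I_{t}\to-\infty$ along a subsequence and $\liminf I_{t} = 0$. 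This contradicts $\liminf I_{t} > x_{0}+\delta \geq 0$ on $B_{\delta}$. Thus $\P(B_{\delta}) = 0$, and taking the union over $\delta = 1/n$ gives $\liminf I_{t}\leq x_{0}$ a.s. If $x_{0}+\delta$ exceeds $N(s)$, the supremum runs over an empty set, but $B_{\delta}$ then forces $I_{s} > N(s)$, which is impossible, so that case is trivial.

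The main technical care lies in handling the $s^{\rho-1}$-scaled error term. When $\rho < 1$, the function $s^{\rho-1}$ is still integrable near infinity in the required sense: $\int_{t_{1}}^{t}s^{\rho-1}\,\mathrm{d}s\leq t^{\rho}/\rho$, so dividing by $t^{\rho}$ leaves the bounded error $\varepsilon/\rho$. The boundary contribution near $s=0$ is avoided by starting the estimate at $t_{1}\geq 1$. One must also justify that the a.s.\ identities of Lemma~\ref{le:pathwise asymptotic identities} can be intersected with the events $A_{\delta}$ and $B_{\delta}$; this is immediate since those identities hold outside a null set.
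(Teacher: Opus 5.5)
Your proof is correct and follows the paper's argument essentially step for step. You argue by contradiction on an event $\{\limsup_{t\uparrow\infty} I_{t} < x_{0}-\delta\}$ (resp.\ $\{\liminf_{t\uparrow\infty} I_{t} > x_{0}+\delta\}$) of positive probability, integrate the bound $h \geq w_{\delta} - \varepsilon s^{\rho-1}$ (resp.\ $h \leq w_{\delta} + \varepsilon s^{\rho-1}$) to obtain an $\varepsilon/\rho$ error, and use Lemma~\ref{le:pathwise asymptotic identities} to contradict the eventual bounds on $I$, exactly as the paper does. One small inaccuracy: your opening claim that $t^{-\rho}\log(I_{t})$ is bounded above because $I \leq N$ would need a growth restriction on $N$. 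Your actual argument only uses $I_{t} < x_{0}-\delta$ eventually on $A_{\delta}$, so nothing depends on that claim.
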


More specifically, as $\limsup_{t\uparrow\infty} N(t) = 0$ implies extinction of the disease, the sufficient conditions in Proposition~\ref{pr:persistence} can take the following form.

\begin{Corollary}\label{co:persistence}
Let~\eqref{co:6} and~\eqref{eq:integrability condition} hold for $\rho = 1$ and $N_{\infty} := \limsup_{t\uparrow\infty} N(t)$ be positive. Then each positive solution $I$ to~\eqref{eq:SIS epidemic model} satisfies the following two statements:
\begin{enumerate}[(i)]
\item If there is a continuous function $f_{\infty}\colon ]0,N_{\infty}]\cap\R_{+}\rightarrow\R$ that admits at least one zero and satisfies $\liminf_{x\downarrow 0} f_{\infty}(x) > 0$ and
\begin{equation*}
\liminf_{s\uparrow\infty} \inf_{x\in ]0,N(s)]:\,x < x_{0}}h\big(s,x,N(s),\mathcal{L}(I_{s})\big) - f_{\infty}(x) \geq 0
\end{equation*}
for the smallest zero $x_{0}$ of $f_{\infty}$, then $I$ persists above $x_{0}$.

\item If there is a continuous function $g_{\infty}\colon \R_{+}\rightarrow\R$ that has at least one zero in $[0,N_{\infty}[$ and satisfies $\limsup_{x\rightarrow N_{\infty}} g_{\infty}(x) < 0$ and
\begin{equation*}
\limsup_{s\uparrow\infty}\sup_{x\in ]0,N(s)]:\,x > y_{0}} h\big(s,x,N(s),\mathcal{L}(I_{s})\big) - g_{\infty}(x) \leq 0
\end{equation*}
for the largest zero $y_{0}$ of $g_{\infty}$ in $[0,N_{\infty}[$, then $\liminf_{t\uparrow\infty} I_{t} \leq y_{0}$ a.s.
\end{enumerate}
In particular, if the requirements of~(i) and~(ii) are met and $x_{0} = y_{0}$, then persistence~\eqref{eq:persistence around a point} around $x_{0}$ holds for $I$.
\end{Corollary}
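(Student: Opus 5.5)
The plan is to reduce both assertions to Proposition~\ref{pr:persistence} with $\rho = 1$ and constant comparison functions $w_{\delta}$. For $\rho = 1$ the normalising factor $s^{\rho-1}$ equals $1$, so there we only need a constant $w_{\delta}$ with the right sign such that $h - w_{\delta}$ has the right asymptotic sign on the relevant $x$-region. Then $\frac{1}{t}\int_{0}^{t}w_{\delta}\,\mathrm{d}s = w_{\delta}$.

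For (i), fix $\delta\in\,]0,x_{0}[$. Since $f_{\infty}$ is continuous on $]0,N_{\infty}]\cap\R_{+}$, we have $\liminf_{x\downarrow 0}f_{\infty}(x) > 0$, and $x_{0}$ is the smallest zero, so $f_{\infty}$ has no zero on $]0,x_{0}[$. By the intermediate value theorem it is therefore strictly positive there. We also need a uniform lower bound on $]0,x_{0}-\delta]$. Near $0$ this comes from the liminf condition: there are $\varepsilon\in\,]0,x_{0}-\delta[$ and $c_{1}>0$ with $f_{\infty}\geq c_{1}$ on $]0,\varepsilon]$. On the compact interval $[\varepsilon,x_{0}-\delta]$ the continuous positive function $f_{\infty}$ has a positive minimum $c_{2}$.

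Set $w_{\delta} := \min\{c_{1},c_{2}\} > 0$. For $x < x_{0}-\delta$ with $x\leq N(s)$ we write $h - w_{\delta} = (h - f_{\infty}(x)) + (f_{\infty}(x) - w_{\delta})$. The second term is nonnegative. The hypothesis gives $\liminf_{s}\inf_{x<x_{0}}(h - f_{\infty}(x))\geq 0$, and restricting to the smaller set $x<x_{0}-\delta$ only increases the infimum. Hence the liminf condition in Proposition~\ref{pr:persistence}(i) holds, and $I$ persists above $x_{0}$.

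One technicality needs care. The domain of $f_{\infty}$ is $]0,N_{\infty}]$, while $N(s)$ may exceed $N_{\infty}$ or $x_{0}$ along the way. This causes no problem, because we only evaluate $f_{\infty}$ at $x<x_{0}\leq N_{\infty}$.

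For (ii) the argument is symmetric. Fix $\delta > 0$. Since $\limsup_{x\to N_{\infty}}g_{\infty}(x) < 0$, there are $\eta > 0$ and $c_{1}>0$ with $g_{\infty}\leq -c_{1}$ on $[N_{\infty}-\eta,N_{\infty}+\eta]$. On $]y_{0},N_{\infty}[$ the function $g_{\infty}$ has no zero, so by the intermediate value theorem and the negative values near $N_{\infty}$ it is negative there. It therefore attains a negative maximum $-c_{2}$ on the compact set $[y_{0}+\delta,N_{\infty}-\eta]$, provided this set is nonempty; otherwise we drop $c_{2}$.

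The main obstacle is the region $x > N_{\infty}+\eta$, which is not covered by these bounds. Eventually $N(s)\leq N_{\infty}+\eta$, because $N_{\infty}$ is the limsup of $N$. Hence for large $s$ only $x\in\,]y_{0}+\delta, N_{\infty}+\eta]$ matters. Take $w_{\delta} := -\min\{c_{1},c_{2}\} < 0$. Since $g_{\infty}(x) \leq w_{\delta}$ on that range, we get $h - w_{\delta}\leq h - g_{\infty}(x)$ there, with eventual limsup $\leq 0$. Proposition~\ref{pr:persistence}(ii) with $x_{0}=y_{0}$ then yields $\liminf_{t\uparrow\infty} I_{t}\leq y_{0}$ a.s.

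The final claim follows by combining (i) and (ii) when $x_{0} = y_{0}$.
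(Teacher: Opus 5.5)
Your proposal is correct and takes essentially the paper's route. You choose constant comparison functions $w_{\delta}$: a positive lower bound for $f_{\infty}$ on $]0,x_{0}-\delta[$, and a negative upper bound for $g_{\infty}$ beyond $y_{0}+\delta$, the latter using that eventually $N(s)<N_{\infty}+\eta$. These are then fed into Proposition~\ref{pr:persistence} with $\rho=1$. The only step you leave implicit is one the paper checks: that the smallest zero $x_{0}$ and the largest zero $y_{0}$ in $[0,N_{\infty}[$ exist, because the zero sets are closed and, by the boundary sign conditions, bounded away from $0$ and from $N_{\infty}$ respectively.
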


To show persistence above or around a given level in the generalised SIS models of Example~\ref{ex:a representative SIS epidemic model}, we choose the functions $f_{\infty}$ and $g_{\infty}$ in Corollary~\ref{co:persistence} as follows.

\begin{Corollary}\label{co:persistence 2}
In the setting of Example~\ref{ex:a representative SIS epidemic model} let~\eqref{a.1} be valid with $N_{\infty} > 0$. Then for any positive solution $I$ to~\eqref{eq:a representative SIS epidemic model} the following two assertions hold:
\begin{enumerate}[(i)]
\item Assume that there are $a_{\infty}, b_{\infty}, c_{\infty}, d_{\infty}\in\R$ such that
\begin{align*}
\liminf_{s\uparrow \infty} h_{1}\big(s,N(s),\mathcal{L}(I_{s})\big) &\geq a_{\infty}, \quad \liminf_{s\uparrow \infty} h_{2}\big(s,N(s),\mathcal{L}(I_{s})\big) \geq b_{\infty},\\
\liminf_{s\uparrow \infty} h_{3}(s,N(s)) &\geq c_{\infty},\quad \limsup_{s\uparrow \infty} g_{1,1}(s)g_{1,2}(s)\mathbbm{1}_{\{\frac{1}{2}\}}(\eta_{0})\leq d_{\infty}.
\end{align*}
If $a_{\infty} > d_{\infty}N_{\infty}^{\frac{3}{2}}$, then $I$ persists above the smallest zero $x_{0}$ of $f_{\infty}\colon ]0,N_{\infty}]\rightarrow\R$ defined by
\begin{equation}\label{eq:auxiliary function for persistence}
f_{\infty}(x) := a_{\infty}+ b_{\infty}x + c_{\infty}x^{2} - d_{\infty}(N_{\infty}-x)^{\frac{3}{2}},
\end{equation}
and $x_{0} < N_{\infty}$ whenever $\gamma_{\infty} + \mu_{\infty} > 0$. Further, for $d_{\infty} = 0$ this zero is of the form $x_{0} = \frac{-b_{\infty}-\sqrt{b_{\infty}^{2}-4a_{\infty}c_{\infty}}}{2c_{\infty}}$, if $c_{\infty} \neq 0$, and $x_{0} = - \frac{a_{\infty}}{b_{\infty}}$, otherwise. 

\item Suppose that there are $\hat{a}_{\infty}, \hat{b}_{\infty},\hat{c}_{\infty},\hat{d}_{\infty}\in\R$ such that
\begin{align*}
\limsup_{s\uparrow \infty} h_{1}\big(s,N(s),\mathcal{L}(I_{s})\big) &\leq \hat{a}_{\infty}, \quad \limsup_{s\uparrow \infty} h_{2}\big(s,N(s),\mathcal{L}(I_{s})\big) \leq \hat{b}_{\infty},\\
\limsup_{s\uparrow \infty} h_{3}(s,N(s)) &\leq \hat{c}_{\infty},\quad \liminf_{s\uparrow \infty} g_{1,1}(s)g_{1,2}(s)\mathbbm{1}_{\{\frac{1}{2}\}}(\eta_{0})\geq \hat{d}_{\infty}.
\end{align*}
If $\hat{a}_{\infty} \geq \hat{d}_{\infty}N_{\infty}^{\frac{3}{2}}$ and $\hat{a}_{\infty} + \hat{b}_{\infty}N_{\infty} + \hat{c}_{\infty}N_{\infty}^{2} < 0$, then $\liminf_{t\uparrow\infty} I_{t} \leq y_{0}$ a.s.~for the largest zero $y_{0}$ of $g_{\infty}\colon [0,N_{\infty}[\rightarrow\R$ given by
\begin{equation}\label{eq:auxiliary function 2 for persistence}
g_{\infty}(x) := \hat{a}_{\infty} + \hat{b}_{\infty}x + \hat{c}_{\infty}x^{2} - \hat{d}_{\infty}(N_{\infty}-x)^{\frac{3}{2}}.
\end{equation}
Further, for $\hat{d}_{\infty} = 0$ this zero is unique and of the form $y_{0} = \frac{-\hat{b}_{\infty} - \sqrt{\hat{b}_{\infty}^{2}-4\hat{a}_{\infty}\hat{c}_{\infty}}}{2\hat{c}_{\infty}}$, if $\hat{c}_{\infty} \neq 0$, and $y_{0} = - \frac{\hat{a}_{\infty}}{\hat{b}_{\infty}}$, otherwise.
\end{enumerate}
\end{Corollary}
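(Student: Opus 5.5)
This corollary will follow from Corollary~\ref{co:persistence} with $\rho=1$, once we have checked \eqref{co:6} and \eqref{eq:integrability condition} and shown that the functions $f_{\infty}$ and $g_{\infty}$ in \eqref{eq:auxiliary function for persistence} and \eqref{eq:auxiliary function 2 for persistence} have the required properties.

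\textbf{Hypotheses of Corollary~\ref{co:persistence}.} In the setting of Example~\ref{ex:a representative SIS epidemic model} with \eqref{a.1}, we have $\zeta=(1,1)$ and $\eta_{1,1}=1$, $\eta_{1,2}=\eta_{2,1}=\eta_{0}\geq\frac{1}{2}$. The $g_{i,j}$ are bounded, and $N$ is bounded because it converges. Hence \eqref{co:6} holds, and \eqref{eq:integrability condition 2} holds for $\rho=1$. Since $N(s)\to N_{\infty}$, the $\limsup$ in Corollary~\ref{co:persistence} equals $N_{\infty}>0$.

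\textbf{Lower bound on $h$ for part (i).} We use the decomposition of Example~\ref{ex:transformed function in the representative model}. Because $\beta_{0}=0$ we have $b_{0}=0$, so for $0<x\leq N(s)$
\begin{equation*}
h(s,x,N(s),\mathcal{L}(I_{s})) = h_{1}+h_{2}x+h_{3}x^{2} - g_{1,1}g_{1,2}\mathbbm{1}_{\{\frac{1}{2}\}}(\eta_{0})(N(s)-x)^{\frac{3}{2}}.
\end{equation*}
Each coefficient is asymptotically bounded below by $a_{\infty},b_{\infty},c_{\infty}$ and $-d_{\infty}$. The $x$-range is bounded by $N(s)$, so the errors are uniform in $x$. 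Replacing $(N(s)-x)^{3/2}$ by $(N_{\infty}-x)^{3/2}$ costs an error that vanishes uniformly on $[0,N(s)]$, by uniform continuity of $z\mapsto z^{3/2}$ on bounded sets. For $x>N_{\infty}$ we set $(N_{\infty}-x)^{3/2}:=0$, and that region shrinks.

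This gives $\liminf_{s}\inf_{x}(h-f_{\infty}(x))\geq 0$. Strictly, it gives this up to an arbitrary $\varepsilon>0$. The cleanest fix is to apply Proposition~\ref{pr:persistence}(i) directly with $w_{\delta}$ a positive constant below $\min_{[0,x_{0}-\delta]}f_{\infty}$. That minimum is positive because $x_{0}$ is the smallest zero and $f_{\infty}(0^{+})=a_{\infty}-d_{\infty}N_{\infty}^{3/2}>0$.

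\textbf{Properties of $f_{\infty}$ and the upper bound for part (ii).} The function $f_{\infty}$ is continuous, and $\lim_{x\downarrow0}f_{\infty}(x)=a_{\infty}-d_{\infty}N_{\infty}^{3/2}>0$. For existence of a zero below $N_{\infty}$, evaluate at $x=N$ using the identity $h(\cdot,y,y,\nu)=-(\mu+\gamma)$ from Example~\ref{ex:transformed function in the representative model}. This gives $f_{\infty}(N_{\infty})\leq\liminf(h_{1}+h_{2}N+h_{3}N^{2})=-(\mu_{\infty}+\gamma_{\infty})$, which is negative when $\mu_{\infty}+\gamma_{\infty}>0$. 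The intermediate value theorem then yields a zero $x_{0}<N_{\infty}$. Without that assumption we need $f_{\infty}(N_{\infty})\leq 0$, which holds since $\mu_{\infty}+\gamma_{\infty}\geq0$, so a zero still exists in $]0,N_{\infty}]$.

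Part (ii) is symmetric. The upper bounds give $h-g_{\infty}\leq o(1)$ uniformly for $x>y_{0}$. We have $g_{\infty}(0)=\hat{a}_{\infty}-\hat{d}_{\infty}N_{\infty}^{3/2}\geq0$ and $g_{\infty}(N_{\infty}^{-})=\hat{a}_{\infty}+\hat{b}_{\infty}N_{\infty}+\hat{c}_{\infty}N_{\infty}^{2}<0$, so a zero exists in $[0,N_{\infty}[$. Extend $g_{\infty}$ to $\R_{+}$ continuously. To absorb the $o(1)$ error, again apply Proposition~\ref{pr:persistence}(ii) with a negative constant $w_{\delta}$ above $\max_{[y_{0}+\delta,N_{\infty}]}g_{\infty}$. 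That maximum is negative since $y_{0}$ is the largest zero and $g_{\infty}<0$ near $N_{\infty}$.

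\textbf{Closed forms when $d_{\infty}=0$.} Here $f_{\infty}$ is a polynomial of degree at most two with $f_{\infty}(0)=a_{\infty}>0$. If $c_{\infty}\neq0$ its smallest positive root is given by the stated formula; if $c_{\infty}=0$ the root is $-a_{\infty}/b_{\infty}$. The same reasoning gives $y_{0}$. Uniqueness of $y_{0}$ holds because a quadratic with $g_{\infty}(0)\geq0>g_{\infty}(N_{\infty})$ has exactly one sign change on $[0,N_{\infty}]$.

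\textbf{Main obstacle.} The delicate step is selecting the correct root and its sign conventions. For $c_{\infty}>0$ versus $c_{\infty}<0$ we must check that $\frac{-b-\sqrt{b^{2}-4ac}}{2c}$ is indeed the smallest root in $]0,N_{\infty}]$. For $c<0$ it is the positive root, since $ac<0$. For $c>0$ with a sign change, it is the smaller of the two positive roots. This needs care, as do the uniform-in-$x$ limit arguments.
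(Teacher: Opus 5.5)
Your proposal follows essentially the paper's route. You reduce to Corollary~\ref{co:persistence} through the decomposition in Example~\ref{ex:transformed function in the representative model}. You get existence of $x_0$ and $y_0$ from the intermediate value theorem, using $h(\cdot,N,N,\nu)=-(\mu+\gamma)$. You read off the closed forms from the quadratic formula, whereas the paper cites Lemma~\ref{le:unique zero of a sum of power functions} and Remark~\ref{re:unique zero of a sum of power functions}. Your explicit check of \eqref{co:6} and \eqref{eq:integrability condition} for $\rho=1$ is a useful addition that the paper leaves implicit.

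Two minor remarks. First, your $\varepsilon$-``fix'' is unnecessary: a lower bound $-\varepsilon$ for every $\varepsilon>0$ is exactly $\liminf\geq 0$, so Corollary~\ref{co:persistence} applies as stated. Second, in (ii) $N(s)$ may exceed $N_\infty$. Your negative constant $w_\delta$ must therefore dominate $g_\infty$ on $[y_0+\delta,N_\infty+\delta_0]$ for some $\delta_0>0$, not only on $[y_0+\delta,N_\infty]$. This is what the proof of Corollary~\ref{co:persistence}(ii) does, and it works because $g_\infty(N_\infty)<0$.

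The one step that fails is the uniqueness of $y_0$ when $\hat d_\infty=0$. ``Exactly one sign change on $[0,N_\infty]$'' does not imply exactly one zero once $g_\infty(0)=\hat a_\infty=0$. Take $\hat a_\infty=0$ and $\hat c_\infty<0<\hat b_\infty$; then $\hat b_\infty+\hat c_\infty N_\infty<0$ is still allowed. For instance, $g_\infty(x)=x-x^2$ with $N_\infty=2$ has zeros $0$ and $1$ in $[0,N_\infty[$, and generally both $0$ and $-\hat b_\infty/\hat c_\infty$ are zeros in $[0,N_\infty[$.

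So uniqueness holds only for $\hat a_\infty>0$. This is a defect in the statement itself, and the paper's proof has the same gap: Lemma~\ref{le:unique zero of a sum of power functions} requires $a+dy^{3/2}>0$ strictly, while the corollary only assumes $\hat a_\infty\geq \hat d_\infty N_\infty^{3/2}$. The displayed formula $\frac{-\hat b_\infty-\sqrt{\hat b_\infty^2-4\hat a_\infty\hat c_\infty}}{2\hat c_\infty}$ (or $-\hat a_\infty/\hat b_\infty$) still gives the largest zero in every case, and that is all the persistence conclusion uses.
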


\begin{Remark}
Let $c_{2,2,\infty},u_{1}, v_{1}, v_{2}, v_{3}\in\R$ satisfy $\lim_{t\uparrow \infty} c_{2,2}(t)= c_{2,2,\infty}$, inequality~\eqref{eq:extinction in the epidemic models 4} and the following three conditions:
\begin{enumerate}[(i)]
\item $\liminf_{t\uparrow \infty} (\beta -\beta_{1}^{-})(t) - \frac{1}{2}(g_{1,2}(t)^{2} + g_{2,1}(t)^{2})\mathbbm{1}_{\{\frac{1}{2}\}}(\eta_{0}) \geq v_{1}$.

\item $\lim_{t\uparrow\infty} c_{1,2}(t) - \frac{1}{2}((g_{1,1}(t)+ g_{1,2}(t)\mathbbm{1}_{\{1\}}(\eta_{0}))^{2} + g_{2,1}(t)^{2}\mathbbm{1}_{\{1\}}(\eta_{0})) = v_{2}$.

\item $\lim_{t\uparrow\infty} c_{2,1}(t) + (g_{1,1}(t)+ g_{1,2}(t)\mathbbm{1}_{\{1\}}(\eta_{0}))^{2} + g_{2,1}(t)^{2}\mathbbm{1}_{\{1\}}(\eta_{0}) = v_{3}$.
\end{enumerate}
Then in Corollary~\ref{co:persistence 2} we may take the constants
\begin{equation}\label{eq:choice of coefficients}
\begin{split}
a_{\infty} & = - (\mu_{\infty} + \gamma_{\infty}) + v_{I} + v_{1}N_{\infty} + v_{2}N_{\infty}^{2},\quad \hat{a}_{\infty} = a_{\infty} - u_{I} - v_{I} + (u_{1} - v_{1})N_{\infty},\\
b_{\infty} & = -u_{1} + \frac{u_{I}}{N_{\infty}} + v_{3}N_{\infty} + c_{2,2,\infty}N_{\infty}^{2},\quad \hat{b}_{\infty} = b_{\infty} + u_{1} - v_{1} - \frac{u_{I} + v_{I}}{N_{\infty}},\\\
c_{\infty} & = \hat{c}_{\infty} = -v_{2} - v_{3} - c_{2,2,\infty}N_{\infty}
\end{split}
\end{equation}
for $u_{I} = v_{I} = 0$ or more generally any $u_{I},v_{I}\geq 0$ such that
\begin{equation}\label{eq:condition on the coefficients}
u_{I}\leq \liminf_{t\uparrow\infty} \beta_{1}^{+}(t)N(t) - \beta_{1}(t)\E\big[I_{t}\big]\quad\text{and}\quad  v_{I}\leq \liminf_{t\uparrow\infty} \beta_{1}(t)\E\big[I_{t}\big] + \beta_{1}^{-}(t)N(t).
\end{equation}
\end{Remark}

In the setting of Example~\ref{ex:a representative SIS epidemic model} we observe that if $\lim_{t\uparrow\infty}N(t) = N_{\infty}$ and $\lim_{t\uparrow\infty}\beta_{1}(t)$ $= \beta_{1,\infty}$ for some $N_{\infty} > 0$ and $\beta_{1,\infty}\in\R$, then
\begin{equation}\label{eq:limits for a solution}
m_{I} := \limsup_{t\uparrow\infty} \beta_{1}(t)\frac{\E\big[I_{t}\big]}{N(t)}\quad\text{and}\quad n_{I} := \liminf_{t\uparrow\infty} \beta_{1}(t)\frac{\E\big[I_{t}\big]}{N(t)}
\end{equation}
satisfy $-\beta_{1,\infty}^{-} \leq n_{I} \leq m_{I} \leq \beta_{1,\infty}^{+}$ for any positive solution $I$ to~\eqref{eq:a representative SIS epidemic model}. In this case, any constants $u_{I},v_{I}\geq 0$ satisfying~\eqref{eq:condition on the coefficients} are of the form
\begin{equation}\label{eq:choice of coefficients 2}
u_{I} = N_{\infty}(\beta_{1,\infty}^{+} - x)\quad\text{and}\quad v_{I} = N_{\infty}(\beta_{1,\infty}^{-} + y)
\end{equation}
for some $x\in [m_{I},\beta_{1,\infty}^{+}]$ and $y\in [-\beta_{1,\infty}^{-},n_{I}]$. Now we can readily show that Corollary~\ref{co:persistence 2} generalises the persistence results of the SIS models in~\cite{WanCaiDinGui18, CaiCaiMao19, GraGreHuMaoPan11, CaiCaiMao19-2, BerLan22}.

\begin{Example}[Persistence]\label{ex:persistence 1}
In model~\eqref{model:1} of Example~\ref{ex:a representative SIS epidemic model} let $N_{\infty} > 0$, $\gamma_{\infty},\mu_{\infty}\geq 0$ and $\beta_{1,\infty}\in\R$ satisfy~\eqref{eq:specific limits} and $\lim_{t\uparrow\infty} \beta_{1}(t) = \beta_{1,\infty}$. Then the constants
\begin{equation*}
u_{1} = \beta_{e} + \beta_{1,\infty}^{+},\quad v_{1} = \beta_{e}  - \beta_{1,\infty}^{-},\quad v_{2} = -\frac{\xi^{2}}{4\theta},\quad v_{3} =\frac{\xi^{2}}{2\theta},\quad d_{\infty} = 0
\end{equation*}
are feasible in~\eqref{eq:choice of coefficients} and Corollary~\ref{co:persistence 2} when~\eqref{eq:choice of coefficients 2} is valid for some $x\in [m_{I},\beta_{1,\infty}^{+}]$ and $y\in [-\beta_{1,\infty}^{-},n_{I}]$. So, we obtain three assertions extending~\cite[Theorem~3.2]{WanCaiDinGui18} as follows:
\begin{enumerate}[(i)]
\item If $(\beta_{e}  + y)N_{\infty} - \frac{\xi^{2}}{4\theta}N^{2}_{\infty} > \mu_{\infty} + \gamma_{\infty}$, then $I$ persists above the level
\begin{equation*}
x_{0} := N_{\infty} - (\beta_{e} + x)\frac{2\theta}{\xi^{2}} + \sqrt{(\beta_{e} + x)^{2}\frac{4\theta^{2}}{\xi^{4}} - (\mu_{\infty} + \gamma_{\infty} + (x - y)N_{\infty})\frac{4\theta}{\xi^{2}}},
\end{equation*}
which belongs to $]0,N_{\infty}]$, and $x_{0} < N_{\infty}$ follows from $\mu_{\infty} + \gamma_{\infty} > 0$.

\item If $(\beta_{e}  + x)N_{\infty} - \frac{\xi^{2}}{4\theta}N^{2}_{\infty} \geq \mu_{\infty} + \gamma_{\infty} > (x - y)N_{\infty}$, then $\liminf_{t\uparrow\infty} I_{t} \leq y_{0}$ a.s.~for
\begin{equation*}
y_{0} := N_{\infty} - (\beta_{e} + y)\frac{2\theta}{\xi^{2}} + \sqrt{(\beta_{e} + y)^{2}\frac{4\theta^{2}}{\xi^{4}} - (\mu_{\infty} + \gamma_{\infty} + (y - x)N_{\infty})\frac{4\theta}{\xi^{2}}},
\end{equation*}
lying in $[0,N_{\infty}[$, and $y_{0} > 0$ whenever the first required inequality is strict.

\item If $\lim_{t\uparrow\infty}\beta_{1}(t)\E[I_{t}]$ exists, $x = y = m_{I}$ and $(\beta_{e}  + m_{I})N_{\infty} - \frac{\xi^{2}}{4\theta}N^{2}_{\infty} > \mu_{\infty} + \gamma_{\infty} > 0$, then $0 < x_{0} = y_{0} < N_{\infty}$ and $I$ persists around $x_{0}$.
\end{enumerate}
\end{Example}

\begin{Example}[Persistence]\label{ex:persistence 2}
In model~\eqref{model:2} of Example~\ref{ex:a representative SIS epidemic model} assume that $N_{\infty} > 0$, $\beta_{\infty},\gamma_{\infty},\mu_{\infty},a_{1,\infty},a_{3,\infty},\sigma_{1,\infty}, \sigma_{2,\infty} \geq 0$ and $\beta_{1,\infty},a_{2,\infty}\in\R$ are such that~\eqref{eq:specific limits} and
\begin{equation*}
\lim_{t\uparrow\infty}\beta(t) = \beta_{\infty},\quad \lim_{t\uparrow\infty} \beta_{1}(t) = \beta_{1,\infty},\quad \lim_{t\uparrow\infty} a_{i}(t)  = a_{i,\infty},\quad \lim_{t\uparrow\infty}\sigma_{j}(t) = \sigma_{j,\infty}
\end{equation*}
hold for all $i\in\{1,2,3\}$ and $j\in\{1,2\}$. Then in~\eqref{eq:choice of coefficients} and Corollary~\ref{co:persistence 2} we may choose the constants
\begin{align*}
u_{1} & =\beta_{\infty} + \beta_{1,\infty}^{+} - \frac{1}{2}(a_{2,\infty}^{2} + a_{3,\infty}^{2})\sigma_{2,\infty}^{2},\quad v_{1} = \beta_{\infty} -\beta_{1,\infty}^{-} - \frac{1}{2}(a_{2,\infty}^{2}+a_{3,\infty}^{2})\sigma_{2,\infty}^{2},\\
v_{2} & = -\frac{1}{2}a_{1,\infty}^{2}\sigma_{1,\infty}^{2},\quad v_{3}=a_{1,\infty}^{2}\sigma_{1,\infty}^{2},\quad d_{\infty} = -a_{1,\infty}a_{2,\infty}\sigma_{1,\infty}\sigma_{2,\infty}
\end{align*}
when~\eqref{eq:choice of coefficients 2} holds for $x\in [m_{I},\beta_{1,\infty}^{+}]$ and $y\in [-\beta_{1,\infty}^{-},n_{I}]$. Hence, the following assertions generalise~\cite[Theorem 4.1]{CaiCaiMao19}, \cite[Theorem 4.1]{CaiCaiMao19-2} and \cite[Theorem 5.1]{GraGreHuMaoPan11}:
\begin{enumerate}[(i)]
\item The solution $I$ persists above the smallest zero $x_{0}\in ]0,N_{\infty}]$ of function~\eqref{eq:auxiliary function for persistence} if
\begin{equation}\label{eq:sufficient condition for persistence in model 2}
\begin{split}
&\bigg(\beta_{\infty} + y - \frac{1}{2}(a_{2,\infty}^{2} + a_{3,\infty}^{2})\sigma_{2,\infty}^{2}\bigg)N_{\infty}\\
&\quad - \frac{1}{2}a_{1,\infty}^{2}\sigma_{1,\infty}^{2}N_{\infty}^{2} + a_{1,\infty}a_{2,\infty}\sigma_{1,\infty}\sigma_{2,\infty}N_{\infty}^{\frac{3}{2}} > \mu_{\infty} + \gamma_{\infty},
\end{split}
\end{equation}
and $x_{0} < N_{\infty}$ whenever $\gamma_{\infty} + \mu_{\infty} > 0$. If in addition to~\eqref{eq:sufficient condition for persistence in model 2} we have $a_{2,\infty}=0$, $\sigma_{1,\infty}>0$ and $a_{1,\infty} = a_{3,\infty} = 1$, then
\begin{equation*}
x_{0} = N_{\infty} - \frac{\beta_{\infty} + x - \frac{1}{2}\sigma_{2,\infty}^{2}}{\sigma_{1,\infty}^{2}} + \sqrt{\frac{(\beta_{\infty} + x - \frac{1}{2}\sigma_{2,\infty}^{2})^{2}}{\sigma_{1,\infty}^{4}}  - 2\frac{\mu_{\infty} + \gamma_{\infty} + (x - y)N_{\infty}}{\sigma_{1,\infty}^{2}}}.
\end{equation*}

\item Let~\eqref{eq:sufficient condition for persistence in model 2} hold for $x$ in lieu of $y$ when the inequality is not strict, and $\mu_{\infty} + \gamma_{\infty}$ $> (x - y)N_{\infty}$, then the largest zero $y_{0}\in [0,N_{\infty}[$ of function~\eqref{eq:auxiliary function 2 for persistence} satisfies
\begin{equation*}
\liminf_{t\uparrow\infty} I_{t} \leq y_{0}\quad\text{a.s.,}
\end{equation*}
and $y_{0} > 0$ whenever the first required inequality is strict. Further, if $a_{2,\infty}=0$, $\sigma_{1,\infty}>0$ and $a_{1,\infty} = a_{3,\infty} = 1$, then
\begin{equation*}
y_{0} = N_{\infty} - \frac{\beta_{\infty} + y - \frac{1}{2}\sigma_{2,\infty}^{2}}{\sigma_{1,\infty}^{2}} + \sqrt{\frac{(\beta_{\infty} + y - \frac{1}{2}\sigma_{2,\infty}^{2})^{2}}{\sigma_{1,\infty}^{4}}  - 2\frac{\mu_{\infty} + \gamma_{\infty} + (y - x)N_{\infty}}{\sigma_{1,\infty}^{2}}}.
\end{equation*}

\item If $m_{I} = n_{I} = x = y$, inequality~\eqref{eq:sufficient condition for persistence in model 2} holds for $y = m_{I}$ and $\mu_{\infty} + \gamma_{\infty} > 0$, then $0 < x_{0} = y_{0} < N_{\infty}$ and $I$ persists around $x_{0}$.
\end{enumerate}
\end{Example}

\begin{Example}[Persistence]\label{ex:persistence 3}
In model~\eqref{model:3} of Example~\ref{ex:a representative SIS epidemic model} suppose that $N_{\infty} > 0$ and $\beta_{\infty},\gamma_{\infty},\mu_{\infty},\sigma_{\infty}\geq 0$ satisfy~\eqref{eq:specific limits}, $\lim_{t\uparrow\infty}\beta(t) = \beta_{\infty}$ and $\lim_{t\uparrow\infty}\sigma(t) = \sigma_{\infty}$. Then
\begin{equation*}
u_{1} = \beta_{\infty} + \beta_{1,\infty}^{+},\quad v_{1} = \beta_{\infty} - \beta_{1,\infty}^{-},\quad v_{2}=0,\quad v_{3}= -\frac{1}{2}\sigma_{\infty}^{2},\quad d_{\infty} = 0
\end{equation*}
are feasible constants in~\eqref{eq:choice of coefficients} and Corollary~\ref{co:persistence 2} when~\eqref{eq:choice of coefficients 2} is valid for $x\in [m_{I},\beta_{1,\infty}^{+}]$ and $y\in [-\beta_{1,\infty}^{-},n_{I}]$. Consequently, the results in \cite[Section 3.1]{BerLan22} extend as follows:
\begin{enumerate}[(i)]
\item If $(\beta_{\infty} + y)N_{\infty} > \mu_{\infty} + \gamma_{\infty}$, then $I$ persists above $x_{0}\in ]0,N_{\infty}]$ defined by
\begin{equation*}
x_{0} := \frac{1}{2}N_{\infty} + \frac{\beta_{\infty} + x}{\sigma_{\infty}^{2}} - \sqrt{\bigg(\frac{1}{2}N_{\infty} - \frac{\beta_{\infty} + x}{\sigma_{\infty}^{2}}\bigg)^{2} + 2\frac{\mu_{\infty} + \gamma_{\infty} + (x - y)N_{\infty}}{\sigma_{\infty}^{2}}},
\end{equation*}
if $\sigma_{\infty} > 0$, and $x_{0} := \frac{- (\mu_{\infty} + \gamma_{\infty}) + (\beta_{\infty} + y)N_{\infty}}{\beta_{\infty} + x}$, otherwise. Moreover, if $\mu_{\infty} + \gamma_{\infty} > 0$, then $x_{0} < N_{\infty}$.

\item If $(\beta_{\infty} + x)N_{\infty} \geq \mu_{\infty} + \gamma_{\infty} > (x - y)N_{\infty}$, then $\liminf_{t\uparrow\infty}I_{t} \leq y_{0}$ a.s.~for $y_{0}\in [0,N_{\infty}[$ given by
\begin{equation*}
y_{0} := \frac{1}{2}N_{\infty} + \frac{\beta_{\infty} + y}{\sigma_{\infty}^{2}} - \sqrt{\bigg(\frac{1}{2}N_{\infty} - \frac{\beta_{\infty} + y}{\sigma_{\infty}^{2}}\bigg)^{2} + 2\frac{\mu_{\infty} + \gamma_{\infty} + (y - x)N_{\infty}}{\sigma_{\infty}^{2}}},
\end{equation*}
if $\sigma_{\infty} > 0$, and $y_{0} := \frac{- (\mu_{\infty} + \gamma_{\infty}) + (\beta_{\infty} + x)N_{\infty}}{\beta_{\infty} + y}$, otherwise. If in fact the first required inequality is strict, then $y_{0} > 0$.

\item If $m_{I} = n_{I} = x = y$ and $(\beta_{\infty} + m_{I})N_{\infty} > \mu_{\infty} + \gamma_{\infty} > 0$, then $0 < x_{0} = y_{0} < N_{\infty}$ and $I$ persists around $x_{0}$.
\end{enumerate}
\end{Example}

\begin{Remark}
The condition $a_{\infty} > d_{\infty}N_{\infty} ^{\frac{3}{2}}$ in Examples~\ref{ex:persistence 1}--\ref{ex:persistence 3} corresponds to the assumption that the stochastic reproduction number defined in~\cite{WanCaiDinGui18, CaiCaiMao19, GraGreHuMaoPan11, CaiCaiMao19-2, BerLan22} exceeds one.
\end{Remark}

\section{Numerical approximation and implementation}\label{se:4}

\subsection{A distribution-dependent Euler--Maruyama scheme}\label{se:4.1}

In what follows, for a finite time horizon $T > 0$ and $n\in\N$ let $\mathbb{T}_{n}$ be a partition of $[0,T]$ of the form $\mathbb{T}_{n} = (t_{j,n})_{j\in\{0,\dots,k_{n}\}}$ for some $k_{n}\in\N$ and $t_{0,n},\dots,t_{k_{n},n}\in [0,T]$ satisfying $0 = t_{0,n} < \cdots < t_{k_{n},n} = T$. Given $p\geq 2$, we take an $\mathcal{F}_{t_{j,n}}$-measurable map
\begin{equation*}
L_{j,n}\colon\Omega\rightarrow\mathcal{P}_{p}(\R)
\end{equation*}
for any $j\in\{0,\dots,k_{n}-1\}$ and define the \emph{continuous-time Euler--Maruyama scheme} along $\mathbb{T}_{n}$ with respect to~\eqref{eq:SIS epidemic model} and $(L_{j,n})_{j\in\{0,\dots,k_{n}-1\}}$ to be the $\mathbb{F}$-adapted right-continuous process $I^{(n)}\colon [0,T]\times\Omega\rightarrow\R$ first recursively given by $I_{0}^{(n)} := \xi_{0}$ and
\begin{equation}\label{eq:Euler--Maruyama scheme}
\begin{split}
I_{t_{j+1},n}^{(n)} &:= I_{t_{j},n}^{(n)} + \sum_{i=0}^{k}\overline{b}_{i}(t_{j,n},N(t_{j,n}),L_{j,n})\big((I_{t_{j,n}}^{(n)})^{+}\wedge N(t_{j,n})\big)^{i}(t_{j+1,n} - t_{j,n}) \\
&\quad + \sum_{i=1}^{d}\overline{f}_{i}\big(t_{j,n},I_{t_{j,n}}^{(n)},N(t_{j,n}) - I_{t_{j,n}}^{(n)}\big)(W_{t_{j+1},n}^{(i)} - W_{t_{j,n}}^{(i)})
\end{split}
\end{equation}
for each $j\in\{0,\dots,k_{n}-1\}$ and then constantly extended by setting $I_{t}^{(n)} := I_{t_{j,n}}^{(n)}$ for all $t\in ]t_{j,n},t_{j+1,n}[$. This yields the representation $I^{(n)} = I_{T}^{(n)}\mathbbm{1}_{\{T\}} + \sum_{j=0}^{k_{n}-1} I_{t_{j,n}}^{(n)}\mathbbm{1}_{[t_{j,n},t_{j+1,n}[}$.

By the \emph{interpolated Euler--Maruyama scheme} we shall mean the unique $\mathbb{F}$-adapted continuous process $\hat{I}^{(n)}\colon [0,T]\times\Omega\rightarrow\R$ satisfying
\begin{equation}\label{eq:interpolated Euler--Maruyama scheme}
\begin{split}
\hat{I}_{t}^{(n)} &= I_{t_{j,n}}^{(n)} + \sum_{i=0}^{k}\overline{b}_{i}(t_{j,n},N(t_{j,n}),L_{j,n})\big((I_{t_{j,n}}^{(n)})^{+}\wedge N(t_{j,n})\big)^{i}(t - t_{j,n})\\
&\quad + \sum_{i=1}^{d}\overline{f}_{i}\big(t_{j,n},I_{t_{j,n}}^{(n)},N(t_{j,n}) - I_{t_{j,n}}^{(n)}\big)(W_{t}^{(i)} - W_{t_{j,n}}^{(i)})
\end{split}
\end{equation}
for any $j\in\{0,\dots,k_{n}-1\}$ and $t\in [t_{j,n},t_{j+1,n}]$. Then $\hat{I}^{(n)}$ is a semimartingale and $\hat{I}_{t_{j,n}}^{(n)} = I_{t_{j,n}}^{(n)}$ for all $j\in\{0,\dots,k_{n}\}$, by definition.

To deduce a quantitative estimate of the absolute $p$th moment of $\hat{I}^{(n)}$, we impose a \emph{boundedness and Lipschitz condition} on $b_{0},\dots,b_{k}$ that is stronger than~\eqref{co:1}.
\begin{enumerate}[label=(C.\arabic*), ref=C.\arabic*, leftmargin=\widthof{(C.7)} + \labelsep]
\setcounter{enumi}{6}
\item\label{co:7} For every $i\in\{0,\dots,k\}$ there are some measurable locally bounded functions $\hat{b}_{i},\hat{\lambda}_{i}\colon\R_{+}\times\R_{+}\rightarrow\R_{+}$ such that
\begin{equation*}
|b_{i}(s,y,\nu)| \leq \hat{b}_{i}(s,y)\quad\text{and}\quad |b_{i}(s,y,\nu) - b_{i}(s,y,\tilde{\nu})| \leq \hat{\lambda}_{i}(s,y)\vartheta(\nu,\tilde{\nu})
\end{equation*}
for any $s,y\geq 0$ and $\nu,\tilde{\nu}\in\mathcal{P}_{0}(\R)$.
\end{enumerate}

This requirement yields an \emph{affine growth condition} for the drift coefficient of~\eqref{eq:SIS epidemic model}. More specifically,
\begin{equation}\label{eq:affine growth condition on the drift coefficient}
\bigg|\sum_{i=0}^{k}b_{i}(\cdot,N,\nu)x^{i}\bigg| \leq \hat{b}_{0}(\cdot,N) + \hat{b}_{k+3}(\cdot,N)x
\end{equation}
for any $x\in [0,N]$ and $\nu\in\mathcal{P}_{0}(\R)$ with the $\R_{+}$-valued locally bounded function $\hat{b}_{k+3}$ on $\R_{+}\times\R_{+}$ defined via
\begin{equation*}
\hat{b}_{k+3}(s,y) : =\sup_{x\in [0,y]}\sup_{\nu\in\mathcal{P}_{0}(\R)}\bigg|\sum_{i=1}^{k}b_{i}(s,y,\nu)x^{i-1}\bigg|,
\end{equation*}
which satisfies $\hat{b}_{k+3}(\cdot,y) \leq \sum_{i=1}^{k} \hat{b}_{i}(\cdot,y)y^{i-1}$ for all $y\geq 0$. We recall that $c_{p} = \frac{p-1}{2}$ and state an \emph{explicit $L^{p}$-growth estimate} for $\hat{I}^{(n)}$, which applies to $I^{(n)}$ for any $n\in\N$.

\begin{Proposition}\label{pr:EM-estimate}
Assume that~\eqref{co:3},~\eqref{co:6} and~\eqref{co:7} are satisfied. Then
\begin{equation*}
\E\big[|\hat{I}_{t}^{(n)}|^{p}\big] \leq e^{k_{p,j,n}t}\E\big[\xi_{0}^{p}\big] + e^{l_{p,j,n}t}t\max_{i=0,\dots,j}\hat{b}_{0}(t_{i,n},N(t_{i,n}))
\end{equation*}
for all $n\in\N$, $j\in\{0,\dots,k_{n}-1\}$ and $t\in [t_{j,n},t_{j+1,n}]$ with the two non-negative constants
\begin{align*}
k_{p,j,n} &:= \max_{i=0,\dots,j} \big((p-1)\hat{b}_{0} + p(\hat{b}_{k+3} + c_{p}l^{2})\big)(t_{i,n},N(t_{i,n})),
\\
l_{p,j,n} &:= \max_{i=0,\dots,j} \big(2(p-1)\big(\hat{b}_{0} + c_{p}l^{2}\big) + (2p-1)\hat{b}_{k+3}\big)(t_{i,n},N(t_{i,n})).
\end{align*}
\end{Proposition}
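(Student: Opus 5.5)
The plan is to work interval by interval on $[t_{j,n},t_{j+1,n}]$, apply It\^{o}'s formula to $|\hat{I}^{(n)}_t|^{p}$ using the frozen coefficients from~\eqref{eq:interpolated Euler--Maruyama scheme}, and then close the argument by a Gronwall-type induction over $j$.

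\textbf{Step 1: bounds on the frozen coefficients.} Fix $n$ and $j$, and write $X_j := (I_{t_{j,n}}^{(n)})^{+}\wedge N(t_{j,n})$, so that $0\le X_j\le |I^{(n)}_{t_{j,n}}|$. Let $a_j$ denote the frozen drift and $\sigma_j$ the frozen diffusion vector.
\begin{itemize}
\item By~\eqref{co:7} and~\eqref{eq:affine growth condition on the drift coefficient}, applied with $\nu = L_{j,n}\circ b_{k+1}(\cdot,N(t_{j,n}))^{-1}\in\mathcal{P}_0(\R)$, we get $|a_j|\le \hat b_0 + \hat b_{k+3}X_j$, evaluated at $(t_{j,n},N(t_{j,n}))$.
\item For the diffusion, $\overline f(t,x,N-x)=f(t,x^+,(N-x)^+)$. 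This vanishes if $x\le 0$ or $x\ge N$, by the boundary hypotheses in~\eqref{co:3}.
\item Otherwise~\eqref{co:6} gives $|\overline f|\le l\,x$. Hence in all cases $|\sigma_j|\le l(t_{j,n},N(t_{j,n}))X_j$.
\end{itemize}
Since $\hat I^{(n)}$ is built from Gaussian increments with coefficients that are polynomially bounded in earlier values, all moments are finite. The stochastic integral in It\^{o}'s formula is therefore a true martingale, which I would check by a short induction on $j$.

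\textbf{Step 2: It\^{o}'s formula and Young's inequality.} For $t\in[t_{j,n},t_{j+1,n}]$, write $b_0$, $b_{k+3}$, $l$ for the values at $(t_{j,n},N(t_{j,n}))$. It\^{o}'s formula gives
\begin{equation*}
\frac{\mathrm d}{\mathrm dt}\E\big[|\hat I_t^{(n)}|^p\big]\le \E\Big[p|\hat I_t^{(n)}|^{p-1}\big(b_0+b_{k+3}X_j\big)+pc_p|\hat I_t^{(n)}|^{p-2}l^2X_j^2\Big].
\end{equation*}
I then apply Young's inequality to each term:
\begin{itemize}
\item $p|y|^{p-1}\le (p-1)|y|^p+1$, which produces the terms $(p-1)b_0\,\E|\hat I_t|^p$ and $b_0$;
\item $p|y|^{p-1}|z|\le (p-1)|y|^p+|z|^p$;
\item $p|y|^{p-2}z^2\le (p-2)|y|^p+2|z|^p$.
\end{itemize}
This yields a differential inequality
\begin{equation*}
u'(t)\le \alpha_j u(t)+\beta_j u(t_{j,n})+b_0,\qquad u(t):=\E\big[|\hat I_t^{(n)}|^p\big],
\end{equation*}
with explicit coefficients $\alpha_j,\beta_j\ge 0$. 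Their sum is bounded by the bracket defining $l_{p,j,n}$. Moreover, the bracket defining $k_{p,j,n}$ is exactly what one gets if the $X_j$-terms are charged to $|\hat I_t|$, which corresponds to the continuous-time growth rate $\overline l_p$ of Proposition~\ref{pr:existence, uniqueness and a growth estimate 2} with $\hat b_{k+3}$ in place of $\hat b_{k+2}$.

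\textbf{Step 3: induction over $j$.} The claimed bound at $t_{j,n}$ is taken from step $j-1$. The constants are maxima over $i\le j$, so they are monotone in $j$, and the previous bound can be enlarged to the current constants. Integrating the linear inequality from $t_{j,n}$ to $t$ gives $u(t)\le e^{\alpha_j(t-t_{j,n})}u(t_{j,n})+(\dots)$. Inserting the induction hypothesis for $u(t_{j,n})$, the exponentials combine as $e^{k(t-t_{j,n})}e^{k t_{j,n}}=e^{kt}$, and the source terms accumulate to at most $e^{l t}\,t\max_i b_0$.

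\textbf{Main obstacle.} The delicate point is the bookkeeping of constants. The bound must be split so that the $\E[\xi_0^p]$ part only grows at rate $k_{p,j,n}$, while the lagged term $\beta_j u(t_{j,n})$ and the source $b_0$ are absorbed into the second summand with the larger rate $l_{p,j,n}$, which carries the factor $2$. To handle this, I would first try bounding $u(t_{j,n})\le \sup_{s\le t}u(s)$ and running Gronwall on $m(t):=\sup_{s\le t}u(s)$, with rate $\alpha_j+\beta_j\le l_{p,j,n}$, for the inhomogeneous part. In parallel I would track the homogeneous $\xi_0$-part separately with rate $k_{p,j,n}$. Doing the two separately is exactly what produces the two different exponents in the statement.
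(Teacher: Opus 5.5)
Your proposal is correct and is essentially the paper's proof. The same It\^o/Young splitting yields exactly the paper's constants: your $\alpha_j$ and $\beta_j$ are its $l_{p,j,n,1}$ and $l_{p,j,n,2}$, and the per-interval bound is propagated by induction over $j$; the paper only replaces your a priori moment/true-martingale argument with stopping-time localisation, Lemma~4.2 in~\cite{KalMeyPro24} and Fatou's lemma. One correction to your ``main obstacle'' paragraph: the lagged term $\beta_j u(t_{j,n})$ cannot be absorbed into the $\hat{b}_{0}$-summand, which may vanish, but goes into the first summand via $e^{\alpha_j h}+\beta_j\int_0^h e^{\alpha_j r}\,\mathrm{d}r\le e^{(\alpha_j+\beta_j)h}$ with $h:=t-t_{j,n}$, because $\alpha_j+\beta_j$ is exactly the bracket defining $k_{p,j,n}$ — so the factor $2$ in $l_{p,j,n}$ is mere slack, not something your bookkeeping has to produce.
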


\begin{Remark}
As the function $l$ in~\eqref{eq:linear growth condition on the diffusion coefficient} is locally bounded, we obtain the quantitative estimate
\begin{equation*}
\sup_{n\in\N}\max_{j= 0,\dots,k_{n}-1}\sup_{t\in [t_{j,n},t_{j+1,n}]}
\E\big[|\hat{I}_{t}^{(n)}|^{p}\big] \leq  e^{l_{p}T}\big(\E\big[\xi_{0}^{p}\big] + T\hat{c}_{0}\big)
\end{equation*}
with the constants $\hat{c}_{0}:= \sup_{n\in\N}\max_{i=0,\dots,k_{n}-1}\hat{b}_{0}(t_{i,n},N(t_{i,n}))$ and $l_{p} := \sup_{n\in\N}l_{p,k_{n} -1,n}$.
\end{Remark}

Let us next investigate the increments of the interpolated Euler--Maruyama scheme $\hat{I}^{(n)}$ along $\mathbb{T}_{n}$ for each $n\in\N$.

\begin{Lemma}\label{le:EM-estimate}
Let~\eqref{co:3},~\eqref{co:6} and~\eqref{co:7} hold. Then
\begin{equation*}
\E\big[|\hat{I}_{t}^{(n)} - I_{t_{j,n}}^{(n)}\big|^{p}\big]^{\frac{1}{p}} \leq m_{p,j,n}(t-t_{j,n})^{\frac{1}{2}}
\end{equation*}
for any $n\in\N$, $j\in\{0,\dots,k_{n}-1\}$ and $t\in [t_{j,n},t_{j+1,n}]$ with the non-negative constant
\begin{equation}\label{eq:error coefficient 1}
\begin{split}
m_{p,j,n} &:= T^{\frac{1}{2}}\big(\hat{b}_{0}(t_{j,n},N(t_{j,n})) + \hat{b}_{k+3}(t_{j,n},N(t_{j,n})) N(t_{j,n})\big)\\
&\quad + \E\big[\big(Z_{1}^{2} + \cdots + Z_{d}^{2}\big)^{\frac{p}{2}}\big]^{\frac{1}{p}}l(t_{j,n},N(t_{j,n}))N(t_{j,n}),
\end{split}
\end{equation}
where $Z_{1},\dots,Z_{d}$ are independent $\mathcal{N}(0,1)$-distributed random variables.
\end{Lemma}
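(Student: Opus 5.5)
Fix $n\in\N$, $j\in\{0,\dots,k_{n}-1\}$ and $t\in[t_{j,n},t_{j+1,n}]$, and write $X:=I_{t_{j,n}}^{(n)}$ and $\Delta:=t-t_{j,n}$. By~\eqref{eq:interpolated Euler--Maruyama scheme},
\begin{equation*}
\hat{I}_{t}^{(n)} - X = \sum_{i=0}^{k}\overline{b}_{i}(t_{j,n},N(t_{j,n}),L_{j,n})\big(X^{+}\wedge N(t_{j,n})\big)^{i}\Delta + \overline{f}\big(t_{j,n},X,N(t_{j,n})-X\big)\big(W_{t}-W_{t_{j,n}}\big).
\end{equation*}
Minkowski's inequality in $L^{p}$ lets me bound the drift term and the diffusion term separately. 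These two bounds will give the two summands of $m_{p,j,n}$ in~\eqref{eq:error coefficient 1}.

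\emph{Drift term.} Put $x:=X^{+}\wedge N(t_{j,n})\in[0,N(t_{j,n})]$. By~\eqref{eq:extended functions}, $\overline{b}_{i}(s,y,\nu)=b_{i}(s,y,\nu\circ b_{k+1}(\cdot,y)^{-1})$. The image measure $\nu\circ b_{k+1}(\cdot,y)^{-1}$ is supported in $[0,y]$, so it belongs to $\mathcal{P}_{0}(\R)$. The affine growth bound~\eqref{eq:affine growth condition on the drift coefficient}, which follows from~\eqref{co:7}, then holds pathwise and gives
\begin{equation*}
|\text{drift}|\leq\big(\hat{b}_{0}+\hat{b}_{k+3}N\big)(t_{j,n},N(t_{j,n}))\,\Delta .
\end{equation*}
Since $\Delta\leq T^{1/2}\Delta^{1/2}$, this is exactly the first summand of $m_{p,j,n}\Delta^{1/2}$.

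\emph{Diffusion term.} By~\eqref{co:6} in the form~\eqref{eq:linear growth condition on the diffusion coefficient}, I need the pathwise bound
\begin{equation*}
|\overline{f}(t_{j,n},X,N(t_{j,n})-X)|\leq l(t_{j,n},N(t_{j,n}))\,N(t_{j,n}).
\end{equation*}
This step needs care because $X$ need not lie in $[0,N(t_{j,n})]$, and $\overline{f}(s,x,y)=f(s,x^{+},y^{+})$.
\begin{itemize}
\item[] If $X\in[0,N(t_{j,n})]$, the bound follows directly from~\eqref{eq:linear growth condition on the diffusion coefficient}, since then $X\leq N(t_{j,n})$.
\item[] If $X<0$, then $X^{+}=0$, and $f(\cdot,0,y)=0$ from~\eqref{co:3} gives zero.
\item[] If $X>N(t_{j,n})$, then $(N(t_{j,n})-X)^{+}=0$, and $f(\cdot,x,0)=0$ from~\eqref{co:3} again gives zero.
\end{itemize}
So the bound holds pathwise in all cases, and $\overline{f}(\dots)$ is $\mathcal{F}_{t_{j,n}}$-measurable.

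Because $W_{t}-W_{t_{j,n}}$ is independent of $\mathcal{F}_{t_{j,n}}$ and has the law of $\Delta^{1/2}(Z_{1},\dots,Z_{d})$, I condition on $\mathcal{F}_{t_{j,n}}$ and use the Cauchy--Schwarz inequality for the row vector, $|\overline{f}\cdot(W_{t}-W_{t_{j,n}})|\leq|\overline{f}|\,|W_{t}-W_{t_{j,n}}|$. This yields
\begin{equation*}
\E\big[|\overline{f}\cdot(W_{t}-W_{t_{j,n}})|^{p}\big]^{1/p}\leq l(t_{j,n},N(t_{j,n}))\,N(t_{j,n})\,\E\big[(Z_{1}^{2}+\cdots+Z_{d}^{2})^{p/2}\big]^{1/p}\Delta^{1/2}.
\end{equation*}
Adding this to the drift bound gives the claim. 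The only delicate point is the boundary case analysis for $\overline{f}$ above, which uses the vanishing conditions in~\eqref{co:3}; the rest is routine.
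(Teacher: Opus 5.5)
Your proposal is correct and follows the paper's proof. Both use Minkowski's inequality in $L^{p}$, the affine growth bound~\eqref{eq:affine growth condition on the drift coefficient} for the drift, the linear growth bound~\eqref{eq:linear growth condition on the diffusion coefficient} with Cauchy--Schwarz for the diffusion, and the scaling $W_{t}-W_{t_{j,n}}\sim(t-t_{j,n})^{1/2}(Z_{1},\dots,Z_{d})$. Your case analysis for $I_{t_{j,n}}^{(n)}\notin[0,N(t_{j,n})]$ via the vanishing conditions in~\eqref{co:3} makes explicit a step the paper leaves implicit, and the conditioning on $\mathcal{F}_{t_{j,n}}$ is harmless but unnecessary, since the bound $l(t_{j,n},N(t_{j,n}))N(t_{j,n})$ on $|\overline{f}|$ is deterministic.
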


\begin{Remark}
This auxiliary result immediately implies that
\begin{equation*}
\sup_{n\in\N}\max_{j=0\dots,k_{n}-1}\sup_{t\in [t_{j,n},t_{j+1,n}]} \E\big[|\hat{I}_{t}^{(n)} - I_{t_{j,n}}^{(n)}\big|^{p}\big]^{\frac{1}{p}} \leq m_{p}|\mathbb{T}_{n}|^{\frac{1}{2}}
\end{equation*}
for the real constant $m_{p} := \sup_{n\in\N}\max_{j=0,\dots,k_{n}-1} m_{p,j,n}$.
\end{Remark}

To derive an intermediate error estimate for $\hat{I}^{(n)}$, we first require a \emph{Hölder continuity condition} on $b_{0},\dots,b_{k}$:
\begin{enumerate}[label=(C.\arabic*), ref=C.\arabic*, leftmargin=\widthof{(C.8)} + \labelsep]
\setcounter{enumi}{7}
\item\label{co:8} For each $i\in\{0,\dots,k\}$ there is a constant $\overline{b}_{i}\geq 0$ such that $|b_{i}(s,y,\nu) - b_{i}(\tilde{s},\tilde{y},\nu)|$ $\leq \overline{b}_{i}(|s-\tilde{s}|^{\frac{1}{2}} + |y-\tilde{y}|)$ for all $s,\tilde{s}\geq 0$, $y,\tilde{y}\geq 0$ and $\nu\in\mathcal{P}_{0}(\R)$.
\end{enumerate}

Under~\eqref{co:7} and~\eqref{co:8}, a \emph{continuity condition} for the drift coefficient of~\eqref{eq:SIS epidemic model} holds, which implies~\eqref{eq:partial Lipschitz condition on the drift coefficient}. Namely,
\begin{equation}\label{eq:continuity condition on the drift coefficient}
\begin{split}
\bigg|\sum_{i=0}^{k}b_{i}(s,N(s),\nu)&x^{i} - b_{i}(\tilde{s},N(\tilde{s}),\tilde{\nu})\tilde{x}^{i}\bigg| \leq  \overline{b}_{k+1}(N(\tilde{s}))|s-\tilde{s}|^{\frac{1}{2}}\\
&\quad + \big(\overline{b}_{k+1}(N(\tilde{s})) + \overline{b}_{k+2}(s,N(s),N(\tilde{s}))\big)|N(s) - N(\tilde{s})|\\
&\quad + \hat{b}_{k+4}(s,N(s))|x - \tilde{x}| + \hat{\lambda}_{k+1}(s,N(s))\vartheta(\nu,\tilde{\nu})
\end{split}
\end{equation}
for all $s,\tilde{s}\geq 0$, $x\in [0,N(s)]$, $\tilde{x}\in [0,N(\tilde{s})]$ and $\nu,\tilde{\nu}\in\mathcal{P}_{0}(\R)$, where $\overline{b}_{k+1}\colon\R_{+}\rightarrow\R_{+}$, $\overline{b}_{k+2}\colon \R_{+}\times\R_{+}\times\R_{+}\rightarrow\R_{+}$ and $\hat{b}_{k+4}\colon\R_{+}\times\R_{+}\rightarrow\R_{+}$ are given by
\begin{align*}
\overline{b}_{k+1}(y) &:= \sum_{i=0}^{k}\overline{b}_{i}y^{i},\quad
\overline{b}_{k+2}(\cdot,y,\tilde{y}) := \sup_{x\in [0,y\vee\tilde{y}]}\sup_{\nu\in\mathcal{P}_{0}(\R)}\bigg|\sum_{i=1}^{k}b_{i}(s,y,\nu)ix^{i-1}\bigg|,\\
\hat{b}_{k+4}(s,y) &:= \sup_{x\in [0,y]}\sup_{\nu\in\mathcal{P}_{0}(\R)}\bigg|\sum_{i=1}^{k}b_{i}(s,y,\nu)ix^{i-1}\bigg|.
\end{align*}
Here, we note that $\hat{b}_{k+4}(\cdot,y) \leq \sum_{i=1}^{k}\hat{b}_{i}(\cdot,y)iy^{i-1}$ for each $y\geq 0$. In addition, we impose a \emph{Hölder continuity condition on compact sets} on $f$:
\begin{enumerate}[label=(C.\arabic*), ref=C.\arabic*, leftmargin=\widthof{(C.9)} + \labelsep]
\setcounter{enumi}{8}
\item\label{co:9} For any $n\in\N$ there is $\lambda_{n,0}\geq 0$ such that $|f(s,x,y) - f(\tilde{s},x,y)| \leq \lambda_{n,0}|s-\tilde{s}|^{\frac{1}{2}}$ for any $s,\tilde{s}\geq 0$ and $x,y\in [0,n]$.
\end{enumerate}

\begin{Example}
Assume that the representation for $f$ in Example~\ref{ex:sums of power functions} holds and there is $\overline{g}\in\R_{+}^{d\times m}$ satisfying $|g_{i,j}(s) - g_{i,j}(\tilde{s})| \leq \overline{g}_{i,j}|s-\tilde{s}|^{\frac{1}{2}}$ for any $i\in\{1,\dots,d\}$, $j\in\{1,\dots,m\}$ and $s,\tilde{s}\geq 0$. Then~\eqref{co:9} is valid.
\end{Example}

From~\eqref{co:5} and~\eqref{co:9} we obtain the following \emph{continuity condition} for the diffusion coefficient of~\eqref{eq:SIS epidemic model}, which is stronger than~\eqref{eq:Lipschitz condition on the diffusion coefficient}:
\begin{equation}\label{eq:Hoelder condition on the diffusion coefficient}
\begin{split}
|f(s,x,N(s) - &x) - f(\tilde{s},\tilde{x},N(\tilde{s}) - \tilde{x})| \leq \lambda_{0}(N(\tilde{s}))|s-\tilde{s}|^{\frac{1}{2}}\\
&\quad + \lambda(s,N(s)\vee N(\tilde{s}))\bigg(\frac{1}{2}|N(s) - N(\tilde{s})| + |x-\tilde{x}|\bigg)
\end{split}
\end{equation}
for all $s,\tilde{s}\geq 0$, $x\in [0,N(s)]$ and $\tilde{x}\in [0,N(\tilde{s})]$, where the measurable locally bounded function $\lambda_{0}\colon\R_{+}\rightarrow\R_{+}$ is given by $\lambda_{0}(y) := \lambda_{\lceil y\rceil,0}$ for $y > 0$ and $\lambda_{0}(0) := 0$.

If in addition~\eqref{co:7} and~\eqref{co:8} hold, then for any $n\in\N$ and $j\in\{0,\dots,k_{n}-1\}$ we may introduce four $\R_{+}$-valued measurable bounded functions on $[t_{j,n},t_{j+1,n}]$ by
\begin{align*}
\delta_{p,j,n}^{(1)} &:=  \overline{b}_{k+1}(N(t_{j,n})) + 6c_{p}\lambda_{0}(N(t_{j,n}))^{2},\\
\delta_{p,j,n}^{(2)} &:=  \overline{b}_{k+1}(N(t_{j,n})) + \overline{b}_{k+2}(\cdot,N,N(t_{j,n})) + \frac{3}{2}c_{p}\lambda(\cdot,N\vee N(t_{j,n}))^{2},\\
\delta_{p,j,n}^{(3)} &:= \hat{b}_{k+4}(\cdot,N) + 12c_{p}\lambda(\cdot,N\vee N(t_{j,n}))^{2}
\end{align*}
and
\begin{equation}\label{eq:error coefficient 2}
\begin{split}
\lambda_{p,j,n} &:= (p-1)\big(2\overline{b}_{k+1}(N(t_{j,n})) + \overline{b}_{k+2}(\cdot,N,N(t_{j,n}))\big) + (2p-1)\hat{b}_{k+4}(\cdot,N)\\
&\quad + 3(p - 2)c_{p}\lambda_{0}(N(t_{j,n}))^{2} + 13(p-1)c_{p}\lambda(\cdot,N\vee N(t_{j,n}))^{2}.
\end{split}
\end{equation}
We also define a measurable bounded function $\delta_{p,j,n}\colon [t_{j,n},t_{j+1,n}]\rightarrow\R_{+}$, by using the coefficient~\eqref{eq:error coefficient 1}, via
\begin{equation}\label{eq:error coefficient 3}
\delta_{p,j,n}(s) := \big(\delta_{p,j,n}^{(1)}(s) + \delta_{p,j,n}^{(3)}(s)m_{p,j,n}^{p}\big)|s - t_{j,n}|^{\frac{p}{2}} +  \delta_{p,j,n}^{(2)}(s)|N(s) - N(t_{j,n})|^{p}.
\end{equation}
Then the \emph{intermediate $L^{p}$-error estimate} for $\hat{I}^{(n)}$ in terms of $(L_{j,n})_{j\in\{0,\dots,k_{n}-1\}}$ takes the following form.

\begin{Proposition}\label{pr:intermediate error estimate}
Let~\eqref{co:3},~\eqref{co:5} and~\eqref{co:7}--\eqref{co:9} hold. Then the unique strong solution $I$ to~\eqref{eq:SIS epidemic model} with $I_{0} = \xi_{0}$ a.s.~satisfies
\begin{align*}
u(t)\E\big[|\hat{I}_{t}^{(n)} &- I_{t}|^{p}\big] \leq 
u(t_{j,n})\E\big[|I_{t_{j,n}}^{(n)} - I_{t_{j,n}}|^{p}\big]\\
&\quad  + \int_{t_{j,n}}^{t}u(s)\big(\delta_{p,j,n}(s) + \hat{\lambda}_{k+1}(s,N(s))p\E\big[|\hat{I}_{s}^{(n)} - I_{s}|^{p-1}\mathcal{W}_{p}(\mathcal{L}(I_{s}),L_{j,n})\big]\big)\,\mathrm{d}s\\
&\quad + \int_{t_{j,n}}^{t}\big(\dot{u}(s) + u(s)\lambda_{p,j,n}(s)\big)\E\big[|\hat{I}_{s}^{(n)} - I_{s}|^{p}\big]\,\mathrm{d}s
\end{align*}
for all $n\in\N$, $j\in\{0,\dots,k_{n}-1\}$ and $t\in [t_{j,n},t_{j+1,n}]$ and each absolutely continuous function $u\colon [t_{j,n},t_{j+1,n}]\rightarrow\R_{+}$.
\end{Proposition}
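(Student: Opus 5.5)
We apply Itô's formula to $u(t)|\hat{I}^{(n)}_{t} - I_{t}|^{p}$ on $[t_{j,n},t]$. This is legitimate since both $\hat{I}^{(n)}$ and $I$ are continuous semimartingales and $x\mapsto|x|^{p}$ is $C^{2}$ for $p\geq 2$. Set $\Delta_{s} := \hat{I}^{(n)}_{s} - I_{s}$. Its drift is $\overline{B}_{j} - B_{s}$, where $\overline{B}_{j}$ denotes the frozen Euler drift $\sum_{i}\overline{b}_{i}(t_{j,n},N(t_{j,n}),L_{j,n})((I^{(n)}_{t_{j,n}})^{+}\wedge N(t_{j,n}))^{i}$ and $B_{s} := \sum_{i} b_{i}(s,N(s),\mathcal{L}(I_{s}))I_{s}^{i}$. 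Its diffusion is $\overline{F}_{j} - F_{s}$ with analogous notation. This gives
\begin{equation*}
u(t)|\Delta_{t}|^{p} = u(t_{j,n})|\Delta_{t_{j,n}}|^{p} + \int_{t_{j,n}}^{t}\big(\dot{u}|\Delta_{s}|^{p} + u\,p|\Delta_{s}|^{p-2}\Delta_{s}(\overline{B}_{j} - B_{s}) + u\,c_{p}p|\Delta_{s}|^{p-2}|\overline{F}_{j} - F_{s}|^{2}\big)\,\mathrm{d}s + M_{t}.
\end{equation*}
Here $M$ is a local martingale. Proposition~\ref{pr:EM-estimate} gives $p$th moment bounds for $\hat{I}^{(n)}$, and $0\leq I\leq N$ holds by Proposition~\ref{pr:existence, uniqueness and a growth estimate 2}. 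Together with \eqref{co:6}, this makes $M$ a true martingale after localisation, so taking expectations removes it. It then suffices to bound the two integrands.

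\textbf{Drift term.} Under \eqref{co:3}, \eqref{co:5} and \eqref{co:7}, the Euler drift arguments lie in $[0,N(t_{j,n})]$ and $\mathcal{L}(I_{s})$ has support in $[0,N(s)]$, so $\overline{b}_{i}$ coincides with $b_{i}$ there. By \eqref{eq:continuity condition on the drift coefficient}, with $x = I_{s}$, $\tilde{x} = (I^{(n)}_{t_{j,n}})^{+}\wedge N(t_{j,n})$ and $\tilde{s} = t_{j,n}$, we bound $|\overline{B}_{j} - B_{s}|$ by the sum of four terms:
\begin{itemize}
\item $\overline{b}_{k+1}(N(t_{j,n}))|s-t_{j,n}|^{1/2}$;
\item $(\overline{b}_{k+1}+\overline{b}_{k+2})|N(s)-N(t_{j,n})|$;
\item $\hat{b}_{k+4}(s,N(s))|I_{s}-\tilde{x}|$;
\item $\hat{\lambda}_{k+1}\vartheta(\mathcal{L}(I_{s}),L_{j,n})$.
\end{itemize}
We then use \eqref{eq:domination condition} to replace $\vartheta$ by $\mathcal{W}_{p}$. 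The truncation map is $1$-Lipschitz, so $|I_{s}-\tilde{x}|\leq|\Delta_{s}| + |\hat{I}^{(n)}_{s} - I^{(n)}_{t_{j,n}}|$. We keep the Wasserstein term as it stands, giving the cross term $\hat{\lambda}_{k+1}p\E[|\Delta_{s}|^{p-1}\mathcal{W}_{p}]$. Every other product $|\Delta|^{p-1}a$ is split with Young's inequality $|\Delta|^{p-1}a\leq \frac{p-1}{p}|\Delta|^{p} + \frac{1}{p}a^{p}$. This produces the coefficients $(p-1)(2\overline{b}_{k+1}+\overline{b}_{k+2})$ and $p\hat{b}_{k+4} + (p-1)\hat{b}_{k+4}=(2p-1)\hat{b}_{k+4}$ in front of $|\Delta|^{p}$. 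It also produces the remainder terms $\overline{b}_{k+1}|s-t_{j,n}|^{p/2}$, $(\overline{b}_{k+1}+\overline{b}_{k+2})|N(s)-N(t_{j,n})|^{p}$ and $\hat{b}_{k+4}|\hat{I}^{(n)}_{s}-I^{(n)}_{t_{j,n}}|^{p}$. By Lemma~\ref{le:EM-estimate}, the expectation of the last one is at most $m_{p,j,n}^{p}|s-t_{j,n}|^{p/2}$.

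\textbf{Diffusion term.} We use \eqref{eq:Hoelder condition on the diffusion coefficient}. Since $\overline{f}$ evaluates $f$ at positive parts, the same triangle-inequality splitting applies. It bounds $|\overline{F}_{j}-F_{s}|$ by
\begin{equation*}
\lambda_{0}|s-t_{j,n}|^{1/2} + \lambda\big(\tfrac12|N(s)-N(t_{j,n})| + |\Delta_{s}| + |\hat{I}^{(n)}_{s}-I^{(n)}_{t_{j,n}}|\big),
\end{equation*}
with $\lambda$ evaluated at $N(s)\vee N(t_{j,n})$ as in $\delta^{(2)}_{p,j,n}$, $\delta^{(3)}_{p,j,n}$ and \eqref{eq:error coefficient 2}. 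Squaring with $(a_{1}+\dots+a_{4})^{2}\leq 4\sum a_{i}^{2}$, or a weighted variant, gives constants of the form $3,6,12,13$. We then apply Young's inequality with exponents $\frac{p}{p-2}$ and $\frac{p}{2}$, namely $|\Delta|^{p-2}a^{2}\leq\frac{p-2}{p}|\Delta|^{p}+\frac{2}{p}a^{p}$. This yields the terms $3(p-2)c_{p}\lambda_{0}^{2}$ and $13(p-1)c_{p}\lambda^{2}$ in $\lambda_{p,j,n}$, and the remainders $6c_{p}\lambda_{0}^{2}|s-t_{j,n}|^{p/2}$, $\frac32 c_{p}\lambda^{2}|N(s)-N(t_{j,n})|^{p}$ and $12c_{p}\lambda^{2}m^{p}_{p,j,n}|s-t_{j,n}|^{p/2}$. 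Collecting all terms gives exactly $\delta_{p,j,n}$ and $\lambda_{p,j,n}$.

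The main obstacle is the bookkeeping needed to land on exactly the stated constants. This includes choosing the weights in the squared-sum inequality so that the coefficients $3/2$, $6$, $12$ and $13$ come out, and keeping track of which arguments $N(s)$ or $N(t_{j,n})$ the coefficients are evaluated at. A secondary point is justifying that the stochastic integral has zero expectation, which follows from the uniform moment bounds above.
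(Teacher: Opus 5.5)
Your route is the paper's. The paper also bounds the Itô integrand $p|\Delta_s|^{p-2}\bigl(\Delta_s(\overline B_j-B_s)+c_p|\overline F_j-F_s|^2\bigr)$ via extended versions of \eqref{eq:continuity condition on the drift coefficient} and \eqref{eq:Hoelder condition on the diffusion coefficient}. It then splits $|I^{(n)}_{t_{j,n}}-I_s|\le|\Delta_s|+|\hat I^{(n)}_s-I^{(n)}_{t_{j,n}}|$ and finishes with Young's inequality and Lemma~\ref{le:EM-estimate}; the only difference is that it takes the Itô step from \cite[Lemma~4.2]{KalMeyPro24}. Your drift bookkeeping reproduces the paper's coefficients.

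The step that fails as written is the measure argument. By \eqref{eq:extended functions}, the scheme uses $\overline b_i(t_{j,n},N(t_{j,n}),L_{j,n})=b_i(t_{j,n},N(t_{j,n}),L_{j,n}\circ T^{-1})$ with $T:=b_{k+1}(\cdot,N(t_{j,n}))$, not $b_i(\cdot,\cdot,L_{j,n})$. Since $L_{j,n}$ is an arbitrary $\mathcal P_p(\R)$-valued map, $\vartheta(\mathcal L(I_s),L_{j,n})$ is not the relevant quantity, and in general it is not even defined. Even the empirical measure in Theorem~\ref{thm:strong error estimate} may charge points outside $[0,N(t_{j,n})]$.

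What you need is $\vartheta(\mathcal L(I_s),L_{j,n}\circ T^{-1})\le\mathcal W_p(\mathcal L(I_s)\circ T^{-1},L_{j,n}\circ T^{-1})\le\mathcal W_p(\mathcal L(I_s),L_{j,n})$. This uses \eqref{eq:domination condition}, the $1$-Lipschitz property of $T$, and $\mathcal L(I_s)\circ T^{-1}=\mathcal L(I_s)$, and it is the paper's argument. That last identity requires $I_s\le N(t_{j,n})$. When $N(s)>N(t_{j,n})$ you only get $\mathcal W_p(\mathcal L(I_s),L_{j,n})+(N(s)-N(t_{j,n}))^{+}$, and the extra $\hat\lambda_{k+1}(s,N(s))(N(s)-N(t_{j,n}))^{+}$ is not contained in $\delta^{(2)}_{p,j,n}$. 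The paper's proof passes over this too, since it asserts its extended drift inequality for every $\nu\in\mathcal P_0(\R)$. The issue is void for nonincreasing $N$, and in Theorem~\ref{thm:strong error estimate} it changes only constants, not the rate.

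There are two smaller points.

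First, your bound $|I_s-\tilde x|\le|I_s-I^{(n)}_{t_{j,n}}|$ for the truncated $\tilde x$ has the same defect when $I_s>N(t_{j,n})$. Here no new term arises. Clamp $I^{(n)}_{t_{j,n}}$ to $[0,N(s)]$ for the $\hat b_{k+4}$-term. Then charge the distance between the clamps at levels $N(s)$ and $N(t_{j,n})$, which is at most $|N(s)-N(t_{j,n})|$, to the $\overline b_{k+2}$-term; its supremum over $[0,N(s)\vee N(t_{j,n})]$ covers it.

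Second, in the diffusion term, $(a_1+\dots+a_4)^2\le4\sum a_i^2$ would give $4(p-2)$ and $8$ in place of $3(p-2)$ and $6$. The stated constants come from $(a+b+c)^2\le3(a^2+b^2+c^2)$ with $c=\lambda|I^{(n)}_{t_{j,n}}-I_s|$, followed by $c^2\le2\lambda^2|\Delta_s|^2+2\lambda^2|\hat I^{(n)}_s-I^{(n)}_{t_{j,n}}|^2$, i.e.\ weights $3,3,6,6$. This yields $3(p-2)c_p\lambda_0^2$, $6c_p\lambda_0^2$, $\tfrac32c_p\lambda^2$, $12c_p\lambda^2$ and $\bigl(12(p-1)+\tfrac34(p-2)\bigr)c_p\lambda^2\le13(p-1)c_p\lambda^2$.
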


Lastly, Proposition~\ref{pr:intermediate error estimate} allows us to establish a strong error estimate for the interpolated Euler--Maruyama scheme applied to an \emph{interacting particle system approximating~\eqref{eq:SIS epidemic model}}, by choosing $L_{j,n}$ as \emph{empirical measures} for any $n\in\N$ and $j\in\{0,\dots,k_{n}-1\}$.

To this end, let $(\xi_{\ell})_{\ell\in\N}$ be a sequence of independent and identically distributed random variables such that $\mathcal{L}(\xi_{1}) = \mathcal{L}(\xi_{0})$ and $(W^{(\ell)})_{\ell\in\N}$ be a sequence of independent standard $d$-dimensional $\mathbb{F}$-Brownian motions. For each $\ell\in\N$ we consider the McKean--Vlasov SDE~\eqref{eq:SIS epidemic model} driven by $W^{(\ell)}$ instead of $W$:
\begin{equation}\label{eq:copy of the SIS epidemic model}
\mathrm{d}I_{t}^{(\ell)} = \sum_{i=0}^{k}b_{i}\big(t,N(t),\mathcal{L}(I_{t}^{(\ell)})\big)\big(I_{t}^{(\ell)}\big)^{i}\,\mathrm{d}t + f\big(t,I_{t}^{(\ell)},N(t) - I_{t}^{(\ell)}\big)\,\mathrm{d}W_{t}^{(\ell)}
\end{equation}
for $t\geq 0$ with initial condition $I_{0}^{(\ell)} = \xi_{\ell}$. For each $n\in\N$ we take $M_{n}\in\N$ and obtain an $M_{n}$-dimensional system~\eqref{eq:copy of the SIS epidemic model} of SDEs once the distribution $\mathcal{L}(I_{t}^{(\ell)})$ in~\eqref{eq:copy of the SIS epidemic model} is replaced by the empirical measure $\frac{1}{M_{n}}\sum_{m=1}^{M_{n}}\delta_{I_{t}^{(m)}}$ for all $\ell = 1,\dots,M_{n}$.

The \emph{continuous-time Euler--Maruyama scheme} along $\mathbb{T}_{n}$ for this system is given by the $\mathbb{F}$-adapted right-continuous processes $I^{(n,1)},\dots,I^{(n,M_{n})}\colon [0,T]\times\Omega\rightarrow\R$ first recursively defined by $I_{0}^{(n,\ell)} := \xi_{\ell}$ and
\begin{equation}\label{eq:Euler--Maruyama scheme for the particle system}
\begin{split}
I_{t_{j+1},n}^{(n,\ell)} &:= I_{t_{j},n}^{(n,\ell)} + \sum_{i=0}^{k}\overline{b}_{i}(t_{j,n},N(t_{j,n}),L_{j,n})\big((I_{t_{j,n}}^{(n,\ell)})^{+}\wedge N(t_{j,n})\big)^{i}(t_{j+1,n} - t_{j,n})\\
&\quad + \sum_{i=1}^{d}\overline{f}_{i}\big(t_{j,n},I_{t_{j,n}}^{(n,\ell)},N(t_{j,n}) - I_{t_{j,n}}^{(n,\ell)}\big)(W_{t_{j+1},n}^{(\ell,i)} - W_{t_{j,n}}^{(\ell,i)})
\end{split}
\end{equation}
with the empirical measure
\begin{equation}\label{eq:empirical measure of interacting particles}
L_{j,n} = \frac{1}{M_{n}}\sum_{m=1}^{M_{n}}\delta_{I_{t_{j,n}}^{(n,m)}}
\end{equation}
for all $\ell\in\{1,\dots,M_{n}\}$ and $j\in \{0,\dots,k_{n}-1\}$ and then constantly extended by setting $I_{t}^{(n,\ell)} := I_{t_{j,n}}^{(n,\ell)}$ for any $t\in ]t_{j,n},t_{j+1,n}[$. Here, $W^{(\ell,i)}$ denotes the $i$th coordinate of $W^{(\ell)}$ for any $i\in\{1,\dots,d\}$. The corresponding \emph{interpolated Euler--Maruyama scheme} is represented by the unique $\mathbb{F}$-semimartingales $\hat{I}^{(n,1)},\dots,\hat{I}^{(n,M_{n})}$ satisfying
\begin{equation}\label{eq:interpolated Euler--Maruyama scheme for the particle system}
\begin{split}
\hat{I}_{t}^{(n,\ell)} &= I_{t_{j,n}}^{(n,\ell)} + \sum_{i=0}^{k}\overline{b}_{i}(t_{j,n},N(t_{j,n}),L_{j,n})\big((I_{t_{j,n}}^{(n,\ell)})^{+}\wedge N(t_{j,n})\big)^{i}(t - t_{j,n})\\
&\quad + \sum_{i=1}^{d}\overline{f}_{i}\big(t_{j,n},I_{t_{j,n}}^{(n,\ell)},N(t_{j,n}) - I_{t_{j,n}}^{(n,\ell)}\big)(W_{t}^{(\ell,i)} - W_{t_{j,n}}^{(\ell,i)})
\end{split}
\end{equation}
with the same empirical measure~\eqref{eq:empirical measure of interacting particles} for all $\ell\in\{1,\dots,M_{n}\}$, $j\in\{0,\dots,k_{n}-1\}$ and $t\in [t_{j,n},t_{j+1,n}]$. We notice that, under~\eqref{co:3},~\eqref{co:5} and~\eqref{co:7}, Proposition~\ref{pr:existence, uniqueness and a growth estimate} yields a unique strong solution $I^{(\ell)}$ to~\eqref{eq:copy of the SIS epidemic model} with $I_{0}^{(\ell)} = \xi_{l}$ a.s.~for each $\ell\in\N$.

Moreover, as Proposition~\ref{pr:pathwise uniqueness} ensures pathwise uniqueness for~\eqref{eq:copy of the SIS epidemic model}, $(I_{t}^{(\ell)})_{\ell\in\N}$ is a sequence of independent and identically distributed random variables for any $t\geq 0$. Hence,~\cite[Theorem~1]{FouGui15} yields for any $q > p$ with $q\neq 2p$ a constant $c_{p,q} > 0$ that only depends on $p$ and $q$ and is explicitly computed there such that
\begin{equation}\label{eq:estimate for the empirical measure}
\E\bigg[\mathcal{W}_{p}\bigg(\mathcal{L}(I_{s}^{(1)}),\frac{1}{M_{n}}\sum_{\ell=1}^{M_{n}}\delta_{I_{s}^{(\ell)}}\bigg)^{p}\bigg] \leq c_{p,q}\E\big[I_{s}^{q}\big]^{\frac{p}{q}}\big(M_{n}^{-\frac{1}{2}} + M_{n}^{\frac{p}{q} - 1}\big)
\end{equation}
for all $s\geq 0$. If in addition~\eqref{co:8} and~\eqref{co:9} hold, then for any $j\in\{0,\dots,k_{n}-1\}$ we may define measurable bounded functions $\hat{\delta}_{p,j,n},\hat{\lambda}_{p,j,n}\colon [t_{j,n},t_{j+1,n}]\rightarrow\R_{+}$ by
\begin{align*}
\hat{\lambda}_{p,j,n} &:= \max_{i=0,\dots,j}\lambda_{p,i,n}+ (3p-2)\hat{\lambda}_{k+1}(\cdot,N),\\
\hat{\delta}_{p,j,n} &:= \max_{i=0,\dots,j}\delta_{p,i,n} + \hat{\lambda}_{k+1}(s,N(s))m_{p,i,n}^{p}(s - t_{i,n})^{\frac{p}{2}}
\end{align*}
in terms of the coefficients~\eqref{eq:error coefficient 1},~\eqref{eq:continuity condition on the drift coefficient},~\eqref{eq:error coefficient 2} and~\eqref{eq:error coefficient 3}. We notice that if $N$ is $\alpha$-Hölder continuous on $[0,T]$ for some $\alpha\in ]0,\frac{1}{2}]$, that is, there is $\hat{c}_{\alpha} \geq 0$ such that
\begin{equation}\label{eq:Hoelder condition on the function N}
|N(s) - N(t)| \leq \hat{c}_{\alpha}|s - t|^{\alpha}\quad\text{for all $s,t\in [0,T]$,}
\end{equation}
then~\eqref{eq:error coefficient 3} yields an explicit constant $\hat{c}_{\alpha,0} \geq 0$ such that $\sup_{s\in [0,T]}\hat{\delta}_{p,k_{n}-1,n}(s)$ $\leq \hat{c}_{\alpha,0}|\mathbb{T}_{n}|^{\alpha p}$ for all $n\in\N$. Finally, we set
\begin{equation*}
\hat{c}_{p} := e^{\frac{1}{p}\int_{0}^{T}\hat{\lambda}_{p,k_{n}-1,n}(s)\,\mathrm{d}s}\quad\text{and}\quad \hat{c}_{p,0} := \int_{0}^{T}\hat{\lambda}_{k+1}(s,N(s))N(s)^{p}\,\mathrm{d}s.
\end{equation*}
These preparations lead us to an \emph{explicit strong $L^{p}$-error estimate} for the interpolated Euler--Maruyama scheme $\hat{I}^{(n,1)},\dots,\hat{I}^{(n,M_{n})}$ applied to an interacting particle system.

\begin{Theorem}\label{thm:strong error estimate}
Let~\eqref{co:3},~\eqref{co:5} and~\eqref{co:7}--\eqref{co:9} hold, and for each $\ell\in\N$ let $I^{(\ell)}$ denote the unique strong solution to~\eqref{eq:copy of the SIS epidemic model} with $I_{0}^{(\ell)} = \xi_{\ell}$ a.s. Then
\begin{equation*}
\max_{\ell=1,\dots,M_{n}}\E\big[|\hat{I}_{t}^{(n,\ell)} - I_{t}^{(\ell)}|^{p}\big] \leq e^{\int_{0}^{t}\hat{\lambda}_{p,j,n}(s)\,\mathrm{d}s}\int_{0}^{t}\hat{\delta}_{p,j,n}(s) + 2c_{p,q}\hat{\lambda}_{k+1}(s,N(s))N(s)^{p}M_{n}^{-\frac{1}{2}}\,\mathrm{d}s
\end{equation*}
for all $n\in\N$, $j\in \{0,\dots,k_{n}-1\}$, $t\in[t_{j,n}, t_{j+1,n}]$ and $q > 2p$. In particular, if~\eqref{eq:Hoelder condition on the function N} and $M_{n}\geq \hat{c}_{0}|\mathbb{T}_{n}|^{-2\alpha p}$ hold for all $n\in\N$ and some $\alpha\in ]0,\frac{1}{2}]$ and $\hat{c}_{0},\hat{c}_{\alpha} \geq 0$, then
\begin{equation*}
\sup_{t\in[0,T]}\max_{\ell=1,\dots,M_{n}}\E\big[|\hat{I}_{t}^{(n,\ell)} - I_{t}^{(\ell)}|^{p}\big]^{\frac{1}{p}} \leq \hat{c}_{p,q,\alpha}|\mathbb{T}_{n}|^{\alpha}
\end{equation*}
for all $n\in\N$ and $q > 2p$, where $\hat{c}_{p,q,\alpha} := \hat{c}_{p}(\hat{c}_{\alpha,0}T + 2c_{p,q}\hat{c}_{0}^{-1/2}\hat{c}_{p,0})^{1/p}$.
\end{Theorem}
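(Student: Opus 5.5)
The plan is to apply Proposition~\ref{pr:intermediate error estimate} to each particle $\ell\in\{1,\dots,M_{n}\}$, with $\hat{I}^{(n)}$ replaced by $\hat{I}^{(n,\ell)}$, the Brownian motion $W$ by $W^{(\ell)}$, and $L_{j,n}$ the empirical measure~\eqref{eq:empirical measure of interacting particles}. This is admissible because that proposition only uses that $L_{j,n}$ is $\mathcal{F}_{t_{j,n}}$-measurable. We then chain the resulting one-step inequalities over $j$ and close the argument with a Gronwall argument. The one new ingredient is a bound on the mean-field term $\E[|\hat{I}^{(n,\ell)}_{s}-I^{(\ell)}_{s}|^{p-1}\mathcal{W}_{p}(\mathcal{L}(I^{(\ell)}_{s}),L_{j,n})]$.

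For this term, Young's inequality gives $p a^{p-1}b\le (p-1)a^{p}+b^{p}$. For $b$ we use the triangle inequality in $\mathcal{W}_{p}$ through the intermediate measures $\frac{1}{M_{n}}\sum_{m}\delta_{I_{s}^{(m)}}$ and $\frac{1}{M_{n}}\sum_m \delta_{I^{(m)}_{t_{j,n}}}$, but coupling particle by particle is more efficient. Coupling $L_{j,n}$ with $\frac{1}{M_{n}}\sum_{m}\delta_{I_{s}^{(m)}}$ through the index $m$ gives
\begin{equation*}
\mathcal{W}_{p}\Big(\tfrac{1}{M_{n}}\textstyle\sum_{m}\delta_{I_{s}^{(m)}},L_{j,n}\Big)^{p}\le \tfrac{1}{M_{n}}\textstyle\sum_{m}|I_{t_{j,n}}^{(n,m)}-I_{s}^{(m)}|^{p}.
\end{equation*}
Splitting further via $\hat{I}^{(n,m)}_{s}$ and using Lemma~\ref{le:EM-estimate} produces $m_{p,j,n}^{p}(s-t_{j,n})^{p/2}$ plus $\max_{m}\E[|\hat{I}^{(n,m)}_{s}-I^{(m)}_{s}|^{p}]$. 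The remaining piece is the empirical-measure error~\eqref{eq:estimate for the empirical measure}. Since $0\le I^{(1)}\le N$ by Proposition~\ref{pr:existence, uniqueness and a growth estimate}, we have $\E[I_{s}^{q}]^{p/q}\le N(s)^{p}$, and for $q>2p$ it follows that $M_{n}^{p/q-1}\le M_{n}^{-1/2}$. This gives the bound $2c_{p,q}N(s)^{p}M_{n}^{-1/2}$. The constants in front of $\max_{m}\E|\cdot|^{p}$, namely $(p-1)$ from Young plus $2^{p-1}$-type factors from the splits, are absorbed into the term $(3p-2)\hat\lambda_{k+1}$ in $\hat\lambda_{p,j,n}$. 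I expect matching this exact constant to require a careful convex splitting, such as $(a+b+c)^{p}$ with weights or a Minkowski-in-$L^{p}$ step, rather than crude bounds.

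Next we take the maximum over $\ell$; all right-hand sides are symmetric in $\ell$. We choose $u(s)=e^{-\int_{0}^{s}\hat{\lambda}_{p,j,n}}$, so that the term $\dot u+u\lambda_{p,j,n}$ combined with the absorbed mean-field part is nonpositive. Then $v(t):=u(t)\max_{\ell}\E|\hat{I}^{(n,\ell)}_{t}-I^{(\ell)}_{t}|^{p}$ satisfies $v(t)\le v(t_{j,n})+\int_{t_{j,n}}^{t}u(\hat\delta+2c_{p,q}\hat\lambda_{k+1}N^{p}M_{n}^{-1/2})\,ds$. There is one subtlety: the maximum over $\ell$ of a sum is bounded by the sum of the maxima, and the absorption must be done before the maximum is taken, using that the same $\max_{m}$ appears on the right. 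Telescoping over $i=0,\dots,j$ with $v(0)=0$ (since $I_{0}^{(n,\ell)}=\xi_{\ell}=I_{0}^{(\ell)}$) and $u\le1$, and taking the monotone maxima over $i\le j$ built into $\hat\lambda_{p,j,n}$ and $\hat\delta_{p,j,n}$ so that one $u$ works on all of $[0,t]$, yields the first estimate.

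For the rate statement, we insert $\sup_{s}\hat\delta_{p,k_{n}-1,n}\le\hat{c}_{\alpha,0}|\mathbb{T}_{n}|^{\alpha p}$ together with $M_{n}^{-1/2}\le\hat{c}_{0}^{-1/2}|\mathbb{T}_{n}|^{\alpha p}$. We bound $e^{\int_{0}^{t}\hat\lambda}\le\hat{c}_{p}^{p}$ and $\int_{0}^{t}\hat\lambda_{k+1}N^{p}\le\hat{c}_{p,0}$, then take $p$th roots. The main obstacle is the mean-field absorption step in the second paragraph.
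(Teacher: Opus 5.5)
Your route is the paper's: Proposition~\ref{pr:intermediate error estimate} for each particle with the empirical measure, the triangle inequality through $\frac{1}{M_{n}}\sum_{m}\delta_{I_{s}^{(m)}}$, the index-wise coupling split at $\hat{I}_{s}^{(n,m)}$, \eqref{eq:estimate for the empirical measure} with $I\leq N$ and $q>2p$, Lemma~\ref{le:EM-estimate}, then Gronwall and induction over $j$. However, the two steps you leave open are exactly where the stated constants come from, and neither closes as written.

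\textbf{The constants.} A single unweighted Young step $pa^{p-1}b\leq(p-1)a^{p}+b^{p}$, followed by any convexity bound $(b_{1}+b_{2}+b_{3})^{p}\leq\sum_{i}w_{i}^{1-p}b_{i}^{p}$, can never give coefficient $1$ on both source terms, since that would require $w_{1}=w_{3}=1$. So you would not reach $\hat{\delta}_{p,j,n}$ and $2c_{p,q}\hat{\lambda}_{k+1}N^{p}M_{n}^{-1/2}$ as stated. The paper keeps the triangle inequality unpowered: $\mathcal{W}_{p}(\mathcal{L}(I_{s}^{(\ell)}),L_{j,n})\leq b_{1}+b_{2}+b_{3}$. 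Here $b_{1}$ is the empirical-measure error, $b_{2}:=(\frac{1}{M_{n}}\sum_{m}|I_{s}^{(m)}-\hat{I}_{s}^{(n,m)}|^{p})^{1/p}$ and $b_{3}:=(\frac{1}{M_{n}}\sum_{m}|\hat{I}_{s}^{(n,m)}-I_{t_{j,n}}^{(n,m)}|^{p})^{1/p}$. It then applies Young to each summand separately, with $a:=|\hat{I}_{s}^{(n,\ell)}-I_{s}^{(\ell)}|$:
\[
p\,a^{p-1}(b_{1}+b_{2}+b_{3})\leq 3(p-1)a^{p}+b_{1}^{p}+b_{2}^{p}+b_{3}^{p}.
\]
This equals a weighted Young step with factor $3$ followed by equal-weight convexity, so your instinct about weights was right, but the weight must sit in Young's inequality. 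The sources $\E[b_{1}^{p}]\leq 2c_{p,q}N(s)^{p}M_{n}^{-1/2}$ and $\E[b_{3}^{p}]\leq m_{p,j,n}^{p}(s-t_{j,n})^{p/2}$ then carry coefficient $1$. The term $\E[b_{2}^{p}]$ supplies the last $1$ in $3(p-1)+1=3p-2$.

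\textbf{The absorption with $\max_{m}$.} Set $E_{m}(s):=\E[|\hat{I}_{s}^{(n,m)}-I_{s}^{(m)}|^{p}]$. Since $\dot{u}=-u\hat{\lambda}_{p,j,n}\leq 0$, you have $\dot{u}E_{\ell}\geq\dot{u}\max_{m}E_{m}$, not $\leq$. After your choice of $u$, the $\ell$-th inequality therefore retains the term $u\hat{\lambda}_{k+1}(\max_{m}E_{m}-E_{\ell})\geq 0$, which cannot be dropped, so your inequality for $v$ does not follow.

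The paper uses exchangeability instead. The pairs $(\xi_{m},W^{(m)})$ are i.i.d., each $I^{(m)}$ is the same functional of its pair, and the scheme is symmetric in the particles. Hence $I_{s}^{(m)}-\hat{I}_{s}^{(n,m)}$ are identically distributed in $m$, so $\E[b_{2}^{p}]=E_{\ell}(s)$ for every $\ell$. The absorption into $(3p-2)\hat{\lambda}_{k+1}$ is then exact particle by particle, and no maximum is needed.

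Alternatively, take $u\equiv 1$, bound $E_{\ell}\leq\max_{m}E_{m}$ in every term on the right, maximise over $\ell$, and apply the classical Gronwall lemma to $\max_{m}E_{m}$. With these two repairs the rest of your proposal goes through, including the telescoping with $v(0)=0$ and the rate statement.
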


\subsection{Simulation of the epidemic model}\label{se:4.2}

To illustrate the dynamics of our representative mean-field SIS model~\eqref{eq:a representative SIS epidemic model}, we simulate the extended law-dependent version of the model~\cite{GraGreHuMaoPan11} discussed in the introduction. In this setting, the McKean--Vlasov SDE describing the evolution of the number of infected individuals takes the following form:
\begin{equation}\label{eq:simulated model}
\mathrm{d}I_{t} = \bigg(\beta\bigg(1 + \alpha\frac{\E\big[I_{t}\big]}{N}\bigg)I_{t}(N - I_{t}) - (\mu - \gamma)I_{t}\bigg)\,\mathrm{d}t + \sigma I_{t}(N - I_{t})\,\mathrm{d}B_{t},
\end{equation}
where $N > 0$, $\beta,\gamma,\mu,\sigma\geq 0$ and $\alpha \geq - 1$. Here, we assume that the dependence of the disease transmission rate on the average infection level is proportional to the transmission rate $\beta$ within any subpopulation in the absence of inter-population interactions. That is, the parameter $\beta_{1}$ in~\eqref{eq:standard SIS epidemic model with an expected value} is given by $\beta_{1} = \alpha\beta$. This model constitutes a special case of~\eqref{model:2}, and Proposition~\ref{pr:existence, uniqueness and a growth estimate} yields a unique strong solution $I$ with positive paths satisfying $I_{0} = i_{0}$ a.s.~for $i_{0}\in ]0,N[$.

With our simulations, we examine the impact of the mean-field interaction term on the epidemiological dynamics. More specifically, we provide numerical visualizations to analyse the effect of the \emph{interaction parameter} $\alpha$ and to support our theoretical findings on extinction and persistence established in Section~\ref{se:3}. Moreover, we numerically investigate the transition between these two regimes in terms of $\alpha$, illustrating the \emph{critical influence of the mean-field interaction on the long-term behaviour of the epidemic}.

Simulations are performed using Euler--Maruyama scheme~\eqref{eq:Euler--Maruyama scheme for the particle system} with an equidistant partition $\mathbb{T}_{n}$, step size $|\mathbb{T}_{n}| = 0.001$ and $M_{n} = 10000$ particles. For a direct comparison with the original model in~\cite{GraGreHuMaoPan11}, we choose parameter values considered there and analyse representative values of $\alpha$, including $\alpha=0$. As in \cite{GraGreHuMaoPan11}, we assume that the unit of time is one day and that the population size is measured in units of one million.

To isolate the effect of the mean-field interaction term, we couple our simulations by using identical Brownian increments for each particle across all values of $\alpha$. As a result, differences between paths or empirical means for different values of $\alpha$ are solely due to the mean-field interaction rather than stochastic variability, enhancing visual comparability.

\begin{Example}\label{ex:parameters extinction}
We consider model~\eqref{eq:simulated model} with the following parameters, agreeing with those in~\cite[Example 4.4]{GraGreHuMaoPan11} and ensuring exponential extinction of $I$ for $\alpha < 0.54$:
\begin{equation}\label{eq:parameters 1}
N=100,\quad \beta = 0.5,\quad\mu = 20, \quad \gamma = 25, \quad \sigma = 0.08.
\end{equation}
Indeed, inequality~\eqref{eq:extinction in the epidemic models 11} in Example~\ref{ex:extinction 2} reduces to $\beta(1 + \alpha^{+})N - \frac{1}{2}\sigma^{2}N^{2}  < \mu + \gamma$, which holds if and only if $50(1 + \alpha^{+}) - 32 < 45$.
\end{Example}

For parameters~\eqref{eq:parameters 1} and $i_{0}=50$, 
Panel~(a) in Figure~\ref{fig:extinction} shows four sample paths for representative values of $\alpha$. To illustrate the contribution of the mean-field term to the dynamics, panel~(b) displays the respective empirical means $\frac{1}{M_{n}}\sum_{\ell=1}^{M_{n}}I^{(n,\ell)}$ of the particle systems $(I^{(n,1)},\dots,I^{(n,M_{n})})$. In view of the initial motivation of~\eqref{eq:simulated model}, the empirical mean can be interpreted as the number of infected individuals averaged over all subpopulations in the underlying population system.

\begin{figure}[h]
\centering
\includegraphics[height = 0.79\linewidth]{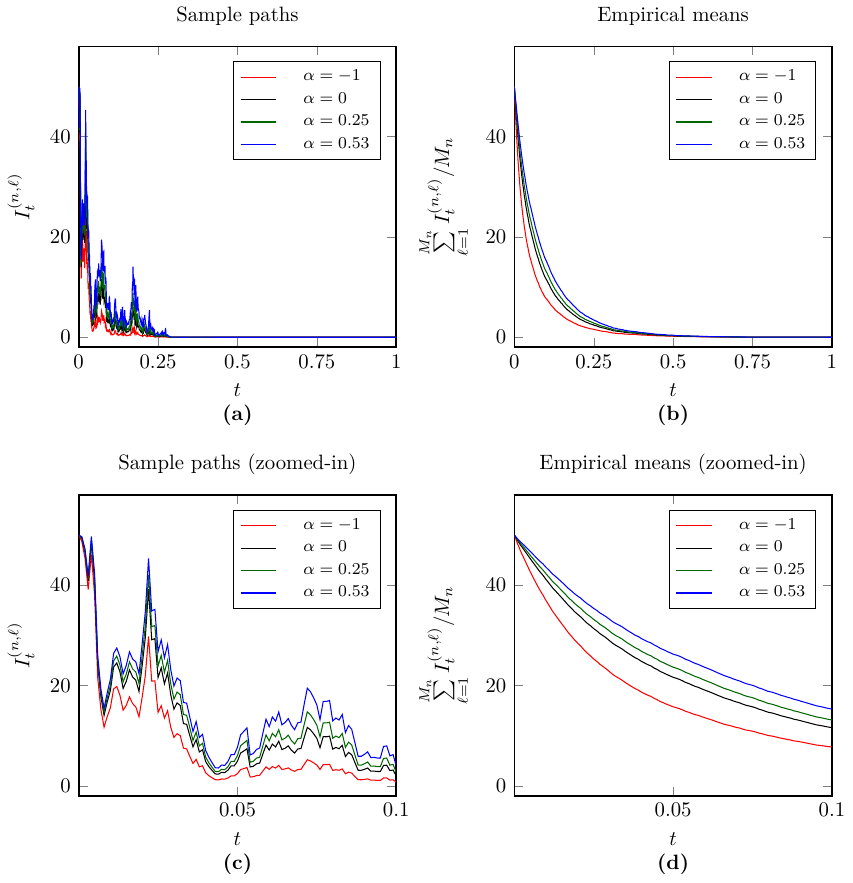}
\caption{\small{Simulation of model~\eqref{eq:simulated model} with parameters~\eqref{eq:parameters 1} and $i_{0}=50$, using Euler--Maruyama scheme~\eqref{eq:Euler--Maruyama scheme for the particle system} with $T = 1$, $|\mathbb{T}_{n}| = 0.001$ and $M_{n} = 10000$. Panels~(a) and~(b) display four sample paths, coupled across $\alpha$, alongside the respective empirical means, while panels~(c) and~(d) provide magnified views of these plots.}}
\label{fig:extinction}
\end{figure}

Confirming our theoretical results, Figure~\ref{fig:extinction} shows that the epidemic undergoes rapid extinction for parameters~\eqref{eq:parameters 1}. As the differences between the trajectories and the empirical means in panels~(a) and~(b) are barely visible, magnified views of the respective trajectories are provided in panels~(c) and~(d).

We observe that the paths and the corresponding empirical means are monotonically increasing in $\alpha$. This behaviour is in line with our epidemiological interpretation of the mean-field interaction, since increasing $\alpha$ enhances the disease transmission rate, resulting in a higher number of infections, while decreasing $\alpha$ has the opposite effect.

\begin{Example}\label{ex:parameters persistence}
Let the parameters of model~\eqref{eq:simulated model} be given as in~\cite[Example 5.3]{GraGreHuMaoPan11}. Namely,
\begin{equation}\label{eq:parameters 2}
N=100,\quad \beta = 0.5,\quad  \mu= 20,\quad \gamma = 25,\quad \sigma = 0.01. 
\end{equation}
Then Example~\ref{ex:persistence 2} ensures that $I$ persists above $x_{\alpha}\in ]0,100[$ for each $\alpha > -0.09$, where
\begin{equation*}
x_{\alpha} := N -\beta\frac{1 + \alpha^{+}}{\sigma^{2}} + \sqrt{\beta^{2}\frac{(1 +\alpha^{+})^{2}}{\sigma^{4}} - 2\frac{\mu + \gamma + \beta|\alpha|N}{\sigma^{2}}}.
\end{equation*}
In fact,~\eqref{eq:sufficient condition for persistence in model 2} for $y = -\beta\alpha^{-}$ becomes $\beta(1 - \alpha^{-})N - \frac{1}{2}\sigma^{2}N^{2} > \mu + \gamma$, which is equivalent to $50(1-\alpha^{-}) - 0.5 > 45$. In particular, by rounding to four decimal places, we obtain
\begin{equation}\label{eq:lower persistence levels}
x_{-0.08} \approx 1.0203,\quad x_{0} \approx 9.1751,\quad x_{0.5} \approx 6.0786,\quad x_{1} \approx 4.5444.
\end{equation}
\end{Example}

For parameters~\eqref{eq:parameters 2} and $i_{0} = 50$, panel~(a) in Figure~\ref{fig:persistence} shows four trajectories, coupled across $\alpha$. The respective values of the lower persistence level $x_{\alpha}$ are displayed as dashed horizontal lines. Panel~(b) depicts the corresponding empirical means, while panels~(c) and~(d) show, for better visibility, magnified views of the paths for $\alpha\in\{-0.08,0\}$.

\begin{figure}[h]
\centering
\includegraphics[height = 0.79\linewidth]{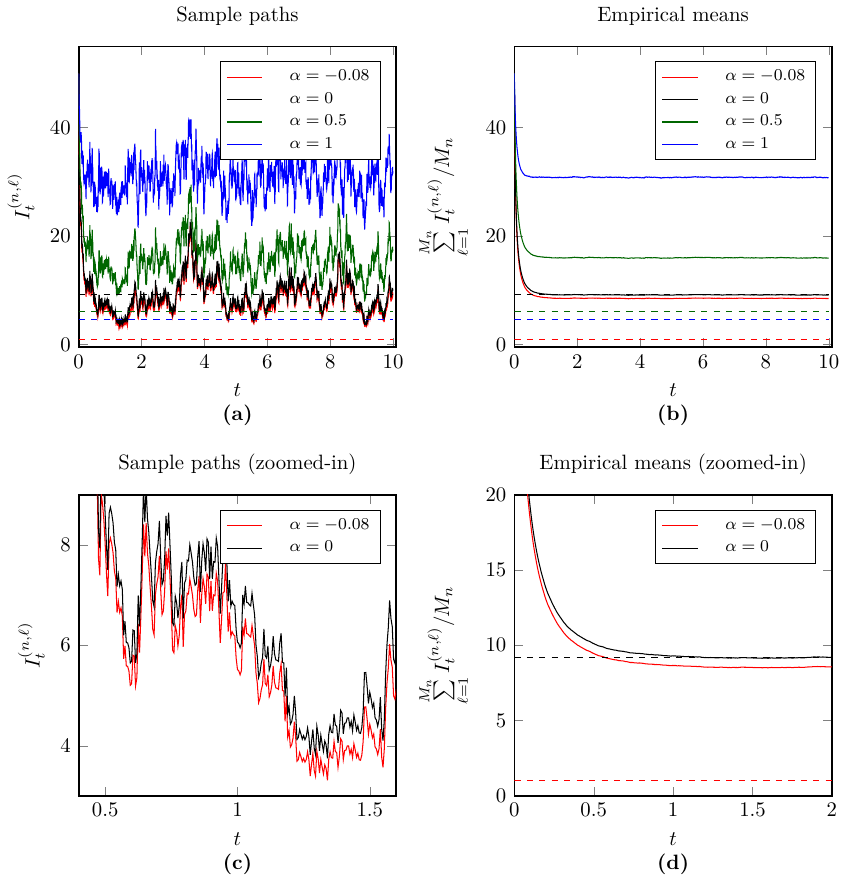}
\caption{Simulation of model~\eqref{eq:simulated model} with parameters~\eqref{eq:parameters 2} and $i_{0} = 50$, using scheme~\eqref{eq:Euler--Maruyama scheme for the particle system} with $T = 10$, $|\mathbb{T}_{n}| = 0.001$ and $M_{n} = 10000$. Panels~(a) and~(b) show four sample paths, coupled across $\alpha$, alongside the corresponding empirical means. Panels~(c) and~(d) provide magnified views of the red and black curves, and the dashed lines indicate the respective lower persistence levels~\eqref{eq:lower persistence levels}.}
\label{fig:persistence}
\end{figure}

Once again, we notice that the paths and empirical means are monotonically increasing in $\alpha$. However, Figure~\ref{fig:persistence} indicates that persistence levels~\eqref{eq:lower persistence levels} are certainly improvable for $\alpha > 0$. If more information about the limiting behaviour of $\alpha\E[I]$ were available, Examples~\ref{ex:persistence 1}--\ref{ex:persistence 3} would yield more precise persistence levels.

\begin{Example}\label{ex:parameter persistence 2}
For parameters~\eqref{eq:parameters 2} and $i_{0} = 50$, Figure~\ref{fig:persistence} indicates that the term $\alpha\E[I_{t}]$ converges to some $m^{\alpha}_{\infty}\in\R$ as $t\uparrow\infty$ for each $\alpha\in \{-0.08, 0.5, 1\}$. Specifically, we obtain the numerical estimates
\begin{equation}\label{eq:estimates empirical mean}
m^{-0.08}_{\infty} \approx -0.6840,\quad  m^{0.5}_{\infty} \approx 8.0154,\quad m^{1}_{\infty} \approx 30.8548,
\end{equation} 
based on the scaled empirical mean $\frac{\alpha}{M_{n}}\sum_{\ell=1}^{M_{n}}I_{T}^{(n,\ell)}$. As inequality~\eqref{eq:sufficient condition for persistence in model 2} for $y = \beta\frac{m_{\infty}^{\alpha}}{N}$ reduces to $50 + 0.5 m_{\infty}^{\alpha} - 0.5 > 45$, Example~\ref{ex:persistence 2} implies persistence around the level
\begin{equation*}
x_{\alpha} = 100 -5000(1+m^{\alpha}_{\infty}/100) + 10000\sqrt{0.25(1+m_{\infty}^{\alpha}/100)^{2} - 0.009}, 
\end{equation*}
provided that $m_{\infty}^{\alpha} > -9$. Under the assumption that the estimates~\eqref{eq:estimates empirical mean} are sufficiently precise, this yields the following estimated persistence levels:
\begin{equation}\label{eq:approximated persistence levels}
x_{-0.08} \approx 8.5379,\quad x_{0.5} \approx 16.0257,\quad x_{1} \approx 30.8561.
\end{equation}
Figure~\ref{fig:persistence-levels} illustrates the accuracy of these approximations and shows that the persistence level $x_{\alpha}$ aligns with the limit $\lim_{t\uparrow\infty}\E[I_{t}]$.
\end{Example}

\begin{figure}[h]
\centering
\includegraphics[height = 0.51\linewidth]{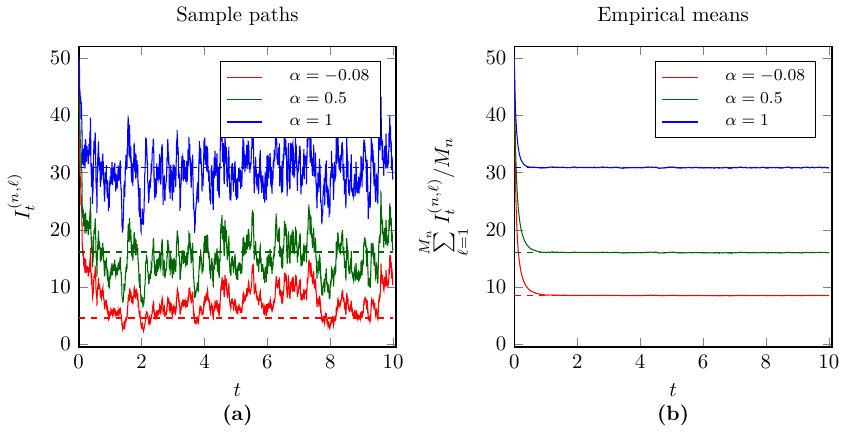}
\caption{Simulation of model~\eqref{eq:simulated model} with parameters~\eqref{eq:parameters 2} and $i_{0}=50$, using scheme~\eqref{eq:Euler--Maruyama scheme for the particle system} with $T = 10$, $|\mathbb{T}_{n}| = 0.001$ and $M_{n} = 10000$. Panels~(a) and~(b) display three sample paths, coupled across $\alpha$, alongside the respective empirical means. The dashed lines indicate the corresponding approximated persistence levels~\eqref{eq:approximated persistence levels}.}
\label{fig:persistence-levels}
\end{figure}

Finally, in the presence of mean-field interaction, that is, when $\alpha \neq 0$, our results on the pathwise asymptotic behaviour of the epidemic, derived in Section~\ref{se:3}, do not completely characterise the \emph{transition between extinction and persistence of the disease}. In contrast to the case $\alpha=0$, in which the stochastic reproduction number $R_{0}^{S} = \frac{\beta N - \frac{1}{2}\sigma^{2}N^{2}}{\mu + \gamma}$ provides insight into this transition, our simulations in Figure~\ref{fig:behaviour-transition} indicate that outside the parameter regime covered by our analysis the epidemic outcome may depend on the choice of $i_{0}$.

\begin{figure}[t]
\centering
\includegraphics[height = 0.51\linewidth]{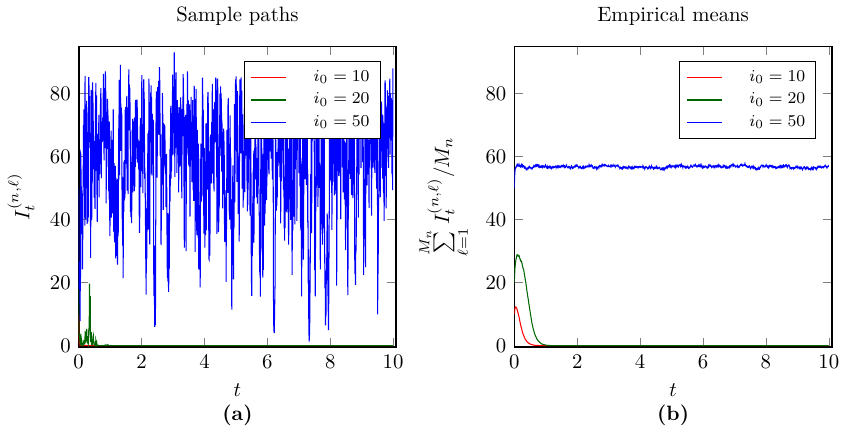}
\caption{Simulation of~\eqref{eq:simulated model} with parameters~\eqref{eq:parameters 1} and $\alpha =2.5$, using scheme~\eqref{eq:Euler--Maruyama scheme for the particle system} with $T = 10$, $|\mathbb{T}_{n}| = 0.001$ and $M_{n} = 10000$. Panels (a) and~(b) display three sample paths, coupled across $i_{0}$, alongside the respective empirical means.}
\label{fig:behaviour-transition}
\end{figure}

\section{Proofs of the main results}\label{se:5}

\subsection{Maxima and zeros of a sum of power functions}

In this self-contained section, we maximise functions $f\colon [0,y]\rightarrow\R$ of the specific form $f(x)$ $= a + bx + cx^{2} + d(y-x)^{\frac{3}{2}}$ for all $x\in [0,y]$ and consider their zeros, where $a,b,c,d\in\R$ and $y \geq 0$. The pathwise asymptotic analysis in Section~\ref{se:3} builds upon these findings.

\begin{Lemma}\label{le:maximisation 1}
For $a,b,c\in\R$ and $y\geq 0$ we have
\begin{equation*}
\max_{x\in [0,y]} a + bx + cx^{2} = 
\begin{cases}
a - \frac{(b^{+})^{2}}{4c} & \text{if $c < 0$ and $b < -2cy$}\\
a + by + cy^{2}& \text{if $c < 0$ and $b\geq -2 cy$}\\
a + (b + cy)^{+}y & \text{otherwise}
\end{cases}
.
\end{equation*}
\end{Lemma}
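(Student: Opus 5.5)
The plan is to write $q(x) := a + bx + cx^{2}$ on $[0,y]$, note that the case $y = 0$ is trivial (every formula gives $a$), and then split according to the sign of $c$, since this decides whether $q$ is concave or convex. In each case the maximum sits either at an endpoint of $[0,y]$ or at the vertex $x^{*} := -\frac{b}{2c}$ when $c\neq 0$.

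First I would treat the case $c \geq 0$, which is the ``otherwise'' branch. It also covers $c<0$ failing, so it must be handled regardless of $b$. Here $q$ is convex (affine if $c = 0$), so its maximum over the compact interval is attained at an endpoint. Hence
\begin{equation*}
\max_{x\in [0,y]} q(x) = \max\{q(0),q(y)\} = a + \max\{0,(b+cy)y\} = a + (b+cy)^{+}y,
\end{equation*}
where I use $y \geq 0$ to pull $y$ out of the positive part.

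Next I would treat $c < 0$. Then $q$ is strictly concave, with $q'(x) = b + 2cx$, and $q$ is increasing on $]-\infty,x^{*}]$ and decreasing on $[x^{*},\infty[$. If $b \geq -2cy$, then $x^{*} \geq y$, so $q$ is increasing on $[0,y]$ and the maximum is $q(y) = a + by + cy^{2}$; this is the second branch. If $b < -2cy$, then $x^{*} < y$, and two subcases arise. If $b \geq 0$, then $x^{*}\in[0,y[$ and the maximum equals $q(x^{*}) = a - \frac{b^{2}}{4c}$. If $b<0$, then $x^{*}<0$, so $q$ is decreasing on $[0,y]$ and the maximum is $q(0) = a$. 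Both subcases are captured by $a - \frac{(b^{+})^{2}}{4c}$, which is the first branch.

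No step is a real obstacle. The only point needing care is checking that the $b^{+}$ in the first branch correctly merges the two subcases (vertex to the left of $0$ versus inside the interval), and that the boundary case $b = -2cy$ is assigned consistently: there $x^{*} = y$ and $q(x^{*}) = q(y)$, so either formula gives the same value.
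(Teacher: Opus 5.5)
Your proof is correct and follows essentially the same route as the paper. The paper also reduces to $y>0$, completes the square so that for $c<0$ the maximum sits at the projection $x_{b,c}^{+}\wedge y$ of the vertex $x_{b,c}=-\frac{b}{2c}$ onto $[0,y]$, and for $c>0$ takes the better endpoint. Your endpoint comparison $\max\{q(0),q(y)\}=a+(b+cy)^{+}y$ for $c\geq 0$ is slightly cleaner: it covers $c=0$ directly, and it never has to say which endpoint is the maximiser.
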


\begin{proof}
It suffices to consider the case $y > 0$ and $c\neq 0$. Then $a + bx + cx^{2} = c(x - x_{b,c})^{2} + y_{b,c}$ for all $x\in\R$ with $x_{b,c}:= - \frac{b}{2c}$ and $y_{b,c}:= a - c x_{b,c}^{2}$ by completing the square. So, for $c < 0$ we have
\begin{equation*}
\max_{x\in [0,y]} a + bx + cx^{2} = c(x_{b,c}^{+}\wedge y - x_{b,c})^{2} + y_{b,c} = 
\begin{cases}
a - \frac{(b^{+})^{2}}{4c} & \text{if $b < - 2cy$}\\
a + by + cy^{2} & \text{otherwise}
\end{cases}
.
\end{equation*}
If instead $c > 0$, then  $\max_{x\in [0,y]} a + bx + cx^{2} = a + bx_{*} + cx_{*}^{2} = a + (b + cy)^{+}y$ for $x_{*} := y\mathbbm{1}_{[0,\frac{y}{2}]}(x_{b,c})$.
\end{proof}

\begin{Lemma}\label{le:maximisation 2}
For $a,b,d\in\R$ and $y\geq 0$ it holds that
\begin{equation*}
\max_{x\in[0,y]} a + bx + d(y-x)^{\frac{3}{2}} =
\begin{cases}
a + by - \frac{4}{27}\frac{b^{3}}{d^{2}}  & \text{if $d<0$, $b<0$ and $y > \frac{4}{9}\frac{b^{2}}{d^{2}}$}\\
a + \max\{b,dy^{\frac{1}{2}}\}y & \text{otherwise}
\end{cases}
.
\end{equation*}
\end{Lemma}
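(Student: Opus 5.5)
Set $g(x) := a + bx + d(y-x)^{\frac{3}{2}}$ on $[0,y]$. The plan is elementary calculus, in the spirit of the proof of Lemma~\ref{le:maximisation 1}. First, the trivial cases $y=0$ or $d=0$ are immediate: for $d=0$ the maximum is $a + b^{+}y = a+\max\{b,0\}y$, which agrees with the formula. Then I compute
\begin{equation*}
g'(x) = b - \tfrac{3}{2}d(y-x)^{\frac{1}{2}},\qquad g''(x) = \tfrac{3}{4}d(y-x)^{-\frac{1}{2}}
\end{equation*}
for $x\in[0,y[$.

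\textbf{Case $d>0$.} Here $g$ is convex, so the maximum is attained at an endpoint:
\begin{equation*}
\max\{g(0),g(y)\} = a + \max\{dy^{\frac{3}{2}}, by\} = a + \max\{b,dy^{\frac{1}{2}}\}y.
\end{equation*}

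\textbf{Case $d<0$.} Here $g$ is concave, and $g'(x) = b + \tfrac{3}{2}|d|(y-x)^{\frac{1}{2}}$.
\begin{itemize}
\item If $b\geq 0$, then $g'\geq 0$, so $g$ is increasing and the maximum is $g(y)=a+by$. This equals $a+\max\{b,dy^{\frac{1}{2}}\}y$ because $dy^{\frac12}\le 0\le b$.
\item If $b<0$, the unique critical point solves $(y-x)^{\frac{1}{2}} = \frac{2}{3}\frac{-b}{|d|}$, i.e. $x_{*} = y - \frac{4}{9}\frac{b^{2}}{d^{2}}$.
\begin{itemize}
\item If $x_{*}>0$, i.e. $y>\frac{4}{9}\frac{b^{2}}{d^{2}}$, then by concavity the maximum is $g(x_{*})$. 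Substituting gives
\begin{equation*}
g(x_{*}) = a + by - b\cdot\tfrac{4}{9}\tfrac{b^{2}}{d^{2}} + d\cdot\tfrac{8}{27}\tfrac{|b|^{3}}{|d|^{3}}.
\end{equation*}
Since $d<0$ and $b<0$, we have $d|b|^{3}/|d|^{3} = b^{3}/d^{2}$, so $g(x_{*}) = a + by - \tfrac{4}{9}\tfrac{b^{3}}{d^{2}} + \tfrac{8}{27}\tfrac{b^{3}}{d^{2}} = a+by-\tfrac{4}{27}\tfrac{b^{3}}{d^{2}}$, as claimed.
\item Otherwise $x_{*}\le 0$, so $g'\le 0$ on $[0,y]$ and the maximum is $g(0)=a+dy^{\frac{3}{2}}$. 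To match the formula I must check $dy^{\frac12}\ge b$, i.e. $|d|y^{\frac12}\le |b|$. This holds because $y\le \frac49 b^2/d^2$ gives $|d|y^{\frac12}\le \frac23|b|\le |b|$.
\end{itemize}
\end{itemize}

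The only delicate step is this sign bookkeeping in the substitution and the consistency check in the boundary subcase. Everything else is routine.
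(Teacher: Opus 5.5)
Your proposal is correct and follows essentially the same route as the paper: it splits on the signs of $d$ and $b$, takes an endpoint maximum for $d>0$, uses monotonicity for $d<0\le b$, and uses the critical point $x_{*}=y-\frac{4}{9}\frac{b^{2}}{d^{2}}$ when $b,d<0$. Unlike the paper, you also carry out the evaluation of $g(x_{*})$ and check that $dy^{\frac{1}{2}}\geq b$ in the subcase $x_{*}\leq 0$, both of which the paper leaves implicit.
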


\begin{proof}
Once again, we may assume that $y > 0$ and $d\neq 0$, by Lemma~\ref{le:maximisation 1}. The derivative of the continuously differentiable function $f\colon [0,y]\rightarrow\R$ given by $f(x) :=  a + bx$ $+\, d(y-x)^{\frac{3}{2}}$ is of the form $f'(x) = b - \frac{3}{2}d(y-x)^{1/2}$ for all $x\in [0,y]$.

Thus, for $d > 0$ the function $f$ is strictly convex and attains its maximum over $[0,y]$ on the boundary. If $d<0$ and $b\geq 0$, then $f$ is strictly increasing, ensuring that $\max_{x\in[0,y]}f(x) = f(y)$.

In the case that $b,d < 0$ we have $f'(x) \geq 0$ if and only if $x \leq x_{*}$ for any $x\in [0,y]$, where $x_{*} := y - \frac{4}{9}(\frac{b}{d})^{2}$. If in addition $x_{*} \geq 0$, then $x_{*}$ is the only critical point of $f$ and $\max_{x\in[0,y]}f(x) = f(x_{*})$. Otherwise, $\max_{x\in [0,y]}f(x) = f(0)$.
\end{proof}

\begin{Proposition}\label{pr:maximisation 3}
Let $a,b,d\in\R$, $c < 0$, $y\geq 0$ and $e := \frac{9}{4}d^{2} + (b + 2cy)8c$. If both $b + 2cy\geq 0$ and $d > 0$ or we have $b + 2cy < 0$, then
\begin{equation*}
\max_{x\in [0,y]} a + bx + cx^{2} + d(y - x)^{\frac{3}{2}} = a + by + cy^{2}  + x_{*}^{3}\bigg(cx_{*} + \frac{1}{2}d\bigg)^{-}
\end{equation*}
as soon as $e > 0$ and $x_{*} := -\frac{1}{4c}(\frac{3}{2}d + \sqrt{e})$ satisfies $x_{*} < \sqrt{y}$. Otherwise,
\begin{equation*}
\max_{x\in [0,y]} a + bx + cx^{2} + d(y - x)^{\frac{3}{2}} = a + \max\{b + cy, dy^{\frac{1}{2}}\}y.
\end{equation*}
\end{Proposition}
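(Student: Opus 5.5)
The plan is to reduce the problem to a one-variable polynomial by the substitution $z := (y-x)^{\frac{1}{2}}$, which maps $[0,y]$ bijectively onto $[0,\sqrt{y}]$, with $x = y - z^{2}$. Expanding gives
\begin{equation*}
a + bx + cx^{2} + d(y-x)^{\frac{3}{2}} = a + by + cy^{2} + g(z),\quad g(z) := -(b+2cy)z^{2} + dz^{3} + cz^{4}.
\end{equation*}
So it suffices to maximise the quartic $g$ over $[0,\sqrt{y}]$. Its two endpoint values are $g(0)=0$, which corresponds to $x=y$, and $g(\sqrt{y}) = (d\sqrt{y} - b - cy)y$, which corresponds to $x=0$. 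Together these give exactly the "otherwise" value $a + \max\{b+cy, dy^{\frac{1}{2}}\}y$.

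Next I compute $g'(z) = z\,q(z)$ with $q(z) := 4cz^{2} + 3dz - 2(b+2cy)$. Since $c<0$, the function $q$ is concave with discriminant $9d^{2} + 32c(b+2cy) = 4e$. If $e \le 0$, then $q \le 0$, so $g$ is decreasing and the maximum is at $z=0$; this is covered by the second formula. If $e>0$, the roots of $q$ are $-\frac{1}{4c}(\frac{3}{2}d \mp \sqrt{e})$, and the larger one is exactly $x_{*}$. I then split according to the hypotheses. If $b+2cy<0$, the constant term of $q$ is positive, so the roots have opposite signs. Hence $g$ increases on $[0,x_{*}]$ and decreases afterwards. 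If instead $b+2cy\ge 0$ and $d>0$, both roots are non-negative (their product is $\ge 0$ and their sum $-\frac{3d}{4c}$ is positive). Hence $g$ decreases, then increases up to $x_{*}$, then decreases. In the remaining configuration, $b+2cy\ge0$ and $d\le 0$, both roots are $\le 0$, so $g$ is decreasing on $[0,\infty[$ and only the endpoint formula can occur.

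It follows that whenever $x_{*} \ge \sqrt{y}$ (or $e \le 0$, or the excluded sign configuration holds), $g$ is piecewise monotone on $[0,\sqrt{y}]$ with no interior maximum. The maximum is then $\max\{g(0), g(\sqrt{y})\}$, which is the second formula. When $e>0$ and $x_{*}<\sqrt{y}$ under either hypothesis, the maximum is $\max\{g(0), g(x_{*})\} = \max\{0, g(x_{*})\}$. In the first case $g(x_{*}) > g(0) = 0$ automatically, so taking the maximum with $0$ is harmless there too. To evaluate $g(x_{*})$ I use $q(x_{*})=0$, i.e. $-(b+2cy) = -\frac{1}{2}(4cx_{*}^{2} + 3dx_{*})$. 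Substituting into $g(x_{*}) = x_{*}^{2}(-(b+2cy) + dx_{*} + cx_{*}^{2})$ yields $g(x_{*}) = -x_{*}^{3}(cx_{*} + \frac{1}{2}d)$. Since $x_{*} \ge 0$, we get $\max\{0,g(x_{*})\} = x_{*}^{3}(cx_{*}+\frac{1}{2}d)^{-}$, which is the first formula.

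The computations are elementary. The main obstacle is the case bookkeeping: checking the sign pattern of $q$ on $[0,\sqrt{y}]$ in each configuration, so that no interior critical point other than $x_{*}$ can be a maximiser, and confirming that the smaller root never yields a maximum because it is a local minimum. I would handle this by the root-sign (Vieta) arguments above. As a consistency check, when $d=0$ both formulas should agree with Lemma~\ref{le:maximisation 1}.
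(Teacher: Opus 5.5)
Your proof is correct and follows essentially the same route as the paper: you substitute $z=(y-x)^{1/2}$, analyse the sign of the quadratic factor of the derivative (your $q=-2p$), whose larger root is $x_{*}$, and evaluate the maximum at $x_{*}$ using the root equation. Your Vieta-based case analysis also spells out the ``otherwise'' case, including the configuration $b+2cy\geq 0$, $d\leq 0$, which the paper only sketches.
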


\begin{proof}
It suffices to consider the case that $y > 0$. Then the derivative of the continuously differentiable function $f\colon [0,y]\rightarrow\R$ defined by $f(x) := a + bx + cx^{2} + d(y-x)^{\frac{3}{2}}$ satisfies $f'(x) = b + 2cy - \frac{3}{2}dz - 2cz^{2}$ for all $x\in [0,y]$ and $z\in [0,y^{\frac{1}{2}}]$ with $z = (y-x)^{\frac{1}{2}}$.

Further, the polynomial function $p\colon\mathbb{C}\rightarrow\mathbb{C}$ given by $p(z) := b + 2cy - \frac{3}{2}dz - 2cz^{2}$ admits the two complex zeros
\begin{equation*}
z_{-} := -\frac{1}{4c}\bigg(\frac{3}{2}d - \sqrt{e}\bigg)\quad\text{and}\quad z_{+} := -\frac{1}{4c}\bigg(\frac{3}{2}d + \sqrt{e}\bigg),
\end{equation*}
which are real if and only if $e\geq 0$. In this case, $z_{-}\leq z_{+}$, and from $c < 0$ we infer that $p(x)\geq 0$ for $x\leq z_{-}$, $p(x)\leq 0$ for $x\in [z_{-},z_{+}]$ and $p(x)\geq 0$ for $x\geq z_{+}$. Further, we may set $x_{+} := y - z_{+}^{2}$.

So, let $e > 0$ and $x_{*} < \sqrt{y}$. If $b + 2cy\geq 0$ and $d > 0$, then $x_{-} := y - z_{-}^{2}$ lies in $]x_{+},y[$, which entails that $f'\geq 0$ on $[0,x_{+}]$, $f'\leq 0$ on $[x_{+},x_{-}]$ and $f'\geq 0$ on $[x_{-},y]$. Hence, as $p(x_{*}) = 0$, we obtain that
\begin{equation*}
\max_{x\in[0,y]} f(x) = \max\{f(x_{+}),f(y)\} = a + by + cy^{2}  + x_{*}^{3}\bigg(cx_{*} + \frac{1}{2}d\bigg)^{-}
\end{equation*}
in this case. If instead $b + 2cy < 0$, then $f'\geq 0$ on $[0,x_{+}]$ and $f'\leq 0$ on $[x_{+},y]$, from which we readily deduce that $\max_{x\in[0,y]} f(x) = f(x_{+})$.

If neither of these two scenarios holds, then a case distinction based on the polynomial function $p$ shows that $\max_{x\in [0,y]} f(x) = \max\{f(0),f(y)\} = a + \max\{b + cy, dy^{\frac{1}{2}}\}y$.
\end{proof}

\begin{Lemma}\label{le:unique zero of a sum of power functions}
Let $a,b,c,d\in\R$ and $y>0$ satisfy $a + dy^{\frac{3}{2}} > 0$. Then $f\colon [0,y]\rightarrow\R$ given by $f(x) := a + bx + cx^{2}+d(y-x)^{\frac{3}{2}}$ admits a unique zero in the following two cases:
\begin{enumerate}[(i)]
\item $d = 0$ and either $c\leq 0$ and $f(y)\leq 0$ or $c > 0$ and $f(y) < 0$.

\item $c\leq 0$, $d\neq 0$ and $f(y) < 0$.
\end{enumerate}
Moreover, in the first case, the zero $x_{0}$ of $f$ is of the form $x_{0} = \frac{-b-\sqrt{b^{2} - 4ac}}{2c}$, if $c\neq 0$, and $x_{0} = - \frac{a}{b}$, otherwise.
\end{Lemma}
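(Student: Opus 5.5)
The plan is to combine the intermediate value theorem with a shape analysis of $f$. Since $f$ is continuous on $[0,y]$ and $f(0) = a + dy^{\frac{3}{2}} > 0$, existence of a zero follows once $f(y) \leq 0$ in case~(i) or $f(y) < 0$ in case~(ii). In case~(i) with $f(y) = 0$, the point $y$ itself is a zero. The work is therefore in proving uniqueness, and for this I would show that $f$ cannot return to nonnegative values after its first zero.

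\emph{Case (i), $d = 0$.} Here $f$ is a polynomial of degree at most two with $f(0) = a > 0$.
\begin{itemize}
\item If $c = 0$, then $f$ is affine. From $f(0) > 0 \geq f(y)$ we get $b < 0$, so $f$ is strictly decreasing with unique zero $x_{0} = -\frac{a}{b}$.
\item If $c < 0$, then $f$ is concave. A concave function with $f(0) > 0$ has at most one zero in $]0,\infty[$, because the product of its roots, $a/c$, is negative. So the positive root $\frac{-b-\sqrt{b^{2}-4ac}}{2c}$ is the unique zero in $[0,y]$. Note that $-\sqrt{\cdot}$ divided by $2c<0$ gives the positive root.
\item If $c > 0$, then $f(0) > 0 > f(y)$ and $f$ is convex. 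Convexity means $f$ lies below its chord on $[0,y]$, and it can cross zero at most twice on $\R$. Because $f(y) < 0$ and $f \to +\infty$ as $x \to \infty$, the second root lies beyond $y$. The zero in $[0,y]$ is therefore the smaller root, again $\frac{-b-\sqrt{b^{2}-4ac}}{2c}$.
\end{itemize}
In the last two subcases I will check the sign conventions carefully, so that the formula picks the correct root in both.

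\emph{Case (ii), $c \leq 0$ and $d \neq 0$.} I will use the substitution $z = (y-x)^{\frac{1}{2}}$ from the proof of Proposition~\ref{pr:maximisation 3}, under which $f'(x) = p(z) := b + 2cy - \frac{3}{2}dz - 2cz^{2}$. The strategy is to show that the sign pattern of $f'$ on $[0,y]$ is one of: monotone; decreasing then increasing; or increasing then decreasing.
\begin{itemize}
\item If $c = 0$, then $p$ is affine in $z$ and changes sign at most once. Since $z$ is decreasing in $x$, $f'$ changes sign at most once on $[0,y]$, so $f$ is monotone or unimodal.
\item If $c < 0$, then $p$ is a convex quadratic in $z$, negative exactly between its real roots $z_{-} \leq z_{+}$.
\end{itemize}

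I would then exclude two zeros by contradiction. Suppose $0 < x_{1} < x_{2} \leq y$ are zeros. Since $f(0) > 0$ and $f(y) < 0$ (so in fact $x_{2} < y$), $f$ would need at least three sign changes of the pattern $+,0,\ldots,0,-$ with a return above zero. That forces $f'$ to change sign from negative to positive and back to negative on $[0,y]$.

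The main obstacle is ruling out this pattern when $c<0$. In the $z$-variable, traversed in decreasing $z$, the pattern corresponds to $p$ taking signs $-,+,-$. That is impossible, since a convex quadratic is negative only on a single interval. The same argument covers $c = 0$. Tangential zeros, where $f$ touches zero without crossing, need a slight refinement: a tangential zero at an interior point still requires $f'$ to go from negative to positive, which leads to the same contradiction.
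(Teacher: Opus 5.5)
Your proof is correct. In case~(i) it is the same root analysis as in the paper. In case~(ii) you take a different route.

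The paper passes to the quartic $g(z) := f(y-z^{2})$, whose derivative is $g'(z) = -2z\,p(z)$. It then distinguishes whether $g$ has zero, one or (only if $c<0$) two local extrema in $]0,\sqrt{y}[$. In each configuration it checks that $g(0) = f(y) < 0 < f(0) = g(\sqrt{y})$ allows exactly one crossing.

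You argue by contradiction instead. Two zeros $x_{1}<x_{2}$, together with $f(0)>0>f(y)$, produce by the mean value theorem points $s_{1}<s_{2}<s_{3}$ with $f'(s_{1})<0<f'(s_{2})$ and $f'(s_{3})<0$. On the other hand, $\{x\in[0,y]\mid f'(x)<0\}$ is an interval. It is the preimage of the sublevel set $\{z\geq 0\mid p(z)<0\}$ of the convex (for $c\leq 0$) function $p$ under the monotone map $x\mapsto\sqrt{y-x}$. This removes the case enumeration and makes clear that $c\leq 0$ enters only through the convexity of $p$. The paper's version, in the spirit of Proposition~\ref{pr:maximisation 3}, instead gives the full monotonicity profile of $f$.

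Two points need tidying.

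First, your announced strategy, that $f$ is monotone or unimodal, is false for $c<0$. Take $y=4$, $a=-\frac{41}{2}$, $b=9$, $c=-1$ and $d=\frac{8}{3}$. All hypotheses of~(ii) hold, since $a+dy^{\frac{3}{2}}=\frac{5}{6}>0$ and $f(4)=-\frac{1}{2}$. But $f'(x)=9-2x-4\sqrt{4-x}$ takes the values $1,-1,1$ at $x=0,3,4$, so $f$ increases, decreases and increases again. Your contradiction argument never uses that claim, so simply drop it.

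Second, to obtain the three mean-value points you need some $m\in\,]x_{1},x_{2}[$ with $f(m)\neq 0$. Such $m$ exists because $d\neq 0$ makes $p$ a nonzero polynomial, so $f'$ cannot vanish on an interval and $f$ cannot vanish on $[x_{1},x_{2}]$. Then apply the mean value theorem as follows:
\begin{itemize}
\item if $f(m)>0$, on $[0,x_{1}]$, $[x_{1},m]$ and $[m,x_{2}]$;
\item if $f(m)<0$, on $[0,x_{1}]$, $[m,x_{2}]$ and $[x_{2},y]$.
\end{itemize}
This replaces your informal discussion of sign changes and tangential zeros.
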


\begin{proof}
(i) It suffices to consider the case $c\neq 0$, in which $f$ has at least one and at most two zeros. If $c<0$, then $\frac{-b+\sqrt{b^{2}-4ac}}{2c} < 0$, which verifies the assertions. If $c > 0$, then from $f(y) < 0$ and $\lim_{x\uparrow\infty} a + bx + cx^{2} = \infty$ we infer that $f$ admits the unique asserted zero.

(ii) The function $g\colon\R\rightarrow\R$ defined by $g(z) := a + by + cy^{2} - (b+2cy)z^{2} + dz^{3} + cz^{4}$ satisfies $g(z) = f(y - z^{2})$ for all $z\in [0,\sqrt{y}]$. Hence, the assertion follows if we can show that $g$ has exactly one zero in $]0,\sqrt{y}[$.

Since $g'(0) = 0$, we see that $g$ has at most two local extrema in $\R\setminus\{0\}$. If $g$ has no local extrema in $]0,\sqrt{y}[$, then from $g(0) = f(y) < 0$ and $g(\sqrt{y}) = f(0) > 0$ it follows that $g$ is increasing on $]0,\sqrt{y}[$ and has exactly one zero in this interval.

If $g$ has exactly one local extremum in $]0,\sqrt{y}[$, then there is $z_{0}\in ]0,\sqrt{y}[$ such that either $g$ is increasing on $]0,z_{0}[$ and decreasing on $]z_{0},\sqrt{y}[$ or $g$ is decreasing on $]0,z_{0}[$ and increasing on $]z_{0},\sqrt{y}[$. Since $g(0) < 0$ and $g(\sqrt{y}) > 0$, both cases imply that $g$ has a unique zero in $]0,\sqrt{y}[$.

Lastly, $g$ can have two local extrema in $]0,\sqrt{y}[$ only if $c<0$. In this case, $\lim_{z\downarrow -\infty} g(z)$ $= \lim_{z\uparrow \infty} g(z) = -\infty$. Hence, there are $z_{1},z_{2}\in]0,\sqrt{y}[$ with $z_{1} < z_{2}$, such that $g$ is decreasing on $]0,z_{1}[$ and $]z_{2},\sqrt{y}[$ and increasing on $]z_{1},z_{2}[$. As $g(0) < 0$ and $g(y) > 0$, we again conclude that $g$ has exactly one zero in $]0,\sqrt{y}[$.
\end{proof}

\begin{Remark}\label{re:unique zero of a sum of power functions}
If $c > 0$, $d = 0$ and $f(y) = 0$, then $f$ may have two zeros. In this case, either $\frac{-b-\sqrt{b^{2} - 4ac}}{2c}$ equals $y$ and is the only zero or it is the smallest zero of $f$.
\end{Remark}

\subsection{Proofs for the derivation of a unique strong solution}

\begin{proof}[Proof of Proposition~\ref{pr:pathwise uniqueness}]
As any weak solution to~\eqref{eq:SIS epidemic model} also solves~\eqref{eq:extended McKean--Vlasov SDE} weakly, it suffices to show that pathwise uniqueness holds for the latter equation relative to the functional $\Theta:\R_{+}\times\mathcal{P}_{1}(\R)\times\mathcal{P}_{1}(\R)\rightarrow\R_{+}$ defined by $\Theta(s,\nu,\tilde{\nu}) := \hat{\lambda}_{k+1}(s,N(s))\mathcal{W}_{1}(\nu,\tilde{\nu})$.

According to~\cite[Section~2.2]{KalMeyPro24-2}, this holds if any two weak solutions $I$ and $\tilde{I}$ to~\eqref{eq:extended McKean--Vlasov SDE} on a common filtered probability space relative to one standard $d$-dimensional Brownian motion are indistinguishable whenever $I_{0} = \tilde{I}_{0}$ a.s.~and the measurable function $I\rightarrow\R_{+}$, $s\mapsto\Theta(s,\mathcal{L}(I_{s}),\mathcal{L}(\tilde{I}_{s}))$ is locally integrable.

To this end, we observe that the drift $\overline{b}\colon\R_{+}\times\R\times\mathcal{P}_{1}(\R)\rightarrow\R$ and diffusion coefficient $\overline{\sigma}\colon\R_{+}\times\R\rightarrow\R^{1\times d}$ of the McKean--Vlasov SDE~\eqref{eq:extended McKean--Vlasov SDE} are given by
\begin{equation}\label{eq:extended coefficients}
\overline{b}(s,x,\nu) := \sum_{i=0}^{k}\overline{b}_{i}(s,N(s),\nu)\big(x^{+}\wedge N(s)\big)^{i}\quad\text{and}\quad\overline{\sigma}(s,x) := \overline{f}(s,x,N(s) - x).
\end{equation}
The partial Lipschitz condition~\eqref{eq:partial Lipschitz condition on the drift coefficient} for the drift coefficient of~\eqref{eq:SIS epidemic model}, which is ensured by~\eqref{co:1}, carries over to $\overline{b}$ when $\vartheta$ is replaced by $\mathcal{W}_{1}$. Namely, we have
\begin{equation}\label{eq:partial Lipschitz condition on the extended drift coefficient}
\sgn(x-\tilde{x})\big(\overline{b}(\cdot,x,\nu) - \overline{b}(\cdot,\tilde{x},\tilde{\nu})\big) \leq \hat{b}_{k+1}(\cdot,N)|x-\tilde{x}| + \hat{\lambda}_{k+1}(\cdot,N)\mathcal{W}_{1}(\nu,\tilde{\nu})
\end{equation}
for all $x,\tilde{x}\in\R$ and $\nu,\tilde{\nu}\in\mathcal{P}_{1}(\R)$. Indeed, since the function $b_{k+1}$ in~\eqref{eq:extended functions} satisfies $|b_{k+1}(x,y) - b_{k+1}(\tilde{x},y)| \leq |x-\tilde{x}|$ for any $x,\tilde{x}\in\R$ and $y\geq 0$, we infer from~\eqref{eq:domination condition} that
\begin{equation*}
\vartheta\big(\nu\circ b_{k+1}(\cdot,N)^{-1},\tilde{\nu}\circ b_{k+1}(\cdot,N)^{-1}\big)\leq \mathcal{W}_{1}(\nu,\tilde{\nu}),
\end{equation*}
which entails~\eqref{eq:partial Lipschitz condition on the extended drift coefficient}. The $\frac{1}{2}$-Hölder continuity condition~\eqref{co:2} on compact sets extends to $|\overline{f}(\cdot,x,y) - \overline{f}(\cdot,\tilde{x},\tilde{y})| \leq \lambda_{n}(|x-\tilde{x}|^{1/2} + |y-\tilde{y}|^{1/2})$ for all $n\in\N$ and $x,\tilde{x},y,\tilde{y}\in [-n,n]$. Consequently,
\begin{equation}\label{eq:Hoelder condition on compact sets on the extended diffusion coefficient}
|\overline{\sigma}(s,x) - \overline{\sigma}(s,\tilde{x})| \leq  2\lambda_{n + \lceil N(s)\rceil}(s)|x - \tilde{x}|^{\frac{1}{2}}
\end{equation}
for any $n\in\N$, $s\geq 0$ and $x,\tilde{x}\in [-n,n]$. Thus, pathwise uniqueness for~\eqref{eq:extended McKean--Vlasov SDE} relative to $\Theta$ follows from~\cite[Corollary~3.1]{KalMeyPro24-2}, as the required conditions~(C.1) and~(C.3) therein hold.

Regarding the remaining assertions, the difference $Y :=I - \tilde{I}$ is a random It{\^o} process in the sense of~\cite[Section~4.1]{KalMeyPro24-2}. That is, it is a semimartingale of the form
\begin{equation*}
Y = Y_{0} + \int_{0}^{\cdot}\mathrm{B}_{s}\,\mathrm{d}s + \int_{0}^{\cdot}\Sigma_{s}\,\mathrm{d}W_{s}\quad\text{a.s.,}
\end{equation*}
where the two $\mathbb{F}$-progressively measurable processes $\mathrm{B}$ and $\Sigma$ with respective values in $\R$ and $\R^{1\times d}$ are given by
\begin{equation}\label{eq:random drift and diffusion coefficients}
\begin{split}
\mathrm{B}_{s} &:= \sum_{i=0}^{k} b_{i}\big(s,N(s),\mathcal{L}(I_{s})\big)I_{s}^{i} - b_{i}\big(s,N(s),\mathcal{L}(\tilde{I}_{s})\big)\tilde{I}_{s}^{i},\\
\Sigma_{s} &:=  f(s,I_{s},N(s) - I_{s}) - f(s,\tilde{I}_{s},N(s) - \tilde{I}_{s}).
\end{split}
\end{equation}
Hence, the partial Lipschitz condition~\eqref{eq:partial Lipschitz condition on the drift coefficient} for the drift coefficient of~\eqref{eq:SIS epidemic model} entails that $\sgn(Y_{s})\mathrm{B}_{s}  \leq \hat{b}_{k+1}(s,N(s))|Y_{s}| +  \hat{\lambda}_{k+1}(s,N(s))\vartheta(\mathcal{L}(I_{s}),\mathcal{L}(\tilde{I}_{s}))$ for all $s\geq 0$. For this reason, the remaining claims are implied by~\cite[Theorem~4.2]{KalMeyPro24-2}.
\end{proof}

In the spirit of~\cite[Lemma~3.5]{BriGraKal24}, we provide sufficient verifiable estimates ensuring~\eqref{co:4}.

\begin{Lemma}\label{le:verifiable estimates}
If the following two conditions hold for some $m\in\N$ and $\zeta,\eta\in [\frac{1}{2},\infty[^{d\times m}$, then~\eqref{co:4} is satisfied:
\begin{enumerate}[(i)]
\item There is a measurable and locally square-integrable map $g\colon\R_{+}\rightarrow\R^{d\times m}$ such that $|f_{i}(\cdot,x,y)| \leq |\sum_{j=1}^{m} g_{i,j} x^{\zeta_{i,j}}y^{\eta_{i,j}}|$ for all $i\in\{1,\dots,d\}$ and $x,y\geq 0$.

\item Each $\nu\in\mathcal{P}_{0}(\R)$ satisfies
\begin{align*}
\max_{x\in [0,N]} \frac{1}{2}\sum_{i=1}^{d}\sum_{\substack{j_{1},j_{2}=1,\\ \zeta_{i,j_{1}} + \zeta_{i,j_{2}} < 2}}^{m}g_{i,j_{1}}g_{i,j_{2}}x^{\zeta_{i,j_{1}} + \zeta_{i,j_{2}} - 1}(N-x)^{\eta_{i,j_{1}} + \eta_{i,j_{2}}} &\leq b_{0}(\cdot,N,\nu),\\
\max_{x\in [0,N]}\frac{1}{2}\sum_{i=1}^{d}\sum_{\substack{j_{1},j_{2}=1,\\ \eta_{i,j_{1}} + \eta_{i,j_{2}} < 2}}^{m} g_{i,j_{1}}g_{i,j_{2}} (N-x)^{\zeta_{i,j_{1}} + \zeta_{i,j_{2}}}x^{\eta_{i,j_{1}} + \eta_{i,j_{2}} - 1} &\leq \dot{N} - \sum_{i=0}^{k}b_{i}(\cdot,N,\nu)N^{i}.
\end{align*}
\end{enumerate}
\end{Lemma}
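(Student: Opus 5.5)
Since~\eqref{co:4} only concerns the behaviour of $f$ near $x=0$ and near $x=N$, the plan is to expand $|f(\cdot,x,N-x)|^{2}$ via the domination in~(i) and split the resulting double sum into two parts: the terms that are singular after division by $x$, and the terms that are of order $x^{2}$. We fix $\varepsilon := 1$ and the increasing function $\varphi(x) := x^{\kappa}$ on $]0,1]$ for a suitable small exponent $\kappa\geq 0$, or simply $\varphi := 1$. The remaining task is then to exhibit a locally integrable $c$.

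Concretely, by~(i) we have
\begin{equation*}
|f(\cdot,x,N-x)|^{2} \leq \sum_{i=1}^{d}\sum_{j_{1},j_{2}=1}^{m} g_{i,j_{1}}g_{i,j_{2}}x^{\zeta_{i,j_{1}}+\zeta_{i,j_{2}}}(N-x)^{\eta_{i,j_{1}}+\eta_{i,j_{2}}}.
\end{equation*}
We divide by $2x$ and separate the index pairs with $\zeta_{i,j_{1}}+\zeta_{i,j_{2}}<2$ from those with $\zeta_{i,j_{1}}+\zeta_{i,j_{2}}\geq 2$. The first group is bounded by its maximum over $x\in[0,N]$, and hence by $b_{0}(\cdot,N,\nu)$ according to the first inequality in~(ii). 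Each term of the second group is of the form $g_{i,j_{1}}g_{i,j_{2}}x^{\zeta_{i,j_{1}}+\zeta_{i,j_{2}}-1}(N-x)^{\eta_{i,j_{1}}+\eta_{i,j_{2}}}$. Since $\zeta_{i,j_{1}}+\zeta_{i,j_{2}}-1\geq 1$ and $x\leq \min\{1,N\}$, it is bounded in absolute value by
\begin{equation*}
\tfrac{1}{2}|g_{i,j_{1}}g_{i,j_{2}}|\,x\,(1\vee N)^{\eta_{i,j_{1}}+\eta_{i,j_{2}}}.
\end{equation*}
Summing these bounds gives the term $c x\varphi(x)$ with $\varphi=1$ and $c := \frac{1}{2}\sum|g_{i,j_{1}}g_{i,j_{2}}|(1\vee N)^{\eta_{i,j_{1}}+\eta_{i,j_{2}}}$. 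This $c$ is measurable, and it is locally integrable because $g$ is locally square-integrable (so $|g_{i,j_{1}}g_{i,j_{2}}|\leq \frac{1}{2}(g_{i,j_{1}}^{2}+g_{i,j_{2}}^{2})$ is locally integrable) and $N$ is continuous, hence locally bounded.

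The second inequality of~\eqref{co:4} is obtained symmetrically. Evaluating at $(N-x,x)$ gives the expansion
\begin{equation*}
\sum_{i=1}^{d}\sum_{j_{1},j_{2}=1}^{m} g_{i,j_{1}}g_{i,j_{2}}(N-x)^{\zeta_{i,j_{1}}+\zeta_{i,j_{2}}}x^{\eta_{i,j_{1}}+\eta_{i,j_{2}}},
\end{equation*}
and now the split is according to whether $\eta_{i,j_{1}}+\eta_{i,j_{2}}<2$. The singular part is controlled by the second inequality in~(ii), namely by $\dot{N}-\sum_{i=0}^{k}b_{i}(\cdot,N,\nu)N^{i}$. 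The regular part again contributes at most $c' x$, with $c'$ defined as before but with $(1\vee N)^{\zeta_{i,j_{1}}+\zeta_{i,j_{2}}}$ in place of $(1\vee N)^{\eta_{i,j_{1}}+\eta_{i,j_{2}}}$. Taking $c+c'$ as the common function completes the verification of~\eqref{co:4}.

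The main obstacle I expect is purely bookkeeping: the restriction $x<\varepsilon$, $x\leq N$ must be respected, and the case $N(s)<1$ must be handled so that the power bounds stay uniform. I will use $(1\vee N)$ throughout, together with $x\leq 1$, to keep all estimates valid on this set. Note also that the regular terms in the expansion may have either sign, which is why I bound them by absolute values; the singular terms need no such treatment, since their signed sum is controlled directly by the maxima in~(ii).
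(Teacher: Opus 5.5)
Your proposal is correct and is essentially the paper's argument. Both expand $|f|^{2}$ via~(i), split the index pairs according to whether the relevant exponent sum is below $2$, and control the singular part by the maxima in~(ii). The regular part is then bounded on $]0,1]$ by a locally integrable multiple of $x$, which amounts to $\varepsilon=1$ and $\varphi=1$ in~\eqref{co:4}. The only cosmetic differences are that the paper uses $(g_{i,j_{1}}g_{i,j_{2}})^{+}$ and $N^{\eta_{i,j_{1}}+\eta_{i,j_{2}}}$ where you use absolute values and $(1\vee N)$, and that the paper leaves the local integrability of $c$ implicit, which you verify.
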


\begin{proof}
By hypothesis, $|f(\cdot,x,y)|^{2} \leq \sum_{i=1}^{d}\sum_{j_{1},j_{2} = 1}^{m}g_{i,j_{1}}g_{i,j_{2}} x^{\zeta_{i,j_{1}} + \zeta_{i,j_{2}}}y^{\eta_{i,j_{1}} + \eta_{i,j_{2}}}$ for any $x,y\geq 0$. Further, 
\begin{align*}
\frac{1}{2x^{2}}\sum_{i=1}^{d}\sum_{j_{1},j_{2}=1}^{m}g_{i,j_{1}}g_{i,j_{2}}x^{\zeta_{i,j_{1}} + \zeta_{i,j_{2}}}(N-x)^{\eta_{i,j_{1}} + \eta_{i,j_{2}}} &\leq \frac{b_{0}(\cdot,N,\nu)}{x} + \frac{1}{2}c_{0}
\quad\text{and}\\
\frac{1}{2x^{2}}\sum_{i=1}^{d}\sum_{j_{1},j_{2}=1}^{m}g_{i,j_{1}}g_{i,j_{2}} (N-x)^{\zeta_{i,j_{1}} + \zeta_{i,j_{2}}}x^{\eta_{i,j_{1}} + \eta_{i,j_{2}}} &\leq \frac{\dot{N} - \sum_{i=0}^{k}b_{i}(\cdot,N, \nu)N^{i}}{x}  + \frac{1}{2}c_{1}
\end{align*}
for all $x\in ]0,1]$ with $x\leq N$ and $\nu\in\mathcal{P}_{0}(\R)$, where
\begin{equation*}
c_{0}:= \sum_{i=1}^{d}\sum_{\substack{j_{1},j_{2}=1,\\  \zeta_{i,j_{1}} + \zeta_{i,j_{2}} \geq 2}}^{m}(g_{i,j_{1}}g_{i,j_{2}})^{+}N^{\eta_{i,j_{1}} + \eta_{i,j_{2}}},\quad c_{1}:=\sum_{i=1}^{d}\sum_{\substack{j_{1},j_{2}=1,\\  \eta_{i,j_{1}} + \eta_{i,j_{2}} \geq 2}}^{m}(g_{i,j_{1}}g_{i,j_{2}})^{+}N^{\zeta_{i,j_{1}} + \zeta_{i,j_{2}}}.
\end{equation*}
Therefore,~\eqref{co:4} holds, as claimed.
\end{proof}

\begin{proof}[Proof of Proposition~\ref{pr:existence, uniqueness and a growth estimate}]
The drift $\overline{b}\colon\R_{+}\times\R\times\mathcal{P}_{1}(\R)\rightarrow\R$ and diffusion coefficient $\overline{\sigma}\colon\R_{+}\times\R\rightarrow\R^{1\times d}$ of the extended McKean--Vlasov SDE~\eqref{eq:extended McKean--Vlasov SDE}, given by~\eqref{eq:extended coefficients}, satisfy the regularity conditions~\eqref{eq:partial Lipschitz condition on the extended drift coefficient} and~\eqref{eq:Hoelder condition on compact sets on the extended diffusion coefficient}.

Consequently,~\cite[Theorem~3.1]{KalMeyPro24-2} yields a unique strong solution $\overline{I}$ to~\eqref{eq:extended McKean--Vlasov SDE} such that $\overline{I}_{0} = \xi_{0}$ a.s.~and $\sup_{t\in [0,T]}\E[|\overline{I}_{t}|] < \infty$ for all $T > 0$. If we can show that $0\leq \overline{I}\leq N$ a.s., then any continuous modification $I$ of $\overline{I}$ satisfying $0\leq I \leq N$ serves as strong solution to~\eqref{eq:SIS epidemic model}, and the first assertion follows in view of Proposition~\ref{pr:pathwise uniqueness}.

We recall from~\cite[Example~2.2]{KalMeyPro24-2} that the law map $\overline{\nu}\colon\R_{+}\rightarrow\mathcal{P}_{1}(\R)$ of $\overline{I}$ given by $\overline{\nu}(t) := \mathcal{L}(\overline{I}_{t})$ is Borel measurable and define $\overline{b}_{\overline{\nu}}\colon\R_{+}\times\R\rightarrow\R$ by $\overline{b}_{\overline{\nu}}(t,x) := \overline{b}(t,x,\overline{\nu}(t))$. Then $\overline{I}$ solves the SDE
\begin{equation}\label{eq:induced SDE}
\mathrm{d}I_{t} = \overline{b}_{\overline{\nu}}(t,I_{t})\,\mathrm{d}t + \overline{\sigma}(t,I_{t})\,\mathrm{d}W_{t}
\end{equation}
for $t\geq 0$. Since $\overline{b}(\cdot,x,\nu) = \overline{b}_{0}(\cdot,N,\nu)\geq 0$ and $\overline{\sigma}(\cdot,0) = 0$ for all $x\leq 0$ and $\nu\in\mathcal{P}_{1}(\R)$, the process vanishing identically solves~\eqref{eq:induced SDE} after subtracting $\overline{b}_{0}(\cdot,N,\overline{\nu})$ from the drift coefficient. Consequently, a comparison principle for solutions to SDEs based on the Yamada--Watanabe approach, as given in~\cite[Proposition 2.18]{KarShr91}, ensures that $\overline{I}\geq 0$ a.s.

Next, we recall that $N$ is locally absolutely continuous and $N(0) \geq \xi_{0}$. Hence, it follows readily that the process $N - \overline{I}$ solves the McKean--Vlasov SDE
\begin{equation}\label{eq:transformed McKean--Vlasov SDE}
\mathrm{d}X_{t} = \big(\dot{N}(t) - \overline{b}\big(t,N(t) - X_{t},\mathcal{L}(N(t) - X_{t})\big)\big)\,\mathrm{d}t - \overline{\sigma}(t,N(t) - X_{t})\,\mathrm{d}W_{t}
\end{equation}
for $t\geq 0$. Noting that $\overline{b}(\cdot,N - x,\nu) = \sum_{i=0}^{k}\overline{b}_{i}(\cdot,N,\nu)N^{i} \leq \dot{N}$ and $\overline{\sigma}(\cdot,N) = 0$ for all $x\leq 0$ and $\nu\in\mathcal{P}_{1}(\R)$, a comparison argument based on the Yamada--Watanabe approach shows that $N \geq \overline{I}$ a.s. Therefore, the existence and uniqueness assertions are established.

The estimate~\eqref{eq:first moment growth estimate} and~(i) are direct consequences of~\cite[Theorem~4.2]{KalMeyPro24-2}. In fact, $I$ is a random It{\^o} process with drift $\mathrm{B}\colon\R_{+}\times\Omega\rightarrow\R$ and diffusion $\Sigma\colon\R_{+}\times\Omega\rightarrow\R^{1\times d}$ given by
\begin{equation}\label{eq:random drift and diffusion}
\mathrm{B}_{s} := \sum_{i=0}^{k} b_{i}\big(s,N(s),\mathcal{L}(I_{s})\big)I_{s}^{i}\quad\text{and}\quad \Sigma_{s} :=  f(s,I_{s},N(s) - I_{s}).
\end{equation}
Since the partial affine growth condition~\eqref{eq:partial growth condition on the drift coefficient} for the drift coefficient of~\eqref{eq:SIS epidemic model} ensures that $\mathrm{B}_{s} \leq \hat{b}_{0}(s,N(s)) + \hat{b}_{k+2}(s,N(s))I_{s}$ for all $s\geq 0$, the hypotheses of~\cite[Theorem~4.2]{KalMeyPro24-2} are satisfied.

To verify~(ii), we note that $I$ is a solution to the SDE~\eqref{eq:extended McKean--Vlasov SDE}, since $\overline{\nu}(s) = \mathcal{L}(I_{s})$ for all $s\geq 0$. Hence, we may apply~\cite[Proposition~3.3]{BriGraKal24}. To this end, it follows from~\eqref{co:3} and~\eqref{co:4} that
\begin{equation}\label{eq:random drift and diffusion estimate 1}
\frac{1}{2}\frac{|\overline{\sigma}(s,x)|^{2}}{x^{2}} \leq \frac{\overline{b}(s,x,\overline{\nu}(s))}{x} + c(s)\varphi(x) +  c_{0}(s)
\end{equation}
for all $s\geq 0$ and $x\in ]0,\varepsilon\wedge 1[$ with $x\leq N(s)$. Here, the function $c_{0} := \sum_{i=1}^{k} b_{i}^{-}(\cdot,N,\overline{\nu})$ is measurable and locally integrable. The measurable map $\nu_{0}\colon\R_{+}\rightarrow\mathcal{P}_{0}(\R)$ defined by $\nu_{0}(t) := \mathcal{L}(N(t) - I_{t})$ satisfies
\begin{equation}\label{eq:random drift and diffusion estimate 2}
\frac{1}{2}\frac{|\overline{\sigma}(s,N(s) - x)|^{2}}{x^{2}}  \leq \frac{\dot{N}(s) - \overline{b}(s,N(s) - x,\nu_{0}(s))}{x} + c(s)\varphi(x) + c_{1}(s)
\end{equation}
for any $s\geq 0$ and $x\in ]0,\varepsilon\wedge 1[$ with $x\leq N(s)$, where $c_{1} := \sum_{i=1}^{k}b_{i}^{-}(\cdot,N,\nu_{0})iN^{i-1}$ is measurable and locally integrable. Thus, condition~(V.5) in~\cite{BriGraKal24} for the application of Proposition~3.3 is valid, and the verification is complete.
\end{proof}

\begin{proof}[Proof of Proposition~\ref{pr:pathwise uniqueness 2}]
By the same reasoning as in the proof of Proposition~\ref{pr:pathwise uniqueness}, it suffices to verify pathwise uniqueness for~\eqref{eq:extended McKean--Vlasov SDE} relative to $\Theta:\R_{+}\times\mathcal{P}_{p}(\R)\times\mathcal{P}_{p}(\R)\rightarrow\R_{+}$ given by $\Theta(s,\nu,\tilde{\nu}) := \hat{\lambda}_{k+1}(s,N(s))\mathcal{W}_{p}(\nu,\tilde{\nu})^{2}$.

In this context, we recall that the drift $\overline{b}\colon\R_{+}\times\R\times\mathcal{P}_{p}(\R)\rightarrow\R$ and diffusion coefficient $\overline{\sigma}\colon\R_{+}\times\R\rightarrow\R^{1\times d}$ of~\eqref{eq:extended McKean--Vlasov SDE} are given by~\eqref{eq:extended coefficients}. Then from~\eqref{eq:domination condition} we obtain~\eqref{eq:partial Lipschitz condition on the extended drift coefficient} for all $x,\tilde{x}\in\R$ and $\nu,\tilde{\nu}\in\mathcal{P}_{p}(\R)$, which is a partial Lipschitz condition for $\overline{b}$. The Lipschitz condition~\eqref{eq:Lipschitz condition on the diffusion coefficient} for the diffusion coefficient of~\eqref{eq:SIS epidemic model} extends to
\begin{equation}\label{eq:Lipschitz condition on the extended diffusion coefficient}
|\overline{\sigma}(\cdot,x) - \overline{\sigma}(\cdot,\tilde{x})| \leq \lambda(\cdot,N)|x-\tilde{x}|
\end{equation}
for all $x,\tilde{x}\in\R$. Hence,~\cite[Corollary~3.5]{KalMeyPro24} implies pathwise uniqueness for~\eqref{eq:extended McKean--Vlasov SDE} relative to $\Theta$, as the required condition~(C.2) therein is satisfied.

Next, the random It{\^o} process $Y := I - \tilde{I}$ has random drift $\mathrm{B}$ and diffusion $\Sigma$ given by~\eqref{eq:random drift and diffusion coefficients} satisfying $Y_{s}\mathrm{B}_{s} \leq |Y_{s}|(\hat{b}_{k+1}(s,N(s))|Y_{s}| +  \hat{\lambda}_{k+1}(s,N(s))\vartheta(\mathcal{L}(I_{s}),\mathcal{L}(\tilde{I}_{s})))$ and $|\Sigma_{s}|$ $\leq \lambda(s,N(s))|Y_{s}|$ for any $s\geq 0$. Consequently, from~\cite[Theorem~4.6]{KalMeyPro24} the remaining claims follow.
\end{proof}

\begin{proof}[Proof of Proposition~\ref{pr:existence, uniqueness and a growth estimate 2}]
The drift and diffusion coefficients of~\eqref{eq:extended McKean--Vlasov SDE}, defined by~\eqref{eq:extended coefficients}, satisfy~\eqref{eq:partial Lipschitz condition on the extended drift coefficient} and~\eqref{eq:Lipschitz condition on the extended diffusion coefficient} for all $x,\tilde{x}\in\R$ and $\nu,\tilde{\nu}\in\mathcal{P}_{p}(\R)$. Thus,~\cite[Theorem~3.24]{KalMeyPro24} ensures the existence of a unique strong solution $\overline{I}$ to~\eqref{eq:extended McKean--Vlasov SDE} satisfying $\overline{I}_{0} = \xi_{0}$ a.s.~and $\sup_{t\in [0,T]}\E[|\overline{I}_{t}|^{p}] < \infty$ for all $T > 0$.

As in the proof of Proposition~\ref{pr:existence, uniqueness and a growth estimate}, we note that the map $\overline{\nu}\colon\R_{+}\rightarrow\mathcal{P}_{p}(\R)$ defined by $\overline{\nu}(t) := \mathcal{L}(\overline{I}_{t})$ is Borel measurable, and $\overline{I}$ solves the SDE~\eqref{eq:induced SDE}, where $\overline{b}_{\overline{\nu}}\colon\R_{+}\times\R\rightarrow\R$ is given by $\overline{b}_{\overline{\nu}}(t,x) := \overline{b}(t,x,\overline{\nu}(t))$.

As $\overline{b}(\cdot,x,\nu) \geq 0$, $\dot{N} - \overline{b}(\cdot,N - x,\nu) \geq 0$ and $\overline{\sigma}(\cdot,0) = \overline{\sigma}(\cdot,N) = 0$ for all $x\leq 0$ and $\nu\in\mathcal{P}_{p}(\R)$, a comparison principle for solutions to SDEs yields that $0\leq \overline{I} \leq N$ a.s. See~\cite[Proposition 2.18]{KarShr91}, for instance. Thus, any continuous modification $I$ of $\overline{I}$ satisfying $0\leq I \leq N$ is a strong solution to~\eqref{eq:SIS epidemic model}. By Proposition~\ref{pr:pathwise uniqueness 2}, this verifies the existence and uniqueness claims.

Now the estimate~\eqref{eq:pth moment growth estimate} and~(i) are implied by~\cite[Theorem 4.6]{KalMeyPro24}. Indeed, $I$ is a random It{\^o} process with random drift $\mathrm{B}$ and diffusion $\Sigma$ given by~\eqref{eq:random drift and diffusion}. Further, $\mathrm{B}_{s}$ $\leq \hat{b}_{0}(s, N(s))  + \hat{b}_{k+2}(s, N(s))I_{s}$ and $|\Sigma_{s}| \leq l(s, N(s))I_{s}$ for all $s\geq 0$, by the growth conditions~\eqref{eq:partial growth condition on the drift coefficient} and~\eqref{eq:linear growth condition on the diffusion coefficient}. So, the requirements of~\cite[Theorem 4.6]{KalMeyPro24} are satisfied.

The last assertion~(ii) follows as in the proof of Proposition~\ref{pr:existence, uniqueness and a growth estimate} from~\cite[Proposition~3.3]{BriGraKal24}, since~\eqref{co:3} and~\eqref{co:4} entail the inequalities~\eqref{eq:random drift and diffusion estimate 1}~and~\eqref{eq:random drift and diffusion estimate 2}.
\end{proof}

\subsection{Proofs for the pathwise asymptotic behaviour}

\begin{proof}[Proof of Lemma~\ref{le:pathwise asymptotic identities}]
Since $I$ takes positive values only, It{\^o}'s formula yields that
\begin{equation*}
\log(I) = \log(I_{0}) + \int_{0}^{\cdot}h\big(s,I_{s},N(s),\mathcal{L}(I_{s})\big)\,\mathrm{d}s + M\quad\text{a.s.}
\end{equation*}
for the continuous $\mathbb{F}$-local martingale $M := \int_{0}^{\cdot}f(s,I_{s},N(s)-I_{s})I_{s}^{-1}\,\mathrm{d}W_{s}$. By~\eqref{co:6} and monotone convergence, the quadratic variation of $\int_{0}^{\cdot}\frac{1}{1 + s^{\rho}}\,\mathrm{d}M_{s}$ satisfies
\begin{equation*}
\lim_{t\uparrow\infty}\bigg\langle\int_{0}^{\cdot}\frac{1}{1 + s^{\rho}}\,\mathrm{d}M_{s}\bigg\rangle_{t} \leq \int_{0}^{\infty} \frac{l(s,N(s))^{2}}{(1 + s^{\rho})^{2}}\,\mathrm{d}s < \infty\quad\text{a.s.}
\end{equation*}
Hence, the strong law of large numbers for continuous local martingales in~\cite[Theorem~1]{Lip80} implies that
\begin{equation*}
\lim_{t\uparrow\infty}\frac{M_{t}}{t^{\rho}} = 0\quad\text{a.s.}
\end{equation*}
These considerations show the two asserted identities.
\end{proof}

\begin{proof}[Proof of Proposition~\ref{pr:extinction}]
For any $\varepsilon > 0$ and $\omega\in\Omega$ there is some $t_{\varepsilon,\omega} \geq 0$ such that $h(s,I_{s}(\omega),N(s),\mathcal{L}(I_{s}))$ $< u(s) + \varepsilon s^{\rho-1}$ for all $s > t_{\varepsilon,\omega}$. Hence,
\begin{equation*}
\limsup_{t\uparrow\infty}\frac{1}{t^{\rho}}\int_{0}^{t}h\big(s,I_{s}(\omega),N(s),\mathcal{L}(I_{s})\big)\,\mathrm{d}s \leq \limsup_{t\uparrow\infty} \frac{1}{t^{\rho}}\int_{0}^{t} u(s)\,\mathrm{d}s + \frac{\varepsilon}{\rho},
\end{equation*}
and the first assertion follows from Lemma~\ref{le:pathwise asymptotic identities}. Next, since $\lim_{t\uparrow\infty} I_{t} = 0$ a.s.~and $N$ is bounded, we have $\lim_{t\uparrow\infty}\vartheta(\mathcal{L}(I_{t}),\delta_{0}) = 0$, by dominated convergence. For this reason,
\begin{equation*}
\lim_{s\uparrow\infty} h\big(s,I_{s}(\omega),N(s),\mathcal{L}(I_{s})\big) = h_{\infty}
\end{equation*}
and hence, $\lim_{t\uparrow\infty} \frac{1}{t}\int_{0}^{t}|h(s,I_{s}(\omega),N(s),\mathcal{L}(I_{s})) - h_{\infty}|\,\mathrm{d}s = 0$ for each $\omega\in\Omega$ satisfying $\lim_{t\uparrow\infty} I_{t}(\omega) = 0$. This shows the second claim, by another application of Lemma~\ref{le:pathwise asymptotic identities}.
\end{proof}

\begin{proof}[Proof of Proposition~\ref{pr:extinction 1}]
As~\eqref{a.2} is equivalent to $h_{3}(\cdot,N) < 0$, we infer from Lemma~\ref{le:maximisation 1} that
\begin{equation*}
\max_{x\in [0,N]} h(\cdot,x,N,\nu) \leq
\begin{cases}
h_{1}(\cdot,N,\nu) - \frac{1}{4}\frac{(h_{2}(\cdot,N,\nu_{0})^{+})^{2}}{h_{3}(\cdot,N)} & \text{if $h_{2}(\cdot,N,\nu_{0}) < - 2h_{3}(\cdot,N)N$}\\
-(\mu + \gamma) & \text{otherwise}
\end{cases}
\end{equation*}
for all $\nu\in\mathcal{P}_{0}(\R)$ with $\nu_{0} := \delta_{0}\mathbbm{1}_{\R_{+}}(\beta_{1}) + \delta_{N}\mathbbm{1}_{]-\infty,0[}(\beta_{1})$, since $h_{2}(\cdot,N,\nu) \leq h_{2}(\cdot,N,\nu_{0})$ and $h(\cdot,N,N,\nu) = -(\mu + \gamma)$. Hence, from
\begin{align*}
\limsup_{t\uparrow \infty} h_{1}\big(t,N(t),\mathcal{L}(I_{t})\big) &\leq - (\mu_{\infty} + \gamma_{\infty}) - u_{I} + u_{1}N_{\infty} + \bigg(c_{1,2,\infty} - \frac{1}{2}u_{3}\bigg)N_{\infty}^{2},\\
\lim_{t\uparrow \infty} h_{2}\big(t,N(t),\nu_{0}\big) &= u_{2} + \big(c_{2,1,\infty} + u_{3}\big)N_{\infty} + c_{2,2,\infty}N_{\infty}^{2}
\end{align*}
and $\lim_{t\uparrow \infty}h_{3}(t,N(t)) = -u_{4}$ we conclude that $\limsup_{t\uparrow\infty}h(t, I_{t}, N(t),\mathcal{L}(I_{t})) \leq u$. For this reason, Proposition~\ref{pr:extinction} yields the claim.
\end{proof}

\begin{proof}[Proof of Proposition~\ref{pr:extinction 2}]
Since $h(\cdot,N,N,\mathcal{L}(I_{\cdot})) = -(\mu + \gamma)$, Lemmas~\ref{le:maximisation 1} and~\ref{le:maximisation 2} and Proposition~\ref{pr:maximisation 3} give us that
\begin{equation}\label{eq:extinction 2}
\max_{x\in [0,N]} h(\cdot,x,N,\mathcal{L}(I_{\cdot})) = \max\big\{h_{1}(\cdot,N,\mathcal{L}(I_{\cdot})) - g_{1,1}g_{1,2}N^{\frac{3}{2}},-(\mu + \gamma)\big\}.
\end{equation}
Indeed, if $c_{1,2}(t) = c_{2,1}(t) = c_{2,2}(t) = g_{1,1}(t) = 0$ for some $t\geq 0$, then $h_{3}(t,y) = 0$ for all $y\geq 0$ and~\eqref{eq:extinction 2} follows directly from Lemma~\ref{le:maximisation 1}. Hence,
\begin{equation*}
\limsup_{t\uparrow\infty}\max_{x\in [0,N(t)]} h\big(t,x,N,\mathcal{L}(I_{t})\big) \leq u
\end{equation*}
for $u := -(\mu_{\infty}+\gamma_{\infty}) + (u_{1}N_{\infty} - u_{2}N_{\infty}^{\frac{3}{2}} + u_{3}N_{\infty}^{2})^{+}$. Since $u < 0$ is equivalent to $\mu_{\infty} + \gamma_{\infty} > 0$ and~\eqref{eq:extinction in the epidemic models 10}, the claim follows from Proposition~\ref{pr:extinction}.
\end{proof}

\begin{proof}[Proof of Lemma~\ref{le:extinction 3}]
Because $h_{3}(\cdot,N)\geq 0$, Lemma~\ref{le:maximisation 1} yields that $\max_{x\in [0,N]} h(\cdot,x,N,\nu)$ $= \max\{h_{1}(\cdot,N,\nu),-(\mu + \gamma)\}$ for any $\nu\in\mathcal{P}_{0}(\R)$. Hence,
\begin{equation*}
\limsup_{t\uparrow\infty}h_{1}\big(t,N(t),\mathcal{L}(I_{t})\big) \leq u
\end{equation*}
for $u := -(\mu_{\infty} + \gamma_{\infty} + u_{I}) + (u_{1} + u_{2}N_{\infty})N_{\infty}$, by~\eqref{eq:extinction in the epidemic models 4} and~\eqref{eq:extinction in the epidemic models 12}. Since $u_{I}\geq 0$, the claim is implied by Proposition~\ref{pr:extinction}.
\end{proof}

\begin{proof}[Proof of Proposition~\ref{pr:persistence}]
(i) By way of contradiction, suppose that $\{\limsup_{t\uparrow\infty} I_{t} < x_{0}\}$ fails to be null. Then the event $A:=\{\limsup_{t\uparrow\infty} I_{t} < x_{0} - \delta\}$ has positive probability for some $\delta\in ]0,x_{0}[$, by the $\sigma$-continuity of probability measures.

Furthermore, for any $\omega\in A$ and $\varepsilon > 0$ there is $s_{\omega,\varepsilon}\geq 0$ such that $I_{s}(\omega) < x_{0} - \delta$ and $h(s,x,N(s),\mathcal{L}(I_{s})) \geq w_{\delta}(s) - \varepsilon s^{\rho-1}$ for all $s > s_{\omega,\varepsilon}$ and $x\in ]0,N(s)]$ with $x < x_{0} - \delta$. This leads to
\begin{equation*}
\limsup_{t\uparrow\infty}\frac{1}{t^{\rho}}\int_{0}^{t}h\big(s,I_{s}(\omega),N(s),\mathcal{L}(I_{s})\big)\,\mathrm{d}s \geq \limsup_{t\uparrow\infty} \frac{1}{t^{\rho}}\int_{0}^{t}w_{\delta}(s)\,\mathrm{d}s - \frac{\varepsilon}{\rho}.
\end{equation*}
However, this implies the contradiction that $\limsup_{t\uparrow\infty} I_{t}(\omega) = \infty$ for a.e.~$\omega\in A$, according to Lemma~\ref{le:pathwise asymptotic identities}.

(ii) Assume that the almost sure inequality fails. Then there is $\delta > 0$ such that $A:=\{\liminf_{t\uparrow\infty} I_{t} > x_{0} + \delta\}$ has positive probability. Further, for any $\omega\in A$ and $\varepsilon > 0$ we may take $s_{\omega,\varepsilon}\geq 0$ such that $I_{s}(\omega) > x_{0} + \delta$ and $h(s,x,N(s),\mathcal{L}(I_{s})) < w_{\delta}(s) + \varepsilon s^{\rho-1}$ for any $s > s_{\omega,\varepsilon}$ and $x\in ]0,N(s)]$ with $x > x_{0} + \delta$. Hence,
\begin{equation*}
\liminf_{t\uparrow\infty}\frac{1}{t^{\rho}}\int_{0}^{t}h\big(s,I_{s}(\omega),N(s),\mathcal{L}(I_{s})\big)\,\mathrm{d}s \leq \liminf_{t\uparrow\infty} \frac{1}{t^{\rho}}\int_{0}^{t}w_{\delta}(s)\,\mathrm{d}s + \frac{\varepsilon}{\rho}.
\end{equation*}
However, for Lemma~\ref{le:pathwise asymptotic identities} to be valid, we must have $\liminf_{t\uparrow\infty} I_{t}(\omega) = 0$ for a.e.~$\omega\in A$, which is impossible.
\end{proof}

\begin{proof}[Proof of Corollary~\ref{co:persistence}]
We seek to infer both claims from Proposition~\ref{pr:persistence}. (i) Since $\liminf_{x\downarrow 0} f_{\infty}(x) > 0$, there are $c_{0} > 0$ and $\delta_{0}\in ]0,N_{\infty}[$ such that $f_{\infty}(x) \geq c_{0}$ for all $x\in ]0,\delta_{0}[$. By continuity, this ensures that $f_{\infty}$ has indeed a smallest zero and it takes the form $x_{0} = \min\{x\in ]0,N_{\infty}]\cap\R_{+}\mid f_{\infty}(x) = 0\}$.

Next, for $\varepsilon > 0$ we may choose $s_{\varepsilon}\geq 0$ such that $h(s,x,N(s),\mathcal{L}(I_{s})) \geq f_{\infty}(x) - \varepsilon$ for all $s > s_{\varepsilon}$ and $x\in ]0,N(s)]$ with $x < x_{0}$. Then for each $\delta\in ]0,x_{0}[$ the positive number $w_{\delta} := \inf_{x\in ]0,x_{0}-\delta[}f_{\infty}(x)$ satisfies $h(s,x,N(s),\mathcal{L}(I_{s})) \geq w_{\delta} - \varepsilon$ for any $s > s_{\varepsilon}$ and $x\in ]0,N(s)]$ with $x < x_{0} - \delta$. Hence, Proposition~\ref{pr:persistence} is applicable.

(ii) By hypothesis, there are $c_{0},\delta_{0} > 0$ and $c_{1} \in ]0,N_{\infty}[$ such that $g_{\infty}(x) \leq - c_{0}$ for all $x\in ]c_{1},N_{\infty} + \delta_{0}[$. As $g_{\infty}$ is continuous, this guarantees that $g_{\infty}$ has in fact a largest zero in $[0,N_{\infty}[$ and $y_{0} = \max\{x\in [0,N_{\infty}[\,\mid g_{\infty}(x) = 0\}$.

Then for any $\varepsilon > 0$ there is $s_{\varepsilon}\geq 0$ such that $N(s) < N_{\infty} + \delta_{0}$ and $h(s,x,N(s),\mathcal{L}(I_{s}))$ $\leq g_{\infty}(x) + \varepsilon$ for all $s > s_{\varepsilon}$ and $x\in ]0,N(s)]$ with $x > y_{0}$. So, for each $\delta > 0$ the number $w_{\delta}:= \sup_{x\in ]y_{0} +\delta\wedge\delta_{0},N_{\infty} + \delta_{0}[} g_{\infty}(x)$ is negative and $h(s,x,N(s),\mathcal{L}(I_{s})) \leq w_{\delta} + \varepsilon$ for any $s > s_{\varepsilon}$ and $x\in ]0,N(s)]$ with $x > y_{0} + \delta$, as required.
\end{proof}

\begin{proof}[Proof of Corollary~\ref{co:persistence 2}]
We show that both assertions are implied by Corollary~\ref{co:persistence}. (i) In view of Example~\ref{ex:transformed function in the representative model}, for each $y\in ]0,N_{\infty}]$ we may infer from~\eqref{eq:specific limits} and the definition of $a_{\infty},b_{\infty},c_{\infty},d_{\infty}$ that
\begin{equation*}
\liminf_{s\uparrow\infty}\inf_{x\in ]0,N(s)]:\, x < y} h\big(s,x,N(s),\mathcal{L}(I_{s})\big) - f_{\infty}(x)\geq 0
\end{equation*}
and $f_{\infty}(N_{\infty}) \leq \liminf_{s\uparrow \infty} h(s,N(s),N(s),\mathcal{L}(I_{s})) = -(\mu_{\infty} + \gamma_{\infty}) \leq 0$. By the intermediate value theorem, the conditions in~(i) of Corollary~\ref{co:persistence} are satisfied, and the representation of $x_{0}$ in the case $d_{\infty} = 0$  follows from Lemma~\ref{le:unique zero of a sum of power functions} and Remark~\ref{re:unique zero of a sum of power functions}.

(ii) By assumption, $g_{\infty}(0) \geq 0$ and $\lim_{x\uparrow N_{\infty}} g_{\infty}(x) < 0$. This ensures that $g_{\infty}$ has at least one zero, which in the case $d_{\infty} = 0$ is unique and of the asserted form, according to Lemma~\ref{le:unique zero of a sum of power functions}.

Furthermore, $\limsup_{s\uparrow\infty}\sup_{x\in ]0,N(s)]:\, x > y} h(s,x,N(s),\mathcal{L}(I_{s})) - g_{\infty}(x)\leq 0$ for every $y\in [0,N_{\infty}[$, due to~\eqref{eq:specific limits} and the definition of $\hat{a}_{\infty},\hat{b}_{\infty},\hat{c}_{\infty},\hat{d}_{\infty}$. Hence, the requirements in~(ii) of Corollary~\ref{co:persistence} are met, and the proof is complete.
\end{proof}

\subsection{Proofs for the Euler--Maruyama scheme}

\begin{proof}[Proof of Proposition~\ref{pr:EM-estimate}]
According to the growth conditions~\eqref{eq:affine growth condition on the drift coefficient} and~\eqref{eq:linear growth condition on the diffusion coefficient} on the drift and diffusion coefficients of~\eqref{eq:SIS epidemic model}, the $(\mathcal{F}_{s})_{s\in [t_{j,n},t_{j+1,n}]}$-progressively measurable process $Z^{(j,n)}\colon [t_{j,n},t_{j+1,n}]\times\Omega\rightarrow\R$ given by
\begin{align*}
Z_{s}^{(j,n)} &:=\hat{I}_{s}^{(n)}\sum_{i=0}^{k}\overline{b}_{i}(t_{j,n},N(t_{j,n}),L_{j,n})\big((I_{t_{j,n}}^{(n)})^{+}\wedge N(t_{j,n})\big)^{i}\\
&\quad + c_{p}\big|\overline{f}\big(t_{j,n},I_{t_{j,n}}^{(n)},N(t_{j,n}) - I_{t_{j,n}}^{(n)}\big)\big|^{2}
\end{align*}
satisfies
\begin{equation*}
Z_{s}^{(j,n)} \leq |\hat{I}_{s}^{(n)}|\big(\hat{b}_{0}(t_{j,n},N(t_{j,n})) + \hat{b}_{k+3}(t_{j,n},N(t_{j,n}))|I_{t_{j,n}}^{(n)}|\big) + c_{p}l(t_{j,n},N(t_{j,n}))^{2}\big(I_{t_{j,n}}^{(n)}\big)^{2}
\end{equation*}
for fixed $n\in\N$ and $j\in\{0,\dots,k_{n}-1\}$ and any $s\in [t_{j_{n}},t_{j+1,n}]$. Now we proceed as in the proof of Theorem~4.6 in~\cite{KalMeyPro24}. Namely, from the inequalities of Hölder and Young we infer that
\begin{equation*}
p\E\big[|\hat{I}_{s}^{(n)}|^{p-2}Z_{s}^{(j,n)}\mathbbm{1}_{\{\tau > s\}}\big] \leq \hat{b}_{0}(t_{j,n},N(t_{j,n})) + l_{p,j,n,2}\E\big[|I_{t_{j,n}}^{(n)}|^{p}\big] + l_{p,j,n,1}\E\big[|\hat{I}_{s}^{(n)}|^{p}\mathbbm{1}_{\{\tau > s\}}\big]
\end{equation*}
for all $s\in [t_{j,n},t_{j+1,n}]$ and any $(\mathcal{F}_{s})_{s\in [t_{j,n},t_{j+1,n}]}$-stopping time $\tau$ with the two constants
\begin{equation*}
l_{p,j,n,1} := \big((p-1)(\hat{b}_{0} + \hat{b}_{k+3}) + (p-2)c_{p}l^{2}\big)(t_{j,n},N(t_{j,n}))
\end{equation*}
and $l_{p,j,n,2} := (\hat{b}_{k+3} + 2c_{p}l^{2})(t_{j,n},N(t_{j,n}))$. Hence, it follows inductively from Lemma 4.2 in~\cite{KalMeyPro24} and Fatou's lemma that $\hat{I}^{(n)}$ is $p$-fold integrable and 
\begin{align*}
\E\big[|\hat{I}_{t}^{(n)}|^{p}\big] &\leq  e^{l_{p,j,n,1}(t-t_{j,n})}\E\big[|I_{t_{j,n}}^{(n)}|^{p}\big]\\
&\quad + \big(\hat{b}_{0}(t_{j,n},N(t_{j,n})) + l_{p,j,n,2}\E\big[|I_{t_{j,n}}^{(n)}|^{p}\big]\big)\int_{t_{j,n}}^{t}e^{l_{p,j,n,1}(t-s)}\,\mathrm{d}s\\
&\leq e^{(l_{p,j,n,1} + l_{p,j,n,2})(t - t_{j,n})}\E\big[|I_{t_{j,n}}^{(n)}|^{p}\big] + e^{l_{p,j,n,1}(t-t_{j,n})}(t-t_{j,n})\hat{b}_{0}(t_{j,n},N(t_{j,n}))
\end{align*}
for all $t\in [t_{j,n},t_{j+1,n}]$. Consequently, we infer the claimed estimate inductively by utilising that $l_{p,i,n,1} + l_{p,i,n,2} \leq k_{p,j,n}$ and $2l_{p,i,n,1} + l_{p,i,n,2} \leq l_{p,j,n}$ for all $i,j\in\{0,\dots,k_{n}-1\}$ with $i\leq j$.
\end{proof}

\begin{proof}[Proof of Lemma~\ref{le:EM-estimate}]
From the triangle inequality in the $L^{p}$-norm, the Cauchy--Schwarz inequality and the two growth conditions~\eqref{eq:affine growth condition on the drift coefficient} and~\eqref{eq:linear growth condition on the diffusion coefficient} we obtain that
\begin{align*}
\E\big[|\hat{I}_{t}^{(n)} - I_{t_{j,n}}^{(n)}\big|^{p}\big]^{\frac{1}{p}} &\leq \big(\hat{b}_{0}(t_{j,n},N(t_{j,n})) + \hat{b}_{k+3}(t_{j,n},N(t_{j,n}))N(t_{j,n})\big)(t-t_{j,n})\\
&\quad  + l(t_{j,n},N(t_{j,n}))N(t_{j,n})\E\big[|W_{t} - W_{t_{j,n}}|^{p}\big]^{\frac{1}{p}}.
\end{align*}
This verifies the desired estimate.
\end{proof}

\begin{proof}[Proof of Proposition~\ref{pr:intermediate error estimate}]
We may assume that the term $\E[|\hat{I}^{(n)} - I|^{p-1}\mathcal{W}_{p}(\mathcal{L}(I),L_{j,n})]$ is integrable over $[t_{j,n},t_{j+1,n}]$, as otherwise the asserted estimate, which we seek to infer from~\cite[Lemma~4.2]{KalMeyPro24}, is infinite.

First of all, Proposition~\ref{pr:EM-estimate} shows that the random It{\^o} process $\hat{I}^{(n)} - I$ and the $(\mathcal{F}_{s})_{s\in [t_{j,n},t_{j+1,n}]}$-progressively measurable process $Z^{(j,n)}\colon [t_{j,n},t_{j+1,n}]\times\Omega\rightarrow\R$ given by
\begin{align*}
Z_{s}^{(j,n)} &:= (\hat{I}_{s}^{(n)} - I_{s})\sum_{i=0}^{k}\overline{b}_{i}(t_{j,n},N(t_{j,n}),L_{j,n})\big((I_{t_{j,n}}^{(n)})^{+}\wedge N(t_{j,n})\big)^{i} - b_{i}\big(s,N(s),\mathcal{L}(I_{s})\big)I_{s}^{i}\\
&\quad + c_{p}\big|\overline{f}\big(t_{j,n},I_{t_{j,n}}^{(n)},N(t_{j,n}) - I_{t_{j,n}}^{(n)}\big) - f(s,I_{s}, N(s) - I_{s})\big|^{2}
\end{align*}
are $p$-fold integrable for fixed $n\in\N$ and $j\in\{0,\dots,k_{n}-1\}$. Further, the two continuity conditions~\eqref{eq:continuity condition on the drift coefficient} and~\eqref{eq:Hoelder condition on the diffusion coefficient} lead to the following respective inequalities:
\begin{align*}
\bigg|\sum_{i=0}^{k}b_{i}(s,N(s),\nu)x^{i} &- \overline{b}_{i}(\tilde{s},N(\tilde{s}),\tilde{\nu})(\tilde{x}^{+}\wedge N(\tilde{s})\big)^{i}\bigg| \leq  \overline{b}_{k+1}(N(\tilde{s}))|s-\tilde{s}|^{\frac{1}{2}}\\
&\quad + \big(\overline{b}_{k+1}(N(\tilde{s})) + \overline{b}_{k+2}(s,N(s),N(\tilde{s}))\big)|N(s) - N(\tilde{s})|\\
&\quad + \hat{b}_{k+4}(s,N(s))|x - \tilde{x}| + \hat{\lambda}_{k+1}(s,N(s))\mathcal{W}_{p}(\nu,\tilde{\nu})
\end{align*}
and
\begin{align*}
|f(s,x,N(s) - &x) - \overline{f}(\tilde{s},\tilde{x},N(\tilde{s}) - \tilde{x})| \leq \lambda_{0}(N(\tilde{s}))|s-\tilde{s}|^{\frac{1}{2}}\\
&\quad + \lambda(s,N(s)\vee N(\tilde{s}))\bigg(\frac{1}{2}|N(s) - N(\tilde{s})| + |x-\tilde{x}|\bigg)
\end{align*}
for any $s,\tilde{s}\geq 0$, $x\in [0,N(s)]$, $\tilde{x}\in\R$, $\nu\in\mathcal{P}_{0}(\R)$ and $\tilde{\nu}\in\mathcal{P}_{p}(\R)$, since we have $f(\cdot,x,0)$ $= f(\cdot,0,y) = 0$ for all $x,y\geq 0$. Thus, from Young's inequality we derive that
\begin{align*}
p\E\big[|\hat{I}_{s}^{(n)} &- I_{s}\big|^{p-2}Z_{s}^{(j,n)}\big] \leq \hat{\lambda}_{k+1}(s,N(s))p\E\big[|\hat{I}_{s}^{(n)} - I_{s}|^{p-1}\mathcal{W}_{p}(\mathcal{L}(I_{s}),L_{j,n})\big]\\
&\quad + p\E\big[|\hat{I}_{s}^{(n)} - I_{s}\big|^{p-1}\big(\zeta_{j,n}(s) + \hat{b}_{k+4}(s,N(s))|I_{t_{j,n}}^{(n)} - I_{s}|\big)\big]\\
&\quad + 3pc_{p}\E\big[|\hat{I}_{s}^{(n)} - I_{s}\big|^{p-2}\big(\eta_{j,n}(s) + \lambda(s,N(s)\vee N(t_{j,n}))^{2}|I_{t_{j,n}}^{(n)} - I_{s}|^{2}\big)\big]\\
&\leq \delta_{p,j,n}(s) + \lambda_{p,j,n}(s)\E\big[|\hat{I}_{s}^{(n)} - I_{s}\big|^{p}\big]
\end{align*}
for all $s\in [t_{j,n},t_{j+1,n}]$, where the two measurable functions $\zeta_{j,n},\eta_{j,n}\colon[t_{j,n},t_{j+1,n}]\rightarrow\R_{+}$ are defined by
\begin{align*}
\zeta_{j,n}(s) &:= \overline{b}_{k+1}(N(t_{j,n}))|s - t_{j,n}|^{\frac{1}{2}}\\
&\quad + \big(\overline{b}_{k+1}(N(t_{j,n})) + \overline{b}_{k+2}(s,N(s),N(t_{j,n}))\big)|N(s) - N(t_{j,n})|,\\
\eta_{j,n}(s) &:= \lambda_{0}(N(t_{j,n}))^{2}|s-t_{j,n}| + \lambda(s,N(s)\vee N(t_{j,n}))^{2}\frac{1}{4}|N(s) - N(t_{j,n})|^{2}.
\end{align*}
In view of Lemma~\ref{le:EM-estimate}, the asserted estimate follows from~\cite[Lemma~4.2]{KalMeyPro24}.
\end{proof}

\begin{proof}[Proof of Theorem~\ref{thm:strong error estimate}]
As the first claimed estimate implies the second, it suffices to prove the first inequality. For notational convenience we shall use the representation~\eqref{eq:empirical measure of interacting particles} of $L_{j,n}$ for any fixed $n\in\N$ and $j\in\{0,\dots,k_{n}-1\}$. Then Proposition~\ref{pr:intermediate error estimate} yields that
\begin{equation}\label{eq:intermediate error estimate}
\begin{split}
&u(t)\E\big[|\hat{I}_{t}^{(n,l)} - I_{t}^{(\ell)}|^{p}\big] \leq 
u(t_{j,n})\E\big[|I_{t_{j,n}}^{(n,l)} - I_{t_{j,n}}^{(\ell)}|^{p}\big]\\
&\quad  + \int_{t_{j,n}}^{t}u(s)\big(\delta_{p,j,n}(s) +\hat{\lambda}_{k+1}(s,N(s))p\E\big[|\hat{I}_{s}^{(n,l)} - I_{s}^{(\ell)}|^{p-1}\mathcal{W}_{p}(\mathcal{L}(I_{s}^{(\ell)}),L_{j,n})\big]\big)\,\mathrm{d}s\\
&\quad + \int_{t_{j,n}}^{t}\big(\dot{u}(s) + u(s)\lambda_{p,j,n}(s)\big)\E\big[|\hat{I}_{s}^{(n,l)} - I_{s}^{(\ell)}|^{p}\big]\,\mathrm{d}s
\end{split}
\end{equation}
for fixed $\ell\in\{1,\dots,M_{n}\}$, all $t\in [t_{j,n},t_{j+1,n}]$ and each absolutely continuous function $u\colon [t_{j,n},t_{j+1,n}]\rightarrow\R_{+}$. The triangle inequality in $\mathcal{P}_{p}(\R)$ entails that
\begin{align*}
\mathcal{W}_{p}\big(\mathcal{L}(I_{s}^{(\ell)}),L_{j,n}\big) &\leq \mathcal{W}_{p}\bigg(\mathcal{L}(I_{s}^{(1)}),\frac{1}{M_{n}}\sum_{m=1}^{M_{n}}\delta_{I_{s}^{(m)}}\bigg)\\
&\quad + \bigg(\frac{1}{M_{n}}\sum_{m=1}^{M_{n}}|I_{s}^{(m)} - \hat{I}_{s}^{(n,m)}|^{p}\bigg)^{\frac{1}{p}} + \bigg(\frac{1}{M_{n}}\sum_{m=1}^{M_{n}}|\hat{I}_{s}^{(n,m)} - I_{t_{j,n}}^{(n,m)}|^{p}\bigg)^{\frac{1}{p}}
\end{align*}
for any $s\in [t_{j,n},t_{j+1,n}]$, since $\mathcal{W}_{p}(\frac{1}{m}\sum_{i=1}^{m}\delta_{x_{i}},\frac{1}{m}\sum_{i=1}^{m}\delta_{y_{i}})^{p} \leq \frac{1}{m}\sum_{i=1}^{m}|x_{i} - y_{i}|^{p}$ for all $m\in\N$ and $x,y\in\R^{m}$. Consequently, Young's inequality yields that
\begin{align*}
&p\E\big[|\hat{I}_{s}^{(n,l)} - I_{s}^{(\ell)}|^{p-1}\mathcal{W}_{p}(\mathcal{L}(I_{s}^{(\ell)}),L_{j,n})\big]\\
&\leq (3p-2)\E\big[|\hat{I}_{s}^{(n,l)} - I_{s}^{(\ell)}|^{p}\big] + \E\bigg[\mathcal{W}_{p}\bigg(\mathcal{L}(I_{s}^{(1)}),\frac{1}{M_{n}}\sum_{m=1}^{M_{n}}\delta_{I_{s}^{(m)}}\bigg)^{p}\bigg]  + \E\big[|\hat{I}_{s}^{(n,\ell)} - I_{t_{j,n}}^{(n,\ell)}|^{p}\big],
\end{align*}
by using the fact that not only $I_{s}^{(1)} - \hat{I}_{s}^{(n,1)},\dots,I_{s}^{(M_{n})} - \hat{I}_{s}^{(n,M_{n})}$ are identically distributed but also $\hat{I}_{s}^{(n,1)} - I_{t_{j,n}}^{(n,1)},\dots,\hat{I}_{s}^{(n,M_{n})} - I_{t_{j,n}}^{(n,M_{n})}$. Hence, from~\eqref{eq:intermediate error estimate} we obtain that
\begin{align*}
&e^{-\int_{t_{j,n}}^{t}\lambda_{p,j,n}(s) + (3p-2)\hat{\lambda}_{k+1}(s,N(s))\,\mathrm{d}s}\E\big[|\hat{I}_{t}^{(n,l)} - I_{t}^{(\ell)}|^{p}\big] \leq 
\E\big[|I_{t_{j,n}}^{(n,l)} - I_{t_{j,n}}^{(\ell)}|^{p}\big]\\
&  + \int_{t_{j,n}}^{t}
\delta_{p,j,n}(s) +\hat{\lambda}_{k+1}(s,N(s))\big(2c_{p,q}N(s)^{p}M_{n}^{-\frac{1}{2}} + m_{p,j,n}^{p}(s - t_{j,n})^{\frac{p}{2}}\big)\,\mathrm{d}s
\end{align*}
for all $t\in [t_{j,n},t_{j+1,n}]$ and $q > 2p$ by using~\eqref{eq:estimate for the empirical measure} and applying Lemma~\ref{le:EM-estimate}. Consequently, the claimed estimate follows by induction over $j\in\{0,\dots,k_{n} - 1\}$.
\end{proof}

\end{document}